\documentclass{amsart} 


\newtheorem{theorem}{Theorem}[section]
\newtheorem{lemma}[theorem]{Lemma}
\newtheorem{proposition}[theorem]{Proposition}

\theoremstyle{definition}
\newtheorem{definition}[theorem]{Definition}
\newtheorem{example}[theorem]{Example}
\newtheorem{remark}[theorem]{Remark}


\newcommand{\Z}{\mathbb{Z}}
\newcommand{\R}{\mathbb{R}}

\newcommand{\III}{I\!I\!I}


\usepackage{longtable,caption} 
\usepackage{graphicx}

\captionsetup[table]{format=plain, labelformat=simple, labelsep=period, font={sc, normalsize}}


\begin{document}


\title[The intersection polynomials of a virtual knot]
{The intersection polynomials of a virtual knot I:
Definitions and calculations}

\author{Ryuji HIGA}
\address{Department of Mathematics, Kobe University, 
Rokkodai-cho 1-1, Nada-ku, Kobe 657-8501, Japan}
\email{higa@math.kobe-u.ac.jp}

\author{Takuji NAKAMURA}
\address{Faculty of Education, 
University of Yamanashi,
Takeda 4-4-37, Kofu, Yamanashi, 400-8510, Japan}
\email{takunakamura@yamanashi.ac.jp}

\author{Yasutaka NAKANISHI}
\address{Department of Mathematics, Kobe University, 
Rokkodai-cho 1-1, Nada-ku, Kobe 657-8501, Japan}
\email{nakanisi@math.kobe-u.ac.jp}

\author{Shin SATOH}
\address{Department of Mathematics, Kobe University, 
Rokkodai-cho 1-1, Nada-ku, Kobe 657-8501, Japan}
\email{shin@math.kobe-u.ac.jp}

\renewcommand{\thefootnote}{\fnsymbol{footnote}}
\footnote[0]{This work was supported by 
JSPS KAKENHI Grant Numbers 
JP20K03621, JP19K03492, and 
JP19K03466.}


\renewcommand{\thefootnote}{\fnsymbol{footnote}}
\footnote[0]{2020 {\it Mathematics Subject Classification}. 
Primary 57K12; Secondary 57K10, 57K14.}


\keywords{Virtual knot, writhe polynomial, intersection polynomial, 
connected sum, flat virtual knot.} 


\maketitle


\begin{abstract} 
We introduce three kinds of invariants of a virtual knot 
called the first, second, and third intersection polynomials. 
The definition is based on the intersection number of 
a pair of curves on a closed surface. 
The calculations of intersection polynomials
are given up to crossing number four. 
We also study several properties of intersection polynomials. 
\end{abstract}


\section{Introduction}\label{sec1} 

In classical knot theory, 
we study a circle embedded in a $3$-dimensional sphere $S^3$ 
under an ambient isotopy. 
It is enough to consider the product 
$S^2\times I$ of a $2$-sphere $S^2$ and an interval $I$ 
instead of $S^3$. 
In this sense, it is natural to study a circle in the product 
$\Sigma_g\times I$ of a closed, connected, oriented surface 
$\Sigma_g$ of genus $g$ and $I$. 
Kauffman \cite{K1} 
leads the unification of such knot theories 
for all genera and introduces virtual knot theory. 
A virtual knot is described by a diagram on $\Sigma_g$ 
for some $g\geq 0$ under the projection 
$\Sigma_g\times I$ onto $\Sigma_g$. 
We are allowed to use three kinds of 
Reidemeister moves for diagrams and (de)stabilizations for surfaces. 
See Figure~\ref{stab2}.

\begin{figure}[htb]
\begin{center}
\includegraphics[bb=0 0 261.05 71.93]{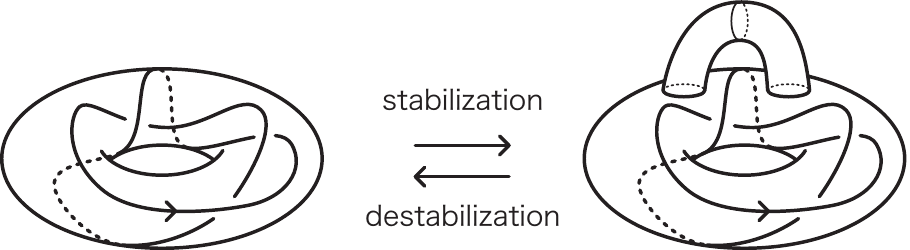}
\caption{}
\label{stab2}
\end{center}
\end{figure}

Some invariants of a virtual knot are natural generalizations of 
those of a classical knot such as knot groups and Jones polynomials \cite{K1}, 
and some vanish for classical knots 
such as  Sawollek polynomials \cite{Saw} and writhe polynomials 
\cite{CG,FK,K3,ST}. 
In this paper, 
we will introduce three kinds of new invariants of the latter type. 

This paper is organized as follows. 
In Section~\ref{sec2}, 
we define three kinds of Laurent polynomials 
$I_K(t)$, $I\!I_K(t)$, and ${\III}_K(t)$ of a virtual knot $K$ 
and prove the following. 
\begin{theorem}
$I_K(t)$, $I\!I_K(t)$, 
and ${\III}_K(t)$ are invariants of $K$. 
\end{theorem}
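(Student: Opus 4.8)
The plan is to verify invariance under each of the Reidemeister moves and under (de)stabilization, by showing that the quantities assembled into $I_K(t)$, $I\!I_K(t)$, and ${\III}_K(t)$ — which are built from intersection numbers of pairs of curves on the underlying surface — are unchanged by each local modification. Since a virtual knot is represented by a diagram on $\Sigma_g$ and two diagrams represent the same virtual knot if and only if they are related by a finite sequence of classical Reidemeister moves R1, R2, R3 together with stabilizations and destabilizations, it suffices to check that the polynomials are well-defined on a fixed surface (invariance under R1--R3) and then that they survive passage between surfaces of different genus (invariance under (de)stabilization).

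Concretely, for each real (classical) crossing $c$ of a diagram $D$ on $\Sigma_g$, smoothing $D$ at $c$ produces a two-component curve, and one records the sign of $c$ together with the mod-something datum coming from the intersection number of the two resulting loops (this is the standard source of the writhe polynomial, and here presumably a refinement thereof). So the first step is to fix notation: for a crossing $c$, let its sign be $\varepsilon(c)$ and let its associated intersection index be the relevant count; the polynomials are then weighted sums $\sum_c \varepsilon(c)\, t^{(\text{index of }c)}$ (with the second and third polynomials encoding higher or iterated intersection data). The second step is the Reidemeister analysis. For R1, the single crossing created or destroyed bounds a disk after smoothing, so one of the two smoothing loops is null-homotopic and its intersection index with the other is $0$; thus it contributes $\varepsilon t^{0}$ on each side, and — as with the writhe polynomial — one normalizes (e.g. by subtracting the total writhe, or by the fact that the net contribution cancels) so that the value is unchanged. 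For R2, the two crossings created or destroyed have opposite signs and, after smoothing, equal intersection indices, so their contributions $\varepsilon t^{k}$ and $-\varepsilon t^{k}$ cancel in pairs. For R3, the three crossings are in bijection before and after the move with matching signs, and one checks that the intersection index of each crossing is preserved because the three smoothing curves change only by an isotopy (a "finger move" of one strand across the triangle), which does not alter homotopy classes and hence does not alter intersection numbers. This is the routine but slightly delicate case-checking core of the proof.

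The third step is invariance under stabilization and destabilization. A stabilization attaches a handle to $\Sigma_g$ in a region disjoint from the diagram (or along a trivial arc), producing $\Sigma_{g+1}$; the diagram, its crossings, and their signs are literally unchanged, so one only needs that the intersection indices computed on $\Sigma_{g+1}$ agree with those computed on $\Sigma_g$. This holds because each pair of smoothing curves lives (up to homotopy) in the complement of the attaching region, and adding a handle there does not change the intersection pairing of classes supported away from it; equivalently the relevant homological or homotopical data is carried by the Carter surface / minimal genus representative and is therefore a virtual-knot invariant. (If the definition in Section~\ref{sec2} is phrased directly in terms of Gauss data or the abstract diagram rather than an embedded surface, this step is automatic and one only needs R1--R3.)

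**The main obstacle** I anticipate is the R3 verification for the second and third polynomials: if ${\III}_K(t)$ (or $I\!I_K(t)$) records intersection numbers between curves obtained by smoothing at \emph{two} distinct crossings simultaneously — i.e. a genuinely second-order quantity — then under R3 one must track how the pair of smoothing loops at crossings $c_i, c_j$ deforms, and show that the finger move preserves not just individual homotopy classes but the intersection number of every such pair, including pairs where one crossing lies in the triangle and one does not. Getting the bookkeeping of signs and indices consistent across the $2^3 = 8$ (or relevant subset of) smoothing configurations on each side of the R3 move, and confirming the cancellations, is where the real work lies; the R1 and R2 cases and the stabilization case should reduce to short observations.
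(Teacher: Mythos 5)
Your overall strategy (check R1, R2, R3, and (de)stabilization separately) is the same as the paper's, and your treatment of R2, R3, and stabilization is essentially right: in the paper the cycles at the surviving crossings are literally unchanged by R2 and R3, the two crossings removed by an R2 move satisfy $\varepsilon_1=-\varepsilon_2$ and $\gamma_1=\gamma_2$ so all of their contributions cancel, and a handle attached away from the diagram does not change any intersection number. But there is a genuine gap in two places. First, the invariants are not single weighted sums $\sum_c\varepsilon(c)\,t^{\mathrm{ind}(c)}$ over crossings; they are built from \emph{double} sums over ordered pairs of crossings, $f_{pq}(D;t)=\sum_{i,j}\varepsilon_i\varepsilon_j(t^{\alpha_i\cdot\beta_j}-1)$ with $\alpha,\beta\in\{\gamma,\overline{\gamma}\}$. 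Second --- and this is where the actual content of the proof lies --- your R1 analysis fails for such sums. The new kink crossing $c_1$ has $\gamma_1=0$ or $\gamma_1=\gamma_D$, so its diagonal term vanishes as you say, but the cross terms $\sum_{j\geq 2}\varepsilon_1\varepsilon_j(t^{\gamma_1\cdot\overline{\gamma}_j}-1)+\sum_{i\geq 2}\varepsilon_i\varepsilon_1(t^{\gamma_i\cdot\overline{\gamma}_1}-1)$ do not vanish: using $\gamma_i\cdot\gamma_D=\gamma_i\cdot\overline{\gamma}_i$ one finds that the R1 defect of $f_{01}$ is exactly $\varepsilon_1W_K(t)$ and that of $f_{00}+f_{11}$ is $\varepsilon_1\overline{W}_K(t)$, where $\overline{W}_K(t)=W_K(t)+W_K(t^{-1})$. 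This is precisely why the invariants are defined as $I_D=f_{01}-\omega_DW_K(t)$ and $I\!I_D=f_{00}+f_{11}-\omega_D\overline{W}_K(t)$; ``subtracting the total writhe'' in your sense cannot cancel a defect that is a full Laurent polynomial rather than a constant.

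Moreover the four kink types do not all produce the same defect for $f_{00}$ alone: it is $0$ for two of them and $\mp\overline{W}_K(t)$ for the other two, so no correction term proportional to $\omega_D$ can repair $f_{00}$, and the third polynomial ${\III}_K(t)$ is only well defined as the equivalence class of $f_{00}(D)$ modulo integer multiples of $\overline{W}_K(t)$ --- it is not an honest Laurent polynomial. Your proposal contains no mechanism for this. Finally, the obstacle you single out (R3 for the ``second-order'' data) is in fact the easy case, since the cycles $\gamma_i$, and hence every pairwise intersection number, are preserved verbatim under R3; the crux of the whole proof is R1.
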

{\noindent}These invariants are called the 
{\it first}, {\it second}, and {\it third intersection polynomials} of $K$, 
respectively. 
We remark that they are trivial for classical knots (Lemma~\ref{lem211}).

In Section~\ref{sec3}, 
we explain how to calculate the intersection polynomials. 
We give the intersection polynomials 
for all the virtual knots up to crossing number four 
in Appendices \ref{appA} and \ref{appB} (Theorem~\ref{thm35}). 
By observing these calculations, 
we have the following. 
\begin{theorem}
$I_K(t)$, $I\!I_K(t)$, 
and ${\III}_K(t)$ are independent of each other. 
\end{theorem}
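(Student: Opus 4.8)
The plan is to establish independence by exhibiting explicit virtual knots taken from the census computed in Section~\ref{sec3}. Here ``independent of each other'' should be read as: none of the three polynomials is a function of the other two. Equivalently, for each distinguished invariant among $I_K(t)$, $I\!I_K(t)$, and ${\III}_K(t)$, there is a pair of virtual knots $K$, $K'$ on which the remaining two invariants agree while the distinguished one differs. Thus the proof reduces to producing three such pairs, one for each invariant, and any single pair of this kind already refutes a functional relation expressing the distinguished polynomial in terms of the other two.

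First I would scan the tables in Appendices~\ref{appA} and \ref{appB}, which by Theorem~\ref{thm35} record all three intersection polynomials for every virtual knot of crossing number at most four. For the independence of $I_K(t)$, I would search for entries $K$, $K'$ with ${\III}_K(t)={\III}_{K'}(t)$ and $I\!I_K(t)=I\!I_{K'}(t)$ but $I_K(t)\ne I_{K'}(t)$; for the independence of $I\!I_K(t)$, a pair with equal $I$ and $\III$ but different $I\!I$; and for the independence of ${\III}_K(t)$, a pair with equal $I$ and $I\!I$ but different $\III$. Since these three conditions are mutually incompatible on a single pair, three distinct witnessing pairs are genuinely needed, and I would present the result as a short list of three such pairs together with the values of all three polynomials on each, arranged so that the two agreeing invariants and the one disagreeing invariant can be read off immediately. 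Correctness is then just a matter of comparing entries already tabulated in the appendices.

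The mathematical content is light; the only real work is the bookkeeping, so the main obstacle is organizing the search through the finite but sizable table so that the three witnessing pairs are located and their relevant coincidences and discrepancies are transparent. I would also note in passing that this argument establishes the strongest natural form of independence — no one of the three is determined by the other two — and in particular that each of the three intersection polynomials carries information not contained in the pair of the remaining ones.
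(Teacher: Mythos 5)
Your proposal takes essentially the same route as the paper: Proposition~\ref{prop36} proves the theorem by exhibiting, from the tables in Appendices~\ref{appA} and \ref{appB}, explicit pairs on which two invariants agree while the third differs (namely $O$ and $4.16$ for $I_K(t)$, $3.2$ and $4.33$ for $I\!I_K(t)$, and $O$ and $4.13$ for ${\III}_K(t)$), with the writhe polynomial $W_K(t)$ included as a fourth invariant in the comparison. The only step you leave implicit is the actual table lookup, and the paper confirms that the required witnessing pairs exist.
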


Sections~\ref{sec4} and \ref{sec5} 
are devoted to giving several applications of the intersection polynomials. 
In Section~\ref{sec4}, 
we study the behaviors of intersection polynomials 
on symmetry of a virtual knot. 
Let $-K$, $K^{\#}$, and 
$K^*$ be the reverse, the vertical mirror image, 
and the horizontal mirror image of $K$, respectively. 
There are known several examples of virtual knots $K$ 
such that the eight knots derived from $K$ are mutually distinct (cf.~\cite{KS}). 
In this paper, we prove the following. 
\begin{theorem}
There are infinitely many virtual knot $K$ such that 
$$K,-K,K^\#,K^*, 
-K^\#, -K^*, K^{\#*}, 
\mbox{ and } -K^{\#*}$$ 
are mutually distinct. 
\end{theorem}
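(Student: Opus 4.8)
I would treat the triple $\iota(K)=\bigl(I_K(t),\,I\!I_K(t),\,{\III}_K(t)\bigr)$ as a single invariant and separate the eight knots by recording how $\iota$ transforms under the three symmetry operations. First I would establish, for each of $-$, $\#$, $*$, a formula expressing the intersection polynomials of the image in terms of those of $K$; from the definitions each should reduce to a combination of the substitution $t\mapsto t^{-1}$, sign changes, and possibly a permutation of the three polynomials (reversing orientation swaps the roles of the two smoothed curves at each crossing and so should invert $t$, while mirroring reverses the orientation of the ambient surface and flips the crossing signs, hence the signs of the contributions). Writing $G$ for the order-eight group generated by $-,\#,*$, so that $G\cong(\Z/2\Z)^3$ and the eight knots in the statement form the $G$-orbit of $K$, these formulas yield a $\Z$-linear action $\rho$ of $G$ on $\bigl(\Z[t,t^{-1}]\bigr)^3$ with $\iota(gK)=\rho(g)\iota(K)$ for all $g\in G$.

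Granting this, the eight knots are mutually distinct whenever the stabilizer of $\iota(K)$ in $\rho(G)$ is trivial: indeed $\iota(gK)=\iota(hK)$ is equivalent to $\rho(h^{-1}g)\iota(K)=\iota(K)$. This is a system of seven inequalities, one for each nontrivial element of $G$, together with the single finite check that $\rho$ is injective (so that no nontrivial $g$ acts as the identity on all of $\bigl(\Z[t,t^{-1}]\bigr)^3$). Unwinding a given inequality, $\rho(g)\iota(K)=\iota(K)$ would force each coordinate of $\iota(K)$ to be fixed by the corresponding operation, i.e.\ to satisfy one of $P(t)=P(t^{-1})$, $P(t)=-P(t^{-1})$, $P(t)=-P(t)$, or a vacuous condition; so the inequality holds as soon as at least one of $I_K,I\!I_K,{\III}_K$ fails the relevant palindromy, anti-palindromy, or vanishing condition. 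It therefore suffices to produce virtual knots whose three intersection polynomials are jointly ``generic'' enough to break all seven symmetries simultaneously. This is plausible given the independence of $I_K,I\!I_K,{\III}_K$ established above, and at least one such knot is exhibited by the calculations of Section~\ref{sec3} (consistent with the known examples in~\cite{KS}).

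To upgrade one example to infinitely many, I would fix such a base knot $K_0$ and pass to a family $\{K_n\}_{n\ge1}$ over which $\iota$ is controlled. The cleanest choice is iterated connected sum $K_n=K_0^{\#n}$: if the intersection polynomials are additive under connected sum --- a behavior I would verify directly from the definitions, as is typical for writhe-type invariants --- then $\iota(K_n)=n\,\iota(K_0)$, so the $K_n$ are pairwise inequivalent because some coordinate of $\iota(K_0)$ is nonzero, while $-,\#,*$ commute with the summation and give $\rho(g)\iota(K_n)=n\,\rho(g)\iota(K_0)$. Since $\rho$ is $\Z$-linear and the module is torsion free, triviality of the stabilizer of $\iota(K_0)$ forces triviality of the stabilizer of $n\,\iota(K_0)$ for every $n\ge1$; hence each $K_n$ has its eight variants mutually distinct, and the theorem follows. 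If additivity fails or connected sum is inconvenient here, an alternative is an explicit diagrammatic family --- for instance one obtained by inserting $n$ copies of a fixed tangle into a diagram of $K_0$ --- whose intersection polynomials are computed by the method of Section~\ref{sec3} and shown to have coefficients that grow with $n$ while retaining all seven genericity properties.

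The main obstacle is the simultaneous bookkeeping. For a particular nontrivial $g$ it may happen that $I_K$ and $I\!I_K$ are both fixed by $\rho(g)$ and only ${\III}_K$ separates, or some other distribution of the work among the three polynomials occurs; one must therefore track, uniformly in $n$, exactly which polynomial --- and which of its coefficients --- breaks each of the seven symmetries, and rule out any accidental coincidence among the eight triples. A secondary point, in the connected-sum approach, is to confirm that $-,\#,*$ interact with the (choice-dependent) virtual connected sum in the expected way. Finally, should the three intersection polynomials fail to detect some nontrivial $g$ at all, the remedy is to adjoin a further coordinate such as the writhe polynomial to $\iota$, after which the same linearity argument applies unchanged.
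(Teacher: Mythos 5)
Your overall strategy---derive transformation formulas for the invariants under $-$, $\#$, $*$, reduce distinctness of the eight knots to triviality of a stabilizer, and then exhibit an explicit family meeting the resulting genericity conditions---is the same as the paper's (Lemma~\ref{lem52}, Proposition~\ref{prop53}, and the family $K_n$ in the proof of Theorem~\ref{thm54}). However, the main line of your argument, as written, cannot succeed: by Lemma~\ref{lem52} the operations $-$ and $\#$ act \emph{identically} on all three intersection polynomials, namely $I_{-K}(t)=I_{K^\#}(t)=I_K(t^{-1})$, $I\!I_{-K}(t)=I\!I_{K^\#}(t)=I\!I_K(t)$, and $I\!I\!I_{-K}(t)=I\!I\!I_{K^\#}(t)=I\!I_K(t)-I\!I\!I_K(t)$. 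Hence the composite $-\#$ lies in the kernel of your action $\rho$, and the stabilizer of $\iota(K)=(I_K,I\!I_K,I\!I\!I_K)$ is \emph{never} trivial: $K$ and $-K^\#$ (likewise $-K$ and $K^\#$, $K^*$ and $-K^{\#*}$, etc.) always share the same triple. The contingency you mention only at the very end---adjoining the writhe polynomial---is therefore not a fallback but an indispensable ingredient: $W_{-K}(t)=W_K(t^{-1})$ while $W_{K^\#}(t)=-W_K(t^{-1})$, and Proposition~\ref{prop53} uses exactly the conditions $W_K(t)\neq W_K(t^{-1})$, and ($W_K(t)\neq -W_K(t^{-1})$ or $I_K(t)\neq I_K(t^{-1})$), together with $2I\!I\!I_K(t)\not\equiv I\!I_K(t)$ $({\rm mod}~\overline{W}_K(t))$, to break all seven symmetries. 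Note also that the action on $I\!I\!I$ is only defined modulo $\overline{W}_K(t)$ and is not a substitution, sign change, or permutation but the coordinate-mixing map $I\!I\!I\mapsto I\!I-I\!I\!I$, so the ``palindromy'' conditions you extract must be formulated accordingly.

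Second, your preferred route to infinitely many examples---iterated connected sums with assumed additivity of $\iota$---is unreliable here: connected sum of virtual knots is not well defined, and the intersection polynomials are double sums over ordered pairs of crossings, so cross terms between the two summands arise and additivity should not be expected (the paper explicitly defers the connected-sum behavior to a separate paper). The paper instead does what you offer as your alternative: it writes down an explicit Gauss-diagram family $K_n$, computes $W_{K_n}$, $I_{K_n}$, $I\!I_{K_n}$, and $I\!I\!I_{K_n}$ in closed form, verifies the three conditions of Proposition~\ref{prop53} for every $n$, and separates the $K_n$ from one another by ${\rm deg}\,W_{K_n}(t)=n+1$. With the writhe polynomial adjoined and an explicit family substituted for the connected-sum construction, your outline becomes essentially the paper's proof.
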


{\noindent}In Section~\ref{sec5}, 
we give lower bounds of the crossing number ${\rm c}(K)$ 
and the virtual crossing number ${\rm vc}(K)$ of a virtual knot $K$ 
by intersection polynomials. 
\begin{theorem}
For any virtual knot $K$, we have
\begin{itemize}
\item[{\rm (i)}] 
${\rm c}(K)\geq {\rm deg}X_K(t)+1$ $(X=I,I\! I,{\III})$ \mbox{ and }
 \vspace{1mm}
\item[{\rm (ii)}] 
${\rm vc}(K)\geq {\rm deg}X_K(t)$ $(X=I,I\! I,{\III})$. 
\end{itemize}
\end{theorem}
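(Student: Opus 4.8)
The plan is to derive both inequalities from a single estimate on the exponents that can occur in $X_K(t)$, applied to a diagram realizing ${\rm c}(K)$ or ${\rm vc}(K)$. Recall from Section~\ref{sec2} that, for a diagram $D$ of $K$ placed on a closed surface $\Sigma$ carrying $D$, the polynomial $X_D(t)=X_K(t)$ is assembled from contributions of the classical crossings of $D$: to a classical crossing $c$ one smooths $D$ along the orientation, obtaining an ordered pair of closed curves $\gamma_1^c,\gamma_2^c$ on $\Sigma$, and (up to the normalization fixed in Section~\ref{sec2}) the exponent attached to $c$ is the homological intersection number $n(c)=[\gamma_1^c]\cdot[\gamma_2^c]\in\Z$. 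Hence it is enough to show, for every diagram $D$ of $K$ and every classical crossing $c$ of $D$,
\[
|n(c)|\le {\rm c}(D)-1
\qquad\text{and}\qquad
|n(c)|\le {\rm vc}(D),
\]
since ${\rm deg}\,X_K(t)$ does not exceed the largest absolute value of an exponent occurring in $X_K(t)$, and ${\rm c}(K),{\rm vc}(K)$ are the minima of ${\rm c}(D),{\rm vc}(D)$ over all diagrams of $K$. (That $n(c)=0$ when $K$ is classical, consistent with Lemma~\ref{lem211}, is the genus-$0$ case of the first bound.)

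For the crossing-number bound I would work on the surface $\Sigma$ on which $D$ is realized, where the only transverse double points of the underlying immersed curve are the classical crossings of $D$, the virtual crossings having been absorbed by the embedding. After smoothing at $c$, the curves $\gamma_1^c$ and $\gamma_2^c$ meet only at those classical crossings of $D$ other than $c$ whose two branches lie on distinct components, of which there are at most ${\rm c}(D)-1$. So the geometric intersection number of $\gamma_1^c$ and $\gamma_2^c$ on $\Sigma$ is at most ${\rm c}(D)-1$, whence $|n(c)|\le {\rm c}(D)-1$.

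For the virtual-crossing-number bound I would instead read $n(c)$ off the given planar diagram. The curves $\gamma_1^c\cup\gamma_2^c$ form a $2$-component virtual link diagram $L^c$ whose virtual crossings are among those of $D$, and $n(c)$ equals, up to sign, the difference ${\rm lk}(\gamma_1^c,\gamma_2^c)-{\rm lk}(\gamma_2^c,\gamma_1^c)$ of the two virtual linking numbers of $L^c$. In this alternating difference every classical crossing of $L^c$ cancels, so it reduces to a signed count over the virtual crossings of $L^c$ at which the two components cross; consequently $|n(c)|$ is at most the number of such virtual crossings, and in particular $|n(c)|\le {\rm vc}(D)$.

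The same two estimates then cover the second and third intersection polynomials as well, since their exponents are likewise built from the pair $\gamma_1^c,\gamma_2^c$ and are therefore controlled by the same count of classical and virtual crossings. The steps I expect to be delicate are: confirming that the exponents appearing in the definitions of Section~\ref{sec2} are genuinely the indices $n(c)$ (and not, say, sums over pairs of crossings, which would cost a constant factor and spoil the clean statement); and, for part~(ii), carefully verifying the cancellation of all classical crossings in ${\rm lk}(\gamma_1^c,\gamma_2^c)-{\rm lk}(\gamma_2^c,\gamma_1^c)$, including the contribution of the self-crossings of each of the two components.
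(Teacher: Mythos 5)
There is a genuine gap, and it is exactly at the point you flagged as ``delicate'': the exponents of $I_K(t)$, $I\!I_K(t)$, and ${\III}_K(t)$ are \emph{not} the single-crossing indices $n(c)=\gamma_i\cdot\overline{\gamma}_i$. By the definitions in Section~\ref{sec2}, these polynomials are built from $f_{pq}(D;t)=\sum_{1\leq i,j\leq n}\varepsilon_i\varepsilon_j(t^{\alpha\cdot\beta}-1)$ with $(\alpha,\beta)=(\gamma_i,\overline{\gamma}_j)$, $(\gamma_i,\gamma_j)$, or $(\overline{\gamma}_i,\overline{\gamma}_j)$ ranging over ordered \emph{pairs} of crossings. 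Only the diagonal terms $i=j$ fit your picture of smoothing at one crossing $c$ into a two-component link $L^c$; for $i\neq j$ the cycles $\gamma_i$ and $\overline{\gamma}_j$ are overlapping arcs of the same diagram (they share sub-arcs of $D$ and have corners at two different crossings), so they are not the components of any smoothing. This breaks both of your reductions as written: the count ``$\gamma_1^c$ and $\gamma_2^c$ meet only at classical crossings whose branches lie on distinct components'' no longer makes sense when the two cycles overlap, and the linking-number identity ${\rm lk}(\gamma_1^c,\gamma_2^c)-{\rm lk}(\gamma_2^c,\gamma_1^c)$ used for part~(ii) has no analogue for the pair $(\gamma_i,\overline{\gamma}_j)$ with $i\neq j$.

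The inequalities you want, $|\alpha\cdot\beta|\leq {\rm c}(D)-1$ and $|\alpha\cdot\beta|\leq {\rm vc}(D)$, are nevertheless true for these pair-exponents, but proving them is where the real work lies. For (i) the paper perturbs $\alpha$ and $\beta$ to parallel push-offs and shows (Lemmas~\ref{lem31} and \ref{lem32}) that $\alpha\cdot\beta=S(\alpha,\beta)+\delta$ with $\delta\in\{-1,0,1\}$ coming from the corner configuration of the two chords, while $S(\alpha,\beta)$ is a signed endpoint count over the remaining $c(D)-2$ chords, each contributing at most $1$ in absolute value; together with ${\rm deg}\,W_K(t)\leq {\rm c}(K)-1$ for the correction terms $\omega_DW_K(t)$ and $\omega_D\overline{W}_K(t)$, this gives Proposition~\ref{prop57}. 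For (ii) the paper's argument is closer in spirit to yours but applies to arbitrary cycles $\alpha,\beta$ with corners at real crossings: the total planar intersection number of $\alpha$ and $\beta$ is zero, so the contribution of the real crossings is the negative of that of the virtual crossings, and a case analysis of the four local configurations at a virtual crossing shows each virtual crossing contributes at most $1$ (the case where both short branches lie on both cycles contributes two intersections of opposite sign). Your proposal would become a correct proof only after replacing the single-crossing smoothing framework by this pairwise analysis, which is precisely the content of Lemma~\ref{lem32} and the case analysis in Proposition~\ref{prop59}.
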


In Appendix~\ref{appA}, 
we give the table of $I_K(t)$ and $I\!I_K(t)$ with ${\rm c}(K)\leq 4$. 
In Appendix~\ref{appB}, 
we give the table of ${\III}_K(t)$ with ${\rm c}(K)\leq 4$. 

In forthcoming papers, we will study 
the behavior of intersection polynomials under 
a connected sum \cite{HNNS2}, 
a characterization of intersection polynomials \cite{HNNS3}, and 
a relationship with crossing changes \cite{HNNS4}.


\section{Definitions}\label{sec2}

Let $\Sigma_g$ be a closed, connected, oriented surface 
of genus $g$, 
and $\alpha$ and $\beta$ closed, oriented curves on 
$\Sigma_g$.
We often regard these curves as homology cycles of $H_1(\Sigma_g)$. 
The {\it intersection number} $\alpha\cdot \beta\in{\Z}$ 
is defined to be the homology intersection 
of the ordered pair $(\alpha,\beta)$. 
Geometrically it is calculated as follows. 
By perturbing $\alpha$ and $\beta$ if necessary,  
we may assume that $\alpha\cap \beta$ consists of 
$m$ transverse double points $p_1,\dots,p_m$. 
At a double point $p_k$ $(1\leq k\leq m)$, 
if $\beta$ intersects $\alpha$ from the left or right 
as we walk along $\alpha$, 
we define $e_k=+1$ or $-1$, respectively. 
Then we have $\alpha\cdot \beta=\sum_{k=1}^m e_k$. 
See Figure~\ref{H2}. 
We remark that $\alpha\cdot\beta=-\beta\cdot\alpha$ 
and $\alpha\cdot\alpha=0$ by definition.

\begin{figure}[htb]
\begin{center}
\includegraphics[bb = 0 0 296.43 55.84]{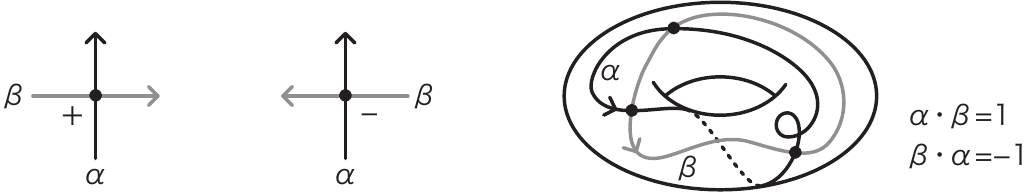}
\caption{}
\label{H2}
\end{center}
\end{figure}

We consider a circle embedded in $\Sigma_g\times[0,1]$ for some $g\geq 0$. 
We identify two embedded circles up to 
ambient isotopies and (de)stabilizations. 
Such an equivalence class is called a {\it virtual knot} 
(cf. \cite{CKS,KK,K1,Kurp}). 

More precisely, 
a virtual knot is described by a {\it diagram} on $\Sigma_g$ 
which is a projection image 
under the projection $\Sigma_g\times[0,1]\rightarrow\Sigma_g$ 
equipped with over/under-information at double points. 
A double point with over/under information 
is called a {\it crossing}. 
Two diagrams $(\Sigma_g,D)$ and 
$(\Sigma_{g'},D')$ present the same virtual knot 
if and only if there is a finite sequence of diagrams  
$$(\Sigma_g,D)=(\Sigma_{g_1},D_1),(\Sigma_{g_2},D_2),\dots,
(\Sigma_{g_s},D_s)=(\Sigma_{g'},D')$$
such that for each $1\leq i\leq s-1$, 
\begin{itemize} 
\item[(0)] 
$g_{i+1}=g_i$ holds and 
$(\Sigma_{g_{i+1}},D_{i+1})$ is obtained from 
$(\Sigma_{g_i},D_i)$ by 
an orientation-preserving homeomorphism 
of $\Sigma_{g_i}=\Sigma_{g_{i+1}}$, 
\item[(i)] 
$g_{i+1}=g_i$ holds and 
$D_{i+1}$ is obtained from $D_i$ by 
a Reidemeister move on $\Sigma_{g_i}=\Sigma_{g_{i+1}}$, or 
\item[(ii)] 
$g_{i+1}=g_i\pm 1$ holds and 
$\Sigma_{g_{i+1}}$ is obtained from $\Sigma_{g_i}$ 
by $1$- or $2$-handle surgery 
missing $D_i=D_{i+1}$. 
Such a deformation is called a {\it stabilization} or 
{\it destabilization}, respectively. 
\end{itemize}
Throughout this paper, 
we assume that all virtual knots are oriented.

Let $D=(\Sigma,D)$ be a diagram of a virtual knot $K$, 
and $c_1,\dots,c_n$ the crossings of $D$. 
We denote by $\gamma_D$ the closed, oriented curve on $\Sigma$ 
obtained from $D$ by ignoring over/under-information at $c_i$'s. 
Furthermore, we denote by $\gamma_i$ $(1\leq i\leq n)$ 
the closed, oriented curve as a part of $\gamma_D$ 
from the overcrossing to the undercrossing at $c_i$, 
and by $\overline{\gamma}_i$ 
the curve from the undercrossing to the overcrossing at $c_i$. 
See Figure~\ref{gamma}. 
These curves satisfy 
$\gamma_i+\overline{\gamma}_i=\gamma_D$ 
and $\gamma_i\cdot\overline{\gamma}_i=
\gamma_i\cdot(\gamma_D-\gamma_i)
=\gamma_i\cdot\gamma_D$ 
as homology cycles on $\Sigma$. 
We call $\gamma_i$ and $\overline{\gamma}_i$ 
the {\it cycles} at $c_i$ on $\Sigma$.

\begin{figure}[htb]
\begin{center}
\includegraphics[bb = 0 0 337.93 34.23]{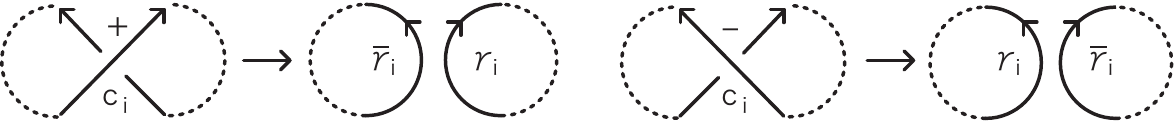}
\caption{}
\label{gamma}
\end{center}
\end{figure}

\begin{definition}[\cite{CG,FK,K3,ST}]\label{def21}
The Laurent polynomial 
$$W_D(t)
=\sum_{i=1}^n \varepsilon_i(t^{\gamma_i\cdot\overline{\gamma}_i}-1)
=\sum_{i=1}^n \varepsilon_i(t^{\gamma_i\cdot\gamma_D}-1)
\in{\Z}[t,t^{-1}]$$
is an invariant of $K$, 
where $\varepsilon_i$ is the sign of $c_i$. 
It is the {\it writhe polynomial} of $K$ and denoted by $W_K(t)$. 
The exponent $\gamma_i\cdot\overline{\gamma}_i$ 
is called the {\it index} of a crossing $c_i$. 
\end{definition}

Let $D^\#$ be the diagram obtained from $D$ by changing 
over/under-information at every crossing of $D$. 

\begin{lemma}[\cite{ST}]\label{lem22}
For any diagram $D$ on $\Sigma$, 
we have 
$W_{D^\#}(t)=-W_D(t^{-1})$. 
\end{lemma}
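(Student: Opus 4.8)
The plan is to prove Lemma~\ref{lem22} by a direct crossing-by-crossing comparison between $D$ and $D^\#$, tracking how the three ingredients of the writhe polynomial --- the sign $\varepsilon_i$, the cycle $\gamma_i$, and the intersection number $\gamma_i\cdot\overline{\gamma}_i$ --- transform under a global switch of over/under information. Since $D$ and $D^\#$ live on the same surface $\Sigma$ and have the same underlying projection, they determine the \emph{same} unoriented curve, and in particular $\gamma_{D^\#}=\gamma_D$ as a homology cycle; the crossings are in canonical bijection $c_i\leftrightarrow c_i^\#$, so the sum defining $W_{D^\#}(t)$ ranges over the same index set.

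The first key step is the sign: switching over/under at a crossing reverses its sign, so $\varepsilon_i^\# = -\varepsilon_i$. The second key step, which is the real content, is to identify the cycle at $c_i^\#$. Recall $\gamma_i$ runs along $\gamma_D$ from the overcrossing to the undercrossing at $c_i$, while $\overline{\gamma}_i$ runs from the undercrossing to the overcrossing. Switching over/under at $c_i$ interchanges these two roles, so $\gamma_{i}^\# = \overline{\gamma}_i$ (and $\overline{\gamma}_i^\# = \gamma_i$) as oriented curves, hence as homology cycles. Consequently the index transforms as
\[
\gamma_i^\#\cdot\overline{\gamma}_i^\#
= \overline{\gamma}_i\cdot\gamma_i
= -\,\gamma_i\cdot\overline{\gamma}_i,
\]
using the antisymmetry of the intersection form recorded in Section~\ref{sec2}.

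Putting these together termwise,
\[
W_{D^\#}(t)
= \sum_{i=1}^n \varepsilon_i^\#\bigl(t^{\gamma_i^\#\cdot\overline{\gamma}_i^\#}-1\bigr)
= \sum_{i=1}^n (-\varepsilon_i)\bigl(t^{-\gamma_i\cdot\overline{\gamma}_i}-1\bigr)
= -\sum_{i=1}^n \varepsilon_i\bigl((t^{-1})^{\gamma_i\cdot\overline{\gamma}_i}-1\bigr)
= -W_D(t^{-1}),
\]
which is the claim. The main obstacle --- really the only subtle point --- is to be careful that interchanging the overcrossing and undercrossing endpoints genuinely exchanges $\gamma_i$ and $\overline{\gamma}_i$ as oriented curves without inadvertently reversing an orientation; one should check this against the local picture in Figure~\ref{gamma}. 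Everything else (the bijection on crossings, the invariance of $\gamma_D$, the sign flip, and the antisymmetry identity) is immediate from the definitions, and the fact that $W_{D^\#}(t)$ is already known to be an invariant via Definition~\ref{def21} means no separate well-definedness argument is needed.
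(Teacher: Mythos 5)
Your argument is correct and is essentially identical to the paper's own proof: both establish the correspondences $\varepsilon_i^\#=-\varepsilon_i$, $\gamma_i^\#=\overline{\gamma}_i$, $\overline{\gamma}_i^\#=\gamma_i$ and then conclude via the antisymmetry $\overline{\gamma}_i\cdot\gamma_i=-\gamma_i\cdot\overline{\gamma}_i$. You merely spell out the final substitution a little more explicitly than the paper does.
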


\begin{proof} 
Let $c_1^\#,\dots,c_n^\#$ be the crossings of $D^\#$ 
such that $c_i^\#$ corresponds to $c_i$, 
$\varepsilon_i^\#$ the sign of $c_i^\#$, 
and $\gamma_i^\#$ the cycle at $c_i^\#$ on $\Sigma$ $(1\leq i\leq n)$. 
Then we have 
$$\varepsilon_i^\#=-\varepsilon_i, \ 
\gamma_i^\#=\overline{\gamma}_i, \mbox{ and }
\overline{\gamma}_i^\#=\gamma_i.$$
Therefore it holds that 
$$W_{D^\#}(t)=\sum_{i=1}^n 
\varepsilon_i^\#(t^{\gamma_i^\#\cdot\overline{\gamma}_i^\#}-1)
=-\sum_{i=1}^n\varepsilon_i(t^{\overline{\gamma}_i\cdot\gamma_i}-1)
=-W_D(t^{-1}).$$
\end{proof}

Now we consider four kinds of Laurent polynomials as follows; 
$$\begin{array}{l} 
\displaystyle{
f_{01}(D;t)=\sum_{1\leq i,j\leq n} \varepsilon_i\varepsilon_j(t^{\gamma_i\cdot\overline{\gamma}_j}-1)}, 
\quad
\displaystyle{
f_{10}(D;t)=\sum_{1\leq i,j\leq n} \varepsilon_i\varepsilon_j(t^{\overline{\gamma}_i\cdot\gamma_j}-1)}, \\
\displaystyle{
f_{00}(D;t)=\sum_{1\leq i,j\leq n} \varepsilon_i\varepsilon_j(t^{\gamma_i\cdot\gamma_j}-1)}, 
\quad
f_{11}(D;t)=\sum_{1\leq i,j\leq n} \varepsilon_i\varepsilon_j(t^{\overline{\gamma}_i\cdot
\overline{\gamma}_j}-1). 
\end{array}$$

\begin{lemma}\label{lem23}
For any diagram $D$ on $\Sigma$, we have the following. 
\begin{itemize}
\item[{\rm (i)}] 
$f_{10}(D;t)=f_{01}(D^\#;t)$. 
\item[{\rm (ii)}] 
$f_{11}(D;t)=f_{00}(D^\#;t)$. 
\end{itemize}
\end{lemma}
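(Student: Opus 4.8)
The plan is to mimic exactly the bookkeeping used in the proof of Lemma~\ref{lem22}, applied now to pairs of crossings rather than single crossings. Given a diagram $D$ on $\Sigma$ with crossings $c_1,\dots,c_n$, form $D^\#$ by switching all crossings, and let $c_i^\#$ be the crossing of $D^\#$ corresponding to $c_i$. The key observations, already recorded in the proof of Lemma~\ref{lem22}, are that $\varepsilon_i^\# = -\varepsilon_i$, $\gamma_i^\# = \overline{\gamma}_i$, and $\overline{\gamma}_i^\# = \gamma_i$ for every $i$. I would simply substitute these three identities into the defining sums for $f_{01}(D^\#;t)$ and $f_{00}(D^\#;t)$.

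For part (i): starting from $f_{01}(D^\#;t) = \sum_{1\leq i,j\leq n} \varepsilon_i^\#\varepsilon_j^\#\bigl(t^{\gamma_i^\#\cdot\overline{\gamma}_j^\#}-1\bigr)$, replace $\varepsilon_i^\#\varepsilon_j^\# = (-\varepsilon_i)(-\varepsilon_j) = \varepsilon_i\varepsilon_j$ and $\gamma_i^\#\cdot\overline{\gamma}_j^\# = \overline{\gamma}_i\cdot\gamma_j$, which turns the sum verbatim into $\sum_{1\leq i,j\leq n}\varepsilon_i\varepsilon_j\bigl(t^{\overline{\gamma}_i\cdot\gamma_j}-1\bigr) = f_{10}(D;t)$. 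For part (ii): the same substitution in $f_{00}(D^\#;t) = \sum \varepsilon_i^\#\varepsilon_j^\#\bigl(t^{\gamma_i^\#\cdot\gamma_j^\#}-1\bigr)$ gives $\gamma_i^\#\cdot\gamma_j^\# = \overline{\gamma}_i\cdot\overline{\gamma}_j$ and the sign product is again $\varepsilon_i\varepsilon_j$, so the sum becomes $f_{11}(D;t)$. Both are one-line computations once the correspondence of signs and cycles is in hand.

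There is essentially no obstacle here; the only point requiring the smallest amount of care is making sure the correspondence $c_i \leftrightarrow c_i^\#$ is used consistently so that the index set $\{1,\dots,n\}$ for the double sum over $D^\#$ is identified with that for $D$ in the obvious way, and that the substitution is applied to \emph{both} factors of each ordered pair $(c_i,c_j)$. Since the two minus signs from $\varepsilon_i^\#$ and $\varepsilon_j^\#$ cancel, no sign change survives in $f$, in contrast to Lemma~\ref{lem22} where a single minus sign remains; this is the conceptual reason the statement has no $t \mapsto t^{-1}$ and no overall sign. I would present the two displayed chains of equalities (one per part) and nothing more, citing the proof of Lemma~\ref{lem22} for the three correspondence identities.
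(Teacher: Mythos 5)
Your proposal is correct and follows the paper's own proof essentially verbatim: both arguments substitute the correspondences $\varepsilon_i^\#=-\varepsilon_i$, $\gamma_i^\#=\overline{\gamma}_i$, $\overline{\gamma}_i^\#=\gamma_i$ into the defining double sums and observe that the two sign changes cancel. Nothing is missing.
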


\begin{proof}
Let $c_1^\#,\dots,c_n^\#$ be the crossings of $D^\#$ 
such that $c_i^\#$ corresponds to $c_i$, 
$\varepsilon_i^\#$ the sign of $c_i^\#$, 
and $\gamma_i^\#$ the cycle at $c_i^\#$ on $\Sigma$ $(1\leq i\leq n)$. 
Then we have 
$$\varepsilon_i^\#=-\varepsilon_i, \ 
\gamma_i^\#=\overline{\gamma}_i, \mbox{ and }
\overline{\gamma}_i^\#=\gamma_i.$$
Since it holds that 
$$\varepsilon_i\varepsilon_j=
\varepsilon_i^\#\varepsilon_j^\#, \ 
\overline{\gamma}_i\cdot\gamma_j=
\gamma_i^\#\cdot\overline{\gamma}_j^\#, \mbox{ and }
\overline{\gamma}_i\cdot\overline{\gamma}_j
=\gamma_i^\#\cdot\gamma_j^\#,$$ 
we have the conclusion by definition. 
\end{proof}

In what follows, 
we often abbreviate $f_{pq}(D;t)$ to $f_{pq}(D)$ 
for $p,q\in\{0,1\}$.

\begin{lemma}\label{lem24}
If a diagram $D'$ is obtained from $D$ 
by a second or third Reidemeister move on $\Sigma$, 
then we have 
$f_{pq}(D)=f_{pq}(D')$ for any $p,q\in\{0,1\}$. 
\end{lemma}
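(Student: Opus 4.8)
The plan is to verify the invariance of each $f_{pq}(D;t)$ under the second and third Reidemeister moves by a local analysis, exactly in the spirit of the classical proof of invariance of the writhe polynomial (Definition~\ref{def21}), but now tracking \emph{pairs} of crossings rather than single ones. Fix $p,q\in\{0,1\}$ and write $\delta_i^{(p)}$ for $\gamma_i$ if $p=0$ and for $\overline{\gamma}_i$ if $p=1$, so that $f_{pq}(D)=\sum_{i,j}\varepsilon_i\varepsilon_j(t^{\delta_i^{(p)}\cdot\delta_j^{(q)}}-1)$. The double sum naturally splits into three kinds of terms: (a) the ``diagonal'' terms with $i=j$; (b) terms where $i,j$ both range over crossings \emph{not} involved in the Reidemeister move; and (c) the ``mixed'' and ``move-internal'' terms where at least one of $i,j$ is a crossing created or destroyed by the move. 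The strategy is to show that after the move, the contributions of type (a) and (b) are literally unchanged, and the type-(c) contributions either vanish individually or cancel in pairs.

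The first and easier reduction concerns the crossings outside the move. For a crossing $c_k$ disjoint from the region where the move is performed, the cycle $\gamma_k$ (equivalently $\overline{\gamma}_k$) is, as a homology class on $\Sigma$, unchanged by a second or third Reidemeister move: an R2 move adds a cancelling pair of arcs to $\gamma_D$ and an R3 move slides a strand across a crossing, and in both cases $\gamma_D$ itself is unchanged in $H_1(\Sigma)$, while the portion of $\gamma_D$ running from the over- to the under-arc at $c_k$ is altered only by a null-homologous correction (one may isotope the move's disk off of the representative cycle). Hence all intersection numbers $\delta_i^{(p)}\cdot\delta_j^{(q)}$ with $i,j\notin\{\text{move crossings}\}$ are preserved, and these terms match termwise.

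The heart of the argument is the type-(c) bookkeeping. For an R2 move, two crossings, say $c_{n-1}$ and $c_n$, appear or disappear, and they have opposite signs $\varepsilon_{n-1}=-\varepsilon_n$; moreover the two arcs between them are parallel, which forces the relevant cycles to coincide as homology classes, e.g.\ $\gamma_{n-1}=\gamma_n$ and $\overline{\gamma}_{n-1}=\overline{\gamma}_n$ (up to the choice of which crossing is ``first'' along $\gamma_D$, which one checks from Figure~\ref{gamma}). Consequently, for each fixed $j$ the pair of terms indexed by $i\in\{n-1,n\}$ has the form $\varepsilon_{n-1}\varepsilon_j(t^{\delta_{n-1}^{(p)}\cdot\delta_j^{(q)}}-1)+\varepsilon_n\varepsilon_j(t^{\delta_n^{(p)}\cdot\delta_j^{(q)}}-1)$, and since $\varepsilon_{n-1}=-\varepsilon_n$ and the two exponents agree, these cancel; the symmetric cancellation handles fixed $i$ with $j\in\{n-1,n\}$, and the four ``internal'' terms $i,j\in\{n-1,n\}$ cancel among themselves for the same reason. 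For an R3 move the crossing set is in bijection before and after (three crossings $c_{n-2},c_{n-1},c_n$ persist with their signs), so one only needs that each cycle $\delta_i^{(p)}$ for $i\in\{n-2,n-1,n\}$ is preserved as a homology class; this follows because the R3 move is an ambient isotopy of the three strands within a disk and does not change which sub-arc of $\gamma_D$ joins the over- and under-points of any of the three crossings, up to a null-homologous modification supported in that disk.

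I expect the R3 case's cycle-tracking to be the main obstacle: one must check, for each of the three crossings and each of the (at most $3\times 3$) exponent slots, that the pair $(\delta_i^{(p)},\delta_j^{(q)})$ of homology classes is the same before and after the move, and the three strands in an R3 picture have genuinely different over/under roles, so this is a small but careful case check — essentially the same check already implicit in the invariance of $W_K(t)$, but applied to ordered pairs. Once that is in hand, the argument assembles as: outside terms match termwise (type (b)), diagonal terms reduce to the $W_K$ computation (type (a)), and the R2-created terms cancel in sign-paired groups (type (c)), giving $f_{pq}(D)=f_{pq}(D')$ for all $p,q\in\{0,1\}$.
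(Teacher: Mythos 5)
Your proof is correct and follows essentially the same route as the paper: for R2 the cancellation via $\varepsilon_1=-\varepsilon_2$ and $\gamma_1=\gamma_2$, and for R3 the preservation of signs and of each cycle's homology class (the move being supported in a disk). The only cosmetic difference is that the paper first uses Lemma~\ref{lem23} to reduce to the cases $f_{01}$ and $f_{00}$, whereas you treat all four $f_{pq}$ uniformly with the $\delta_i^{(p)}$ notation; both amount to the same bookkeeping.
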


\begin{proof}
Since $D'^\#$ is obtained from $D^\#$ 
by a second or third Reidemeister move on $\Sigma$, 
it is sufficient to prove the invariance of $f_{01}(D)$ and $f_{00}(D)$ 
by Lemma~\ref{lem23}. 

\underline{A second Reidemeister move.} 
Assume that $D'$ is obtained from $D$ 
by a second Reidemeister move removing a pair of 
crossings $c_1$ and $c_2$ of $D$. 
For $3\leq i\leq n$, 
let $c_i'$ be the crossing of $D'$ corresponding to $c_i$, 
$\varepsilon_i'$ the sign of $c_i'$, and 
$\gamma_i'$ the cycle at $c_i'$ on $\Sigma$. 
Then it holds that 
$$\varepsilon_1=-\varepsilon_2, \ 
\gamma_1=\gamma_2, \mbox{ and }
\varepsilon_i'=\varepsilon_i, \ 
\gamma_i'=\gamma_i \ (3\leq i\leq n).$$
See Figure~\ref{R2}. 
A disk on $\Sigma$ is bounded by 
two arcs on $D$ connecting $c_1$ and $c_2$. 
Since $\gamma_{D'}=\gamma_D$ and 
$\overline{\gamma}_i'=\overline{\gamma}_i$ $(3\leq i\leq n)$, 
we have 
\begin{eqnarray*}
\lefteqn{f_{01}(D)-f_{01}(D')}\\
&=&
\varepsilon_1^2(t^{\gamma_1\cdot\overline{\gamma}_1}-1)
+\varepsilon_1\varepsilon_2(t^{\gamma_1\cdot
\overline{\gamma}_2}-1)
+\varepsilon_2\varepsilon_1(t^{\gamma_2\cdot
\overline{\gamma}_1}-1)
+\varepsilon_2^2(t^{\gamma_2\cdot\overline{\gamma}_2}-1)\\
&&
+\sum_{j=3}^n
\varepsilon_1\varepsilon_{j}(t^{\gamma_1\cdot
\overline{\gamma}_j}-1)
+\sum_{j=3}^n
\varepsilon_2\varepsilon_{j}(t^{\gamma_2\cdot
\overline{\gamma}_j}-1)\\
&&
+\sum_{i=3}^n
\varepsilon_{i}\varepsilon_1(t^{\gamma_{i}\cdot
\overline{\gamma}_1}-1)
+\sum_{i=3}^n
\varepsilon_{i}\varepsilon_2(t^{\gamma_{i}\cdot
\overline{\gamma}_2}-1)\\
&=&
(t^{\gamma_1\cdot\overline{\gamma}_1}-1)
-(t^{\gamma_1\cdot\overline{\gamma}_1}-1)
-(t^{\gamma_1\cdot\overline{\gamma}_1}-1)
+(t^{\gamma_1\cdot\overline{\gamma}_1}-1)=0. 
\end{eqnarray*}
Therefore $f_{01}(D)$ is invariant under a second Reidemeister move. 
On the other hand, 
the invariance of $f_{00}(D)$ is proved by 
$$f_{00}(D)-f_{00}(D')=
(t^{\gamma_1\cdot\gamma_1}-1) 
-(t^{\gamma_1\cdot\gamma_1}-1)
-(t^{\gamma_1\cdot\gamma_1}-1)
+(t^{\gamma_1\cdot\gamma_1}-1)=0. $$

\begin{figure}[htb]
\begin{center}
\includegraphics[bb = 0 0 171.11 77.69]{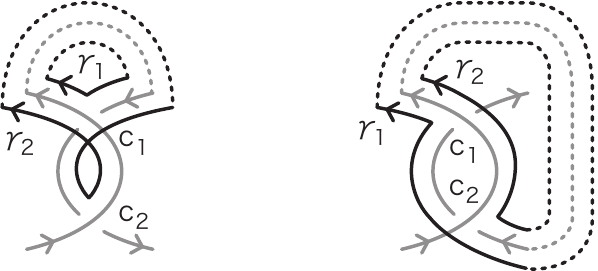}
\caption{}
\label{R2}
\end{center}
\end{figure}

\underline{A third Reidemeister move.} 
Assume that $D'$ is obtained from $D$ 
by a third Reidemeister move involving three crossings 
$c_1,c_2$, and $c_3$ of $D$. 
For $1\leq i\leq n$, 
let $c_i'$ be the crossing corresponding to $c_i$, 
$\varepsilon_i'$ the sign of $c_i'$, and 
$\gamma_i'$ the cycle at $c_i'$ on $\Sigma$. 
Then it holds that 
$$\varepsilon_i'=\varepsilon_i \mbox{ and }
\gamma_i'=\gamma_i \ (1\leq i\leq n).$$
Figure~\ref{R3} shows $\gamma_i'=\gamma_i$ for $i=1,2,3$. 
A disk on $\Sigma$ is bounded by 
three arcs of $D$ connecting $c_1$ and $c_2$, 
$c_1$ and $c_3$, and $c_2$ and $c_3$. 
Therefore both $f_{01}(D)$ and $f_{00}(D)$ are invariant 
under a third Reidemeister move. 
We remark that $\gamma_1+\gamma_2=\gamma_3$ holds 
in this case. 
\end{proof}

\begin{figure}[htb]
\begin{center}
\includegraphics[bb = 0 0 317.96 93.4]{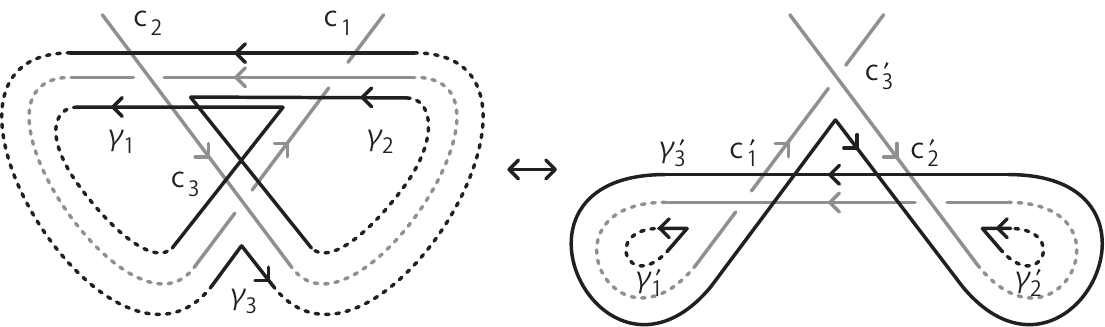}
\caption{}
\label{R3}
\end{center}
\end{figure}

We use the notation 
$\overline{W}_K(t)=\overline{W}_D(t)=W_D(t)+W_D(t^{-1})$. 
It follows by Lemma~\ref{lem22} that 
$$\overline{W}_{D^\#}(t)=
W_{D^\#}(t)+W_{D^\#}(t^{-1})
=-W_D(t^{-1})-W_D(t)=-\overline{W}_D(t).$$

\begin{lemma}\label{lem25}
If a diagram $D'$ is obtained from $D$ 
by a first Reidemeister move on $\Sigma$ 
as shown in {\rm Figure~\ref{R1}(a)--(d)}, 
then the difference $f_{pq}(D)-f_{pq}(D')$ 
is given as shown in {\rm Table~\ref{table1}}. 
\end{lemma}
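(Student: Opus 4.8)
The plan is to isolate the effect of the single crossing created (or destroyed) by the move and to exploit that this crossing is a kink. Assume without loss of generality that $D'$ is obtained from $D$ by a first Reidemeister move that removes one crossing, which I label $c_1$; then $c_2,\dots,c_n$ survive as the crossings of $D'$, with the same signs $\varepsilon_i$ and, as homology classes in $H_1(\Sigma)$, the same cycles $\gamma_i,\overline{\gamma}_i$, and $\gamma_D=\gamma_{D'}$ in $H_1(\Sigma)$. The key geometric remark is that a first Reidemeister move is supported in a disk on $\Sigma$, so the small kink loop is null-homologous; since $\gamma_1+\overline{\gamma}_1=\gamma_D$, exactly one of $\gamma_1,\overline{\gamma}_1$ is $0$ and the other is $\gamma_D$ in $H_1(\Sigma)$. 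Which of the two vanishes, together with the sign $\varepsilon_1\in\{\pm1\}$, is exactly what distinguishes the four pictures in Figure~\ref{R1}. In particular the index $\gamma_1\cdot\overline{\gamma}_1=0$, so $c_1$ contributes $0$ to every writhe-type sum; thus $W_D(t)=W_{D'}(t)=W_K(t)$ and $\sum_{i=2}^n\varepsilon_i(t^{\pm\gamma_i\cdot\overline{\gamma}_i}-1)=W_K(t^{\pm1})$.

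Next I would compute $f_{pq}(D)-f_{pq}(D')$ by collecting the terms of $f_{pq}(D)$ with $i=1$ or $j=1$ (all other terms cancel, since the remaining cycles and $\gamma_D$ are unchanged in $H_1(\Sigma)$). The diagonal term $(1,1)$ always vanishes because the index of $c_1$ is $0$. For the two ``cross sums'' $\varepsilon_1\sum_{j\geq2}\varepsilon_j(t^{?}-1)$ and $\varepsilon_1\sum_{i\geq2}\varepsilon_i(t^{?}-1)$ one substitutes $\gamma_1=0$ or $\overline{\gamma}_1=\gamma_D$ (resp.\ $\overline{\gamma}_1=0$ or $\gamma_1=\gamma_D$) and simplifies the exponents using $\alpha\cdot\beta=-\beta\cdot\alpha$, $\alpha\cdot\alpha=0$, $\gamma_j+\overline{\gamma}_j=\gamma_D$, and $\gamma_j\cdot\overline{\gamma}_j=\gamma_j\cdot\gamma_D$ (so e.g. $\gamma_D\cdot\overline{\gamma}_j=\gamma_j\cdot\overline{\gamma}_j$ and $\gamma_D\cdot\gamma_j=-\gamma_j\cdot\overline{\gamma}_j$). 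For $f_{01}$ one finds that whichever loop type occurs, one cross sum vanishes and the other equals $\varepsilon_1 W_K(t)$, so $f_{01}(D)-f_{01}(D')=\varepsilon_1 W_K(t)$. For $f_{00}$: if $\gamma_1=0$ both cross sums vanish and the difference is $0$; if $\overline{\gamma}_1=0$ the two cross sums combine to $\varepsilon_1\big(W_K(t)+W_K(t^{-1})\big)=\varepsilon_1\overline{W}_K(t)$.

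Finally I would obtain $f_{10}$ and $f_{11}$ from Lemma~\ref{lem23} rather than redoing the computation. Applying $\#$ turns the move $D\to D'$ into a first Reidemeister move $D^\#\to D'^\#$ whose kink $c_1^\#$ has sign $-\varepsilon_1$ and interchanged cycles, hence the opposite loop type; combined with Lemma~\ref{lem22} and $\overline{W}_{D^\#}(t)=-\overline{W}_D(t)$ this gives $f_{10}(D)-f_{10}(D')=f_{01}(D^\#)-f_{01}(D'^\#)=(-\varepsilon_1)(-W_K(t^{-1}))=\varepsilon_1 W_K(t^{-1})$, and $f_{11}(D)-f_{11}(D')=f_{00}(D^\#)-f_{00}(D'^\#)$ equals $\varepsilon_1\overline{W}_K(t)$ when $\gamma_1=0$ and $0$ when $\overline{\gamma}_1=0$. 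Reading off $\varepsilon_1$ and the loop type for each of (a)--(d) then fills in Table~\ref{table1}. The only real obstacle is bookkeeping: correctly matching each picture in Figure~\ref{R1} with its pair $(\varepsilon_1,\text{loop type})$ and tracking the antisymmetry of the intersection form through the exponent simplifications; conceptually the proof rests only on the two facts that the kink loop is null-homologous and that a kink has index $0$.
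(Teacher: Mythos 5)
Your proposal is correct and follows essentially the same route as the paper: isolate the terms of $f_{pq}$ involving the kink crossing $c_1$, use that $\{\gamma_1,\overline{\gamma}_1\}=\{0,\gamma_D\}$ in $H_1(\Sigma)$ together with $\gamma_D\cdot\overline{\gamma}_j=\gamma_j\cdot\overline{\gamma}_j$ and $\gamma_D\cdot\gamma_j=-\gamma_j\cdot\overline{\gamma}_j$ to evaluate the cross sums for $f_{01}$ and $f_{00}$, and then transfer to $f_{10}$ and $f_{11}$ via Lemma~\ref{lem23}, Lemma~\ref{lem22}, and the swap of loop types under $\#$. The case bookkeeping you describe matches Table~\ref{table1} exactly.
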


\begin{figure}[htb]
\begin{center}
\includegraphics[bb = 0 0 300.48 62.09]{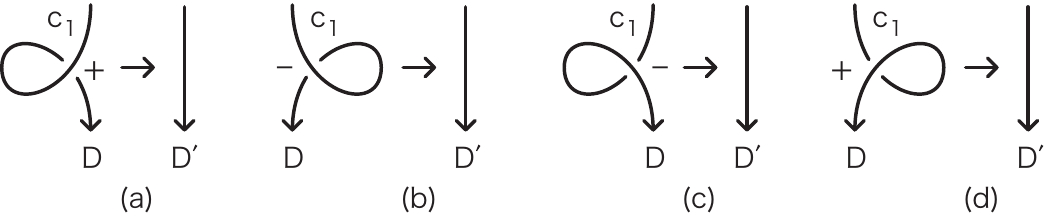}
\caption{}
\label{R1}
\end{center}
\end{figure}

\begin{table}[thb]
\begin{center}
\renewcommand{\arraystretch}{1.3}
\begin{tabular}{c|c|c|c|c}
 & (a) $\varepsilon_1=+1$ & 
 (b) $\varepsilon_1=-1$ & 
 (c) $\varepsilon_1=-1$ & 
 (d) $\varepsilon_1=+1$ \\
\hline\hline
$f_{01}(D)-f_{01}(D')$ & $W_K(t)$ & $-W_K(t)$ & 
$-W_K(t)$ & $W_K(t)$ \\
\hline
$f_{10}(D)-f_{10}(D')$ & $W_K(t^{-1})$ & $-W_K(t^{-1})$ & 
$-W_K(t^{-1})$ & $W_K(t^{-1})$ \\
\hline
$f_{00}(D)-f_{00}(D')$ & $0$ & $0$ & 
$-\overline{W}_K(t)$ & $\overline{W}_K(t)$ \\
\hline
$f_{11}(D)-f_{11}(D')$ & $\overline{W}_K(t)$ & $-\overline{W}_K(t)$ & 
$0$ & $0$
\end{tabular}
\end{center}
\caption{}
\label{table1}
\end{table}

\begin{proof}
Assume that $c_1$ is removed from $D$ by a first Reidemeister move. 
For $2\leq i\leq n$, 
let $c_i'$ be the crossing of $D'$ corresponding to $c_i$, 
$\varepsilon_i'$ the sign of $c_i'$, and 
$\gamma_i'$ the cycle at $c_i'$ on $\Sigma$. 
Then it holds that 
$\gamma_i'=\gamma_i$ and 
$\varepsilon_i'=\varepsilon_i$ $(2\leq i\leq n)$. 
By definition, we have 
$\gamma_1=0$ and $\overline{\gamma}_1=\gamma_{D}$ 
for (a) and (b), and 
$\gamma_1=\gamma_D$ and $\overline{\gamma}_1=0$ 
for (c) and (d).

\underline{$(p,q)=(0,1)$.} 
It holds that 
$$f_{01}(D)-f_{01}(D')=
\varepsilon_1^2(t^{\gamma_1\cdot\overline{\gamma}_1}-1)
+\sum_{j=2}^n
\varepsilon_1\varepsilon_{j}(t^{\gamma_1\cdot
\overline{\gamma}_j}-1)
+\sum_{i=2}^n
\varepsilon_{i}\varepsilon_1(t^{\gamma_{i}\cdot
\overline{\gamma}_1}-1).$$
For (a) and (b), 
we have 
$$f_{01}(D)-f_{01}(D')=
\varepsilon_1\sum_{i=2}^n
\varepsilon_{i}(t^{\gamma_{i}\cdot
\gamma_{D}}-1)
=
\varepsilon_1\sum_{i=2}^n
\varepsilon_{i}(t^{\gamma_{i}\cdot
\overline{\gamma}_i}-1)=\varepsilon_1W_K(t).$$
For (c) and (d), by using the equation 
$\gamma_{D}\cdot\overline{\gamma}_j
=(\gamma_j+\overline{\gamma}_j)\cdot\overline{\gamma}_j
=\gamma_j\cdot\overline{\gamma}_j$. 
we have 
$$f_{01}(D)-f_{01}(D')=
\varepsilon_1\sum_{j=2}^n
\varepsilon_{j}(t^{\gamma_{D}\cdot
\overline{\gamma}_j}-1)=
\varepsilon_1\sum_{j=2}^n\varepsilon_{j}(t^{\gamma_{j}\cdot
\overline{\gamma}_j}-1)=\varepsilon_1W_K(t).$$

\underline{$(p,q)=(1,0)$.} 
If $D'$ is obtained from $D$ 
by (a), (b), (c), or (d), 
then $D'^\#$ is obtained from $D^\#$ 
by (c), (d), (a), or (b), respectively. 
By Lemmas~2.2, 2.3, and the equation for $(p,q)=(0,1)$, it holds that 
$$f_{10}(D)-f_{10}(D')
=f_{01}(D^\#)-f_{01}(D'^\#)
=(-\varepsilon_1)\cdot W_{D^\#}(t)
=\varepsilon_1W_D(t^{-1}).$$

\underline{$(p,q)=(0,0)$.} 
It holds that 
$$f_{00}(D)-f_{00}(D')=
\varepsilon_1^2(t^{\gamma_1\cdot\gamma_1}-1)
+\sum_{j=2}^n
\varepsilon_1\varepsilon_{j}(t^{\gamma_1\cdot
\gamma_j}-1)
+\sum_{i=2}^n
\varepsilon_{i}\varepsilon_1(t^{\gamma_{i}\cdot
\gamma_1}-1). $$
For (a) and (b), we have 
$f_{00}(D)-f_{00}(D')=0$ by $\gamma_1=0$. 
For (c) and (d), 
by using $\gamma_1=\gamma_{D}$, 
it holds that 
\begin{eqnarray*}
f_{00}(D)-f_{00}(D')&=&
\varepsilon_1\sum_{j=2}^n
\varepsilon_{j}(t^{\gamma_{D}\cdot\gamma_j}-1)
+\varepsilon_1\sum_{i=2}^n
\varepsilon_{i}(t^{\gamma_{i}\cdot\gamma_{D}}-1)\\
&=&
\varepsilon_1\sum_{j=2}^n
\varepsilon_{j}(t^{-\gamma_j\cdot\overline{\gamma}_j}-1)
+\varepsilon_1\sum_{i=2}^n
\varepsilon_{i}(t^{\gamma_{i}\cdot\overline{\gamma}_i}-1)\\
&=&
\varepsilon_1W_K(t^{-1})+\varepsilon_1W_K(t)
=\varepsilon_1\overline{W}_K(t).
\end{eqnarray*}

\underline{$(p,q)=(1,1)$.} 
The proof is similar to the case $(p,q)=(1,0)$. 
For (a) and (b), $D'^\#$ is obtained from $D^\#$ by 
(c) and (d), respectively. 
By the equation for $(p,q)=(0,0)$, we have 
$$f_{11}(D)-f_{11}(D')
=f_{00}(D^\#)-f_{00}(D'^\#)
=(-\varepsilon_1)\cdot\overline{W}_{D^\#}(t)
=\varepsilon_1\overline{W}_D(t).$$
For (c) and (d), since $D'^\#$ is obtained from $D^\#$ by 
(a) and (b), respectively, 
it holds that 
$f_{11}(D)-f_{11}(D')
=f_{00}(D^\#)-f_{00}(D'^\#)=0$. 
\end{proof}

The {\it writhe} of a diagram $D$ is the sum of the signs of crossings of $D$, 
and denoted by $\omega_D=\sum_{i=1}^n \varepsilon_i$. 
We consider two kinds of Laurent polynomials 
\begin{eqnarray*}
I_D(t)&=&f_{01}(D;t)-\omega_DW_K(t) \mbox{ and }\\
I\!I_D(t)&=&f_{00}(D;t)+f_{11}(D;t)
-\omega_D\overline{W}_K(t). 
\end{eqnarray*}

\begin{theorem}\label{thm26}
The Laurent polynomials $I_D(t)$ and $I\!I_D(t)\in{\Z}[t,t^{-1}]$ 
do not depend on a particular choice of a diagram $D$ of a virtual knot $K$. 
\end{theorem}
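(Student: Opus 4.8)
The plan is to show that $I_D(t)$ and $I\!I_D(t)$ are unchanged under each of the moves (0), (i), and (ii) relating two diagrams of the same virtual knot, exploiting the computations already assembled in Lemmas~\ref{lem24} and \ref{lem25} together with the known invariance of $W_K(t)$ from Definition~\ref{def21}. First I would dispose of move (0): an orientation-preserving homeomorphism of $\Sigma$ carries crossings to crossings preserving signs and carries the cycles $\gamma_i,\overline{\gamma}_i$ to homologous cycles with the same intersection numbers, so every $f_{pq}(D)$, the writhe $\omega_D$, and $W_K(t)$ are individually preserved; hence so are $I_D$ and $I\!I_D$. For move (ii), a (de)stabilization is performed in a region missing the diagram, so the diagram, its crossings, their signs, and all the relevant intersection numbers are literally unchanged (the new or removed handle contributes nothing to $H_1$ that meets the curves), whence all the constituent quantities and therefore $I_D,I\!I_D$ are unchanged.

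The substantive case is move (i), a Reidemeister move, and here the argument splits by type. For second and third Reidemeister moves, Lemma~\ref{lem24} gives $f_{pq}(D)=f_{pq}(D')$ for all $p,q$; moreover such moves do not change the set of crossing signs, so $\omega_D=\omega_{D'}$, and $W_K(t)$ and $\overline{W}_K(t)$ are invariants, so every term in the definitions of $I_D$ and $I\!I_D$ is individually preserved. For a first Reidemeister move of type (a)--(d), I would combine the $f_{pq}$-differences recorded in Table~\ref{table1} with the change in the writhe: in each of the four cases exactly one crossing $c_1$ is added or removed, so $\omega_D-\omega_{D'}=\varepsilon_1$. For $I_D$, using the first row of Table~\ref{table1} one gets $f_{01}(D)-f_{01}(D')=\varepsilon_1 W_K(t)$ in cases (a),(d) and $=-\varepsilon_1'W_K(t)$-type expressions that, once one reads off the stated sign $\varepsilon_1$ for each column, all reduce to $(\omega_D-\omega_{D'})W_K(t)$; subtracting the matching change $(\omega_D-\omega_{D'})W_K(t)$ in the correction term $\omega_D W_K(t)$ yields $I_D(t)=I_{D'}(t)$. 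For $I\!I_D$, I would add the third and fourth rows of Table~\ref{table1}: in cases (a),(b) the sum $f_{00}+f_{11}$ changes by $\pm\overline{W}_K(t)$ coming entirely from the $f_{11}$ row, and in cases (c),(d) by $\pm\overline{W}_K(t)$ coming entirely from the $f_{00}$ row, and in every case the change equals $(\omega_D-\omega_{D'})\overline{W}_K(t)=\varepsilon_1\overline{W}_K(t)$, which is exactly cancelled by the change in $\omega_D\overline{W}_K(t)$.

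Two small bookkeeping points deserve care and are where I expect the only friction. First, one must check that $W_K(t)$ really is an invariant available for use here; it is, by Definition~\ref{def21}, but logically this means $I_D$ and $I\!I_D$ are being defined as corrections of the already-invariant $f_{pq}$'s by invariant multiples of invariant polynomials, so the whole verification is just confirming that the non-invariant pieces cancel. Second, in the first-Reidemeister cases one must be attentive to the fact that the "extra" crossing $c_1$ also contributes the diagonal term $\varepsilon_1^2(t^{\gamma_1\cdot\overline{\gamma}_1}-1)$ etc., but since $\gamma_1=0$ in (a),(b) and $\overline{\gamma}_1=0$ in (c),(d), and since $\gamma\cdot\gamma=0$ always, these diagonal contributions vanish — this is precisely what makes Table~\ref{table1} come out as stated, and I would simply cite the lemma rather than redo it. Assembling cases (0), (i) (with its sub-cases R1, R2, R3), and (ii) then completes the proof that $I_D(t)$ and $I\!I_D(t)$ depend only on $K$.
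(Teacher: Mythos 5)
Your proof is correct and follows essentially the same route as the paper's: reduce to the three kinds of moves, observe that homeomorphisms and (de)stabilizations preserve all the constituent intersection numbers and signs, invoke Lemma~\ref{lem24} together with $\omega_D=\omega_{D'}$ for the second and third Reidemeister moves, and for the first Reidemeister move combine Table~\ref{table1} with $\omega_D-\omega_{D'}=\varepsilon_1$ to see that the changes $\varepsilon_1W_K(t)$ and $\varepsilon_1\overline{W}_K(t)$ are exactly cancelled by the correction terms. The only difference is that you spell out moves (0) and (ii) a little more explicitly than the paper does.
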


\begin{proof}
Since the intersection numbers among 
$\gamma_i$'s and $\overline{\gamma}_i$'s 
$(1\leq i\leq n)$ do not change by a (de)stabilization, 
it is sufficient to consider the case that 
a diagram $D'$ is obtained from $D$ by 
a Reidemeister move on $\Sigma$.

Assume that $D'$ is obtained from $D$ by 
a second or third Reidemeister move on $\Sigma$. 
Since $\omega_{D'}=\omega_D$ holds, 
we have 
$I_{D'}(t)=I_D(t)$ and 
$I\!I_{D'}(t)=I\!I_D(t)$ by Lemma~\ref{lem24}. 

Assume that $D'$ is obtained from $D$ by 
a first Reidemeister move such that 
a crossing $c_1$ with the sign $\varepsilon_1$ 
is removed from $D$. 
Since it holds that $\omega_D=\omega_{D'}+\varepsilon_1$
and $f_{01}(D)-f_{01}(D')=\varepsilon_1 W_K(t)$ 
by Lemma~\ref{lem25}, 
we have 
\begin{eqnarray*}
I_D(t)&=&
f_{01}(D)-\omega_{D}W_K(t)\\
&=&
f_{01}(D')+\varepsilon_1 W_K(t)-(\omega_{D'}+\varepsilon_1)W_K(t)\\
&=&
f_{01}(D')-\omega_{D'} W_K(t)
=I_{D'}(t).
\end{eqnarray*}
Similarly, since it holds that 
$$\bigl(f_{00}(D)+f_{11}(D)\bigr)
-\bigl(f_{00}(D')+f_{11}(D')\bigr)
=\varepsilon_1\overline{W}_K(t)$$ 
by Lemma~\ref{lem25}, 
we have $I\!I_D(t)=I\!I_{D'}(t)$. 
\end{proof}

\begin{definition}\label{def27} 
The Laurent polynomials $I_D(t)$ and $I\!I_D(t)\in{\Z}[t,t^{-1}]$ 
are called the {\it first} and {\it second intersection polynomials} of a virtual knot $K$, 
and denoted by $I_K(t)$ and $I\!I_K(t)$, 
respectively. 
\end{definition}

\begin{remark}\label{rem28}
We can also consider the polynomial 
$f_{10}(D;t)-\omega_DW_K(t^{-1})$ 
which defines an invariant of $K$ 
by Lemmas~\ref{lem24} and \ref{lem25}. 
However, this is coincident with 
the first intersection polynomial $I_K(t^{-1})$. 
In fact, we have 
\begin{eqnarray*}
f_{10}(D;t)-\omega_DW_K(t^{-1})
&=&
\sum_{1\leq i,j\leq n}
\varepsilon_i(t^{\overline{\gamma}_i\cdot\gamma_j}-1)
-\omega_DW_K(t^{-1})\\
&=&\sum_{1\leq i,j\leq n} 
\varepsilon_i(t^{-\gamma_j\cdot\overline{\gamma}_i}-1)
-\omega_DW_K(t^{-1})
=I_K(t^{-1}).
\end{eqnarray*}
\end{remark}

For a Laurent polynomial $h(t)$, 
we consider an equivalence relation in ${\Z}[t,t^{-1}]$ 
such that $f(t)\equiv g(t)$ (mod~$h(t)$) holds 
if and only if $f(t)-g(t)=mh(t)$ for some $m\in{\Z}$. 
In the case of $h(t)=0$, 
this equivalence relation gives $f(t)=g(t)$ only. 
For a diagram $D$ of a virtual knot $K$ on $\Sigma$, 
we consider the equivalence class 
$f_{00}(D)$ (mod~$\overline{W}_K(t)$). 
The invariance follows by Lemmas~\ref{lem24} and \ref{lem25} 
immediately.

\begin{definition}\label{def29}
The equivalence class $f_{00}(D)$ 
\mbox{(mod~$\overline{W}_K(t)$)} 
is called the {\it third intersection polynomial} of a virtual knot $K$, 
and denoted by $I\!I\!I_K(t)$. 
\end{definition}

\begin{remark}\label{rem210}
We can also consider the equivalence class  
$f_{11}(D)$ (mod~$\overline{W}_K(t)$) 
as an invariant of $K$. 
However, since 
$I\!I_K(t)\equiv f_{00}(D)+f_{11}(D)$ (mod~$\overline{W}_K(t)$) 
holds by definition, 
we have $$f_{11}(D)\equiv 
I\!I_K(t)-I\!I\!I_K(t) \quad 
\mbox{(mod~$\overline{W}_K(t)$)}.$$
\end{remark}

A virtual knot is {\it classical} 
if it is presented by a diagram on $S^2$. 
By definition, 
the writhe polynomial $W_K(t)$ 
vanishes for any classical knot \cite{CG,K3}. 
The intersection polynomials satisfy 
the same property as follows. 

\begin{lemma}\label{lem211}
Any classical knot satisfies 
$$I_K(t)=I\!I_K(t)=I\!I\!I_K(t)=0.$$
\end{lemma}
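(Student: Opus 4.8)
The plan is to exploit the definition of a classical knot: it is presented by a diagram $D=(\Sigma,D)$ with $\Sigma=S^2=\Sigma_0$. Since $I_K(t)$, $I\!I_K(t)$, and ${\III}_K(t)$ are invariants of $K$ by Theorem~\ref{thm26} and Definition~\ref{def29}, it suffices to evaluate all three from this one diagram on $S^2$.

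The key point is purely homological. Every closed curve on $S^2$ is null-homologous, since $H_1(S^2)=0$. Hence all the cycles $\gamma_i,\overline{\gamma}_i$ $(1\leq i\leq n)$ are zero in $H_1(\Sigma)=H_1(S^2)=0$, and therefore every intersection number occurring in the definitions vanishes: $\gamma_i\cdot\overline{\gamma}_j=\gamma_i\cdot\gamma_j=\overline{\gamma}_i\cdot\overline{\gamma}_j=0$ for all $i,j$. Consequently each summand $t^{(\,\cdot\,)}-1$ equals $t^0-1=0$, so $W_K(t)=\sum_{i=1}^n\varepsilon_i(t^{\gamma_i\cdot\overline{\gamma}_i}-1)=0$ (as already recalled before the statement), $\overline{W}_K(t)=W_K(t)+W_K(t^{-1})=0$, and $f_{pq}(D;t)=\sum_{1\leq i,j\leq n}\varepsilon_i\varepsilon_j(t^{(\,\cdot\,)}-1)=0$ for every $p,q\in\{0,1\}$.

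Plugging these values into the definitions finishes the argument: $I_K(t)=f_{01}(D;t)-\omega_DW_K(t)=0-\omega_D\cdot 0=0$, and $I\!I_K(t)=f_{00}(D;t)+f_{11}(D;t)-\omega_D\overline{W}_K(t)=0+0-\omega_D\cdot 0=0$. For the third polynomial, since $\overline{W}_K(t)=0$, the congruence ``$\bmod\ \overline{W}_K(t)$'' reduces to ordinary equality in $\Z[t,t^{-1}]$ (the degenerate case noted just before Definition~\ref{def29}), so ${\III}_K(t)=f_{00}(D;t)=0$. There is no real obstacle here: the only things to be careful about are invoking the invariance so that we may work with an $S^2$-diagram, and remembering that the ``$\bmod\ 0$'' convention collapses ${\III}_K(t)$ to the honest value $f_{00}(D;t)$.
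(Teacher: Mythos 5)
Your proposal is correct and takes essentially the same approach as the paper, whose entire proof is the observation that all intersection numbers between two cycles on $S^2$ vanish. You simply spell out the consequences (all $f_{pq}$ and $W_K$ vanish, and the ``$\bmod\ 0$'' convention collapses $\III_K(t)$ to $f_{00}(D;t)$) in more detail.
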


\begin{proof} 
All intersection numbers between 
two cycles on $S^2$ are zero. 
\end{proof}


\section{Calculations}\label{sec3} 

Let $C$ be a closed, oriented curve 
on $\Sigma$ with a finite number of crossings. 
When we consider $C$ as the image of an immersion 
$S^1\rightarrow\Sigma$, 
the curve $C$ is presented by a {\it Gauss diagram} $G$ 
consisting of the circle $S^1$ equipped with chords 
each of which connects the preimage of a crossing of $C$. 
The endpoints of chords admit 
signs with respect to the orientation of $C$ 
as shown in Figure~\ref{chord}. 

\begin{figure}[htb]
\begin{center}
\includegraphics[bb = 0 0 138 37.64]{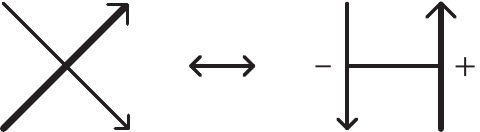}
\caption{}
\label{chord}
\end{center}
\end{figure}

The endpoints of a chord of $G$ 
divide the circle $S^1$ into two arcs. 
Let $\alpha\subset S^1$ be such an arc, 
and $P(\alpha)$ the set of endpoints 
of the chords of $G$ in the interior of $\alpha$. 
For an endpoint $x\in P(\alpha)$, 
we denote by ${\rm sgn}(x)$ the sign of $x$, 
and by $\tau(x)$ 
the other endpoint of the chord incident to $x$. 
The arc $\alpha\subset S^1$ presents 
a cycle on $\Sigma$, 
which is also denoted by $\alpha\subset\Sigma$. 
See Figure~\ref{Gauss1}.

\begin{figure}[htb]
\begin{center}
\includegraphics[bb = 0 0 305.5 81.51]{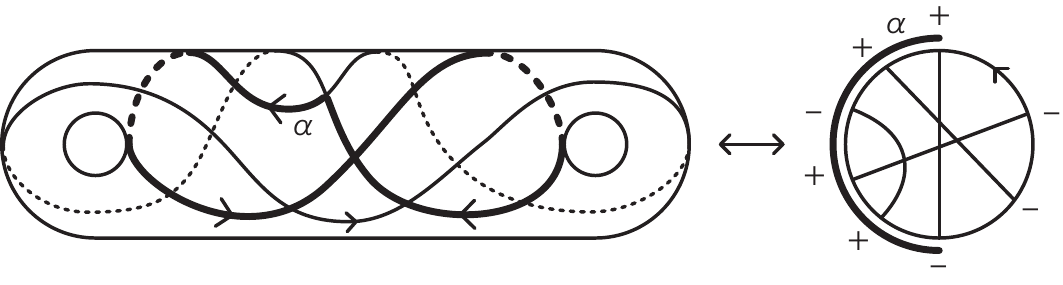}
\caption{}
\label{Gauss1}
\end{center}
\end{figure}

\begin{lemma}\label{lem31}
Let $\overline{\alpha}$ be the complementary arc of $\alpha\subset S^1$. 
Then we have 
$$\alpha\cdot\overline{\alpha}=
\sum_{x\in P(\alpha)} {\rm sgn}(x).$$
\end{lemma}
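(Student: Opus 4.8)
The plan is to compute the homology intersection number $\alpha \cdot \overline{\alpha}$ directly from the Gauss diagram data, using the geometric recipe for intersection numbers recalled earlier (count signed transverse double points as one walks along the first curve). First I would fix a diagram realizing the Gauss diagram $G$ on $\Sigma$, so that the arc $\alpha \subset S^1$ and its complementary arc $\overline{\alpha}$ become honest closed curves on $\Sigma$: each becomes closed because we add the chord of $G$ whose endpoints are the two points separating $\alpha$ from $\overline{\alpha}$ (running along the corresponding strand of the diagram near that crossing, which is a small arc contributing no intersection). The transverse double points of $\alpha \cap \overline{\alpha}$ on $\Sigma$ are then exactly the crossings $c$ of $C$ whose preimages in $S^1$ are split by the endpoints of our distinguished chord — equivalently, the crossings $c$ such that exactly one of the two preimages of $c$ lies in the interior of $\alpha$. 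These are precisely the crossings with one preimage in $P(\alpha)$ and the other in $P(\overline{\alpha})$.

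The key step is then to check that the local sign $e_c$ contributed by such a double point (left/right crossing as in Figure~\ref{H2}) equals $\mathrm{sgn}(x)$, where $x \in P(\alpha)$ is the preimage lying on $\alpha$. This is a local computation at the crossing $c$: the strand through $x$ belongs to $\alpha$ (oriented by the restriction of the orientation of $C$), the strand through $\tau(x)$ belongs to $\overline{\alpha}$, and by the convention fixing chord-endpoint signs in Figure~\ref{chord}, the sign of $x$ records exactly whether $\overline{\alpha}$ crosses $\alpha$ from the left or the right relative to the orientation of $\alpha$ at $c$. Summing $e_c$ over all such double points gives $\alpha \cdot \overline{\alpha} = \sum_{x \in P(\alpha)} \mathrm{sgn}(x)$, which is the claimed formula.

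I expect the main obstacle to be the bookkeeping of orientations and sign conventions: one must be careful that the sign $\mathrm{sgn}(x)$ attached to the endpoint $x$ (which by definition in Figure~\ref{chord} is read off from the orientation of $C$ and the over/under or co-orientation data at the crossing) really matches the left/right convention used to define $e_c$ in the homology intersection pairing, and in particular that choosing $x \in P(\alpha)$ rather than its partner $\tau(x) \in P(\overline{\alpha})$ gives the sign of the ordered pair $(\alpha, \overline{\alpha})$ and not its negative. Once the conventions are aligned at a single crossing, the rest is immediate. I would also note that endpoints $y$ whose partner $\tau(y)$ also lies in $P(\alpha)$ correspond to self-intersections of $\alpha$, which contribute nothing since they are not points of $\alpha \cap \overline{\alpha}$; this is why only the sum over the ``mixed'' chords survives, and that sum can be written as $\sum_{x \in P(\alpha)} \mathrm{sgn}(x)$ precisely because the purely-internal chords contribute endpoints in pairs and — by $\alpha \cdot \alpha = 0$, or more elementarily by the sign conventions — could in principle also be argued away, but in fact the cleanest route is to observe directly that for a chord with both endpoints in $P(\alpha)$ the corresponding crossing is not on $\overline{\alpha}$ at all, so no cancellation argument is needed and the stated sum is literally the set of mixed chords.
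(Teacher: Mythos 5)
Most of your proposal runs parallel to the paper's (very short) proof: you identify the transverse double points of $\alpha\cap\overline{\alpha}$ with the crossings whose two preimages are separated by the distinguished chord, i.e.\ with the ``mixed'' chords having one endpoint in $P(\alpha)$ and the other in $P(\overline{\alpha})$, and you check that the local sign of such an intersection point agrees with ${\rm sgn}(x)$ for the endpoint $x\in P(\alpha)$. That is exactly the content of the paper's identity $\sum_{x\in P(\alpha),\,\tau(x)\in P(\overline{\alpha})}{\rm sgn}(x)=\alpha\cdot\overline{\alpha}$, and your care with the orientation conventions is if anything more explicit than the paper's.

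However, your closing claim --- that ``no cancellation argument is needed and the stated sum is literally the set of mixed chords'' --- is wrong, and it dismisses the one step that the paper's proof actually consists of. The sum $\sum_{x\in P(\alpha)}{\rm sgn}(x)$ ranges over \emph{all} chord endpoints in the interior of $\alpha$; if a chord has both endpoints on $\alpha$, both of those endpoints belong to $P(\alpha)$ and both appear in the sum. The fact that the corresponding crossing is not a point of $\alpha\cap\overline{\alpha}$ removes it from the left-hand side of the identity, but does not remove its two endpoints from the right-hand side. One genuinely needs the cancellation you mention only in passing: by the convention of Figure~\ref{chord} (cf.\ Figure~\ref{H1}), the two endpoints of any chord carry opposite signs, so a chord contained entirely in $\alpha$ contributes $0$ to the sum, and only the mixed chords survive. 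This is precisely the sentence ``any chord whose endpoints both lie on $\alpha$ does not contribute to the sum'' in the paper. With that correction your argument is complete; as written, the final step fails whenever $\alpha$ contains a whole chord.
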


\begin{proof}
Any chord whose endpoints both lie on $\alpha$ 
does not contribute to the sum 
in the right hand side of the equation. 
Therefore it holds that 
$$\sum_{x\in P(\alpha)} {\rm sgn}(x)
=\sum_{x\in P(\alpha),\tau(x)\in P(\overline{\alpha})}
{\rm sgn}(x)=\alpha\cdot\overline{\alpha}. $$
\end{proof}

Let $\alpha$ and $\beta\subset S^1$ 
be arcs for distinct chords $a$ and $b$ of $G$, respectively. 
We consider an integer  
$$S(\alpha,\beta)=\sum_{x\in P(\alpha),\tau(x)\in P(\beta)}{\rm sgn}(x).$$
It follows by definition that 
$S(\alpha,\beta)=-S(\beta,\alpha)$. 
We say that the chords $a$ and $b$ of $G$ are {\it linked} 
if their endpoints appear on $S^1$ alternately, 
and otherwise {\it unlinked}. 
The number $S(\alpha,\beta)$ is equal to 
the sum of the signs of the endpoints indicated 
by dots as shown in Figure~\ref{calculation}.

\begin{figure}[htb]
\begin{center}
\includegraphics[bb = 0 0 314.59 69.86]{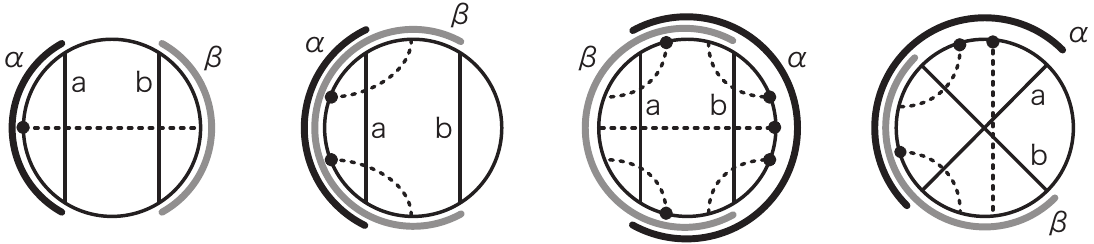}
\caption{}
\label{calculation}
\end{center}
\end{figure}

Then the intersection number $\alpha\cdot\beta$ 
of the cycles $\alpha$ and $\beta\subset\Sigma$ 
is calculated as follows.

\begin{lemma}\label{lem32}
{\rm (i)} 
If $a$ and $b$ are unlinked, 
then $\alpha\cdot \beta=S(\alpha,\beta)$. 

\begin{itemize}
\item[{\rm (ii)}] 
Assume that $a$ and $b$ are linked as shown in {\rm Figure~\ref{lem3-1}}, 
where $\varepsilon,\delta\in\{\pm\}$. 
Then it holds that 
$$\alpha\cdot\beta=S(\alpha,\beta)+\frac{1}{2}(\varepsilon+\delta) \mbox{ and }
\beta\cdot\alpha=S(\beta,\alpha)-\frac{1}{2}(\varepsilon+\delta).$$
\end{itemize}
\end{lemma}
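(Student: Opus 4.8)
The plan is to reduce the computation of $\alpha\cdot\beta$ to Lemma~\ref{lem31} by a careful bookkeeping of which chord endpoints contribute. Recall that a cycle on $\Sigma$ presented by an arc $\alpha\subset S^1$ is, as a homology class, the portion of $\gamma_D$ traced out along $\alpha$ together with the "short-cut" through the crossing corresponding to chord $a$; so intersections of the cycles $\alpha$ and $\beta$ in $\Sigma$ come in two types: (1) transverse intersections coming from a chord endpoint $x\in P(\alpha)$ with $\tau(x)\in P(\beta)$ (a crossing of $C$ where one strand belongs to the $\alpha$-portion and the other to the $\beta$-portion), and (2) a possible intersection at the crossings $a$ and $b$ themselves, arising from the short-cut arcs used to close up $\alpha$ and $\beta$. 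Type~(1) contributes exactly $S(\alpha,\beta)$; the whole content of the lemma is computing the type~(2) contribution.

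First I would treat case (i). If $a$ and $b$ are unlinked, their short-cut arcs can be pushed off each other so that the closed-up cycles $\alpha,\beta$ meet only in type~(1) intersections; hence $\alpha\cdot\beta=S(\alpha,\beta)$. Making this precise amounts to drawing the two unlinked chord configurations, choosing the closures, and checking the short-cuts are disjoint — a short figure-based argument mirroring the picture in Figure~\ref{calculation}.

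For case (ii), the linked configuration, I would fix the cyclic order of endpoints on $S^1$ as in Figure~\ref{lem3-1} and compute the type~(2) contribution directly. When $a$ and $b$ are linked, the closing short-cuts for $\alpha$ and $\beta$ necessarily cross near the two crossings of $C$ corresponding to $a$ and $b$; each such crossing contributes $\pm\frac12(\text{something})$ depending on the signs $\varepsilon,\delta$ of the endpoints and on the orientations, and summing the two gives the term $\frac12(\varepsilon+\delta)$. The antisymmetry $\alpha\cdot\beta=-\beta\cdot\alpha$ together with $S(\beta,\alpha)=-S(\alpha,\beta)$ then forces the second formula, so it suffices to verify the first. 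The cleanest route is to compare with Lemma~\ref{lem31}: apply that lemma to the arc $\alpha$ and also to the union of $\beta$ with the bits of $S^1$ not already in $\alpha$, and subtract, so that the self-intersection-type normalizations cancel and only the correction at $a,b$ survives. Concretely, one checks $\alpha\cdot\beta = \alpha\cdot(\gamma_C-\alpha') $ for an appropriate cycle $\alpha'$ and expands using the additivity of intersection numbers, then reads off the half-integer correction from the local picture at the linked pair.

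I expect the main obstacle to be the sign bookkeeping in case (ii): pinning down exactly why the correction is $+\frac12(\varepsilon+\delta)$ rather than, say, $\pm\frac12(\varepsilon-\delta)$ or a different overall sign requires committing to a precise local model at the two crossings associated with the linked chords and tracking how the orientations of the short-cut arcs interact with the signs ${\rm sgn}(x)$ of the endpoints. Once the single linked configuration of Figure~\ref{lem3-1} is analyzed, the remaining linked configurations should follow by symmetry (reversing orientations flips signs consistently) or by the same local computation, and the unlinked case is comparatively routine.
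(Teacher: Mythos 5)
Your proposal follows essentially the same route as the paper: perturb $\alpha$ and $\beta$ to transverse position, identify the generic intersection points with the endpoints counted by $S(\alpha,\beta)$, and compute the leftover local contribution at the linked pair (with the unlinked case having no such contribution). The sign bookkeeping you flag as the main obstacle is resolved in the paper by an explicit check of the cases $(\varepsilon,\delta)=(+,+)$ and $(+,-)$ using left/right parallel copies of $\alpha$ and $\beta$, under which the entire correction appears as a single exceptional intersection point near the crossing $b$ (present with sign $+$ in the first case, absent in the second) rather than as half-contributions at each of the two crossings.
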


\begin{figure}[htb]
\begin{center}
\includegraphics[bb = 0 0 70.45 83.51]{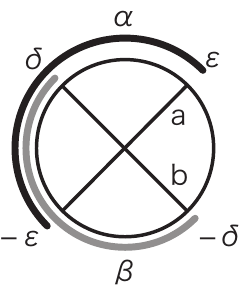}
\caption{}
\label{lem3-1}
\end{center}
\end{figure}
 
\begin{proof}
We prove two cases 
$(\varepsilon,\delta)=(+,+)$ and $(+,-)$ in (ii) 
as shown in Figure~\ref{lem3-1proof}. 
Other cases are similarly proved. 

\underline{$(\varepsilon,\delta)=(+,+)$.} 
We take a parallel copy of the curve $\alpha\subset\Sigma$ 
(or $\beta$) 
which lies on the left (or right) side of the original curve. 
See the left of Figure~\ref{lem3-1proof}. 
Then the intersections between $\alpha$ and $\beta$ 
except one point near the crossing $b$ 
correspond to the endpoints $x\in P(\alpha)$ 
with $\tau(x)\in P(\beta)$. 
Since the sign of the exceptional intersection is equal to $+$, 
we obtain 
$\alpha\cdot\beta=S(\alpha,\beta)+1$. 

\underline{$(\varepsilon,\delta)=(+,-)$.} 
Similarly to the above case, 
we consider parallel copies of $\alpha$ and $\beta$. 
In this case, there is no exceptional intersection near $b$. 
See the right of Figure~\ref{lem3-1proof}. 
Therefore we have $\alpha\cdot\beta=S(\alpha,\beta)$. 
\end{proof}

\begin{figure}[htb]
\begin{center}
\includegraphics[bb = 0 0 315.24 92.21]{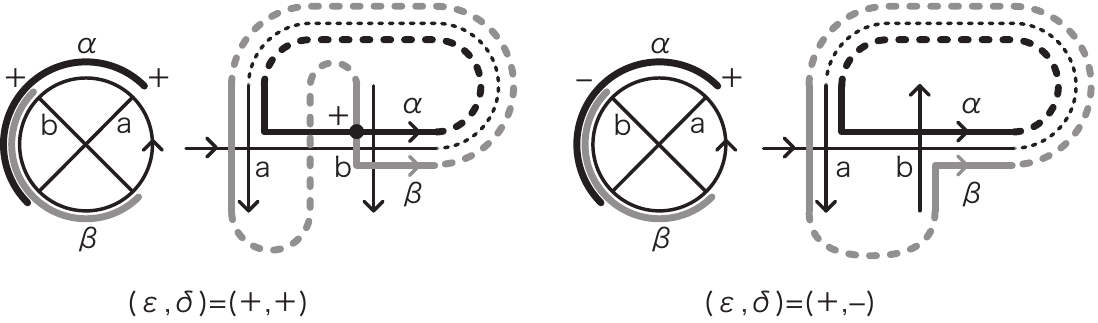}
\caption{}
\label{lem3-1proof}
\end{center}
\end{figure}

\begin{example}\label{ex33}
We consider three arcs $\alpha$, $\beta$, and $\gamma$
of the Gauss diagram as shown in Figure~\ref{Eg32}.
We have
$$S(\alpha,\beta)=-2, \ S(\alpha,\gamma)=1, \mbox{ and } S(\beta,\gamma)=0.$$
Since $\alpha$ and $\beta$ are unlinked, 
it holds that 
$\alpha\cdot\beta=S(\alpha,\beta)=-2$ 
by Lemma~\ref{lem32}(i). 
On the other hand, 
since $\alpha$ and $\gamma$, $\beta$ and $\gamma$ 
are linked, respectively, 
it holds that 
$\alpha\cdot\gamma=S(\alpha,\gamma)=1$ 
and 
$\beta\cdot\gamma=S(\beta,\gamma)+1=1$ 
by Lemma~\ref{lem32}(ii). 
\hfill$\Box$
\end{example}

\begin{center}
\begin{figure}[h]
\includegraphics[bb = 0 0 98.33 104.95]{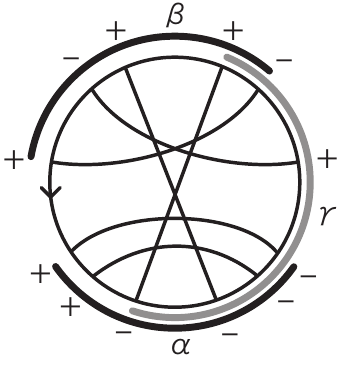}
\caption{}
\label{Eg32}
\end{figure}
\end{center}

Let $D\subset \Sigma$ be  
a diagram of a virtual knot $K$ 
with $n$ crossings $c_1,\dots,c_n$, 
and $G$ the Gauss diagram of $D$. 
We also denote by $c_i$ the chord of $G$ corresponding to 
a crossing $c_i$ of $D$. 
Each chord of $G$ is oriented 
from the over-crossing to the under-crossing, 
and equipped with the same sign as that of the corresponding crossing of $D$. 
Then we see that if the sign of a chord is equal to $\varepsilon$, 
then the initial and terminal endpoints of the chord 
have the sign $-\varepsilon$ and $\varepsilon$, respectively. 
See Figure~\ref{H1}.

\begin{figure}[htb]
\begin{center}
\includegraphics[bb = 0 0 162.59 47]{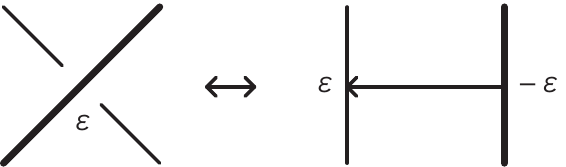}
\caption{}
\label{H1}
\end{center}
\end{figure}

The endpoints of an oriented chord $c_i$ of $G$ 
divide the circle $S^1$ into two arcs. 
The arc from the initial endpoint to the terminal 
corresponds to the cycle $\gamma_i\subset \Sigma$ 
at the crossing $c_i$, 
and the other arc corresponds to $\overline{\gamma}_i$. 
Therefore we see that 
$$\gamma_i\cdot\overline{\gamma}_i
=\gamma_i\cdot\gamma_D
=\sum_{x\in P(\gamma_i)}{\rm sgn}(x).$$ 
If two chords $c_i$ and $c_j$ are unlinked, 
then it follows by Lemma~\ref{lem32}(i) that 
$$\gamma_i\cdot\overline{\gamma}_j=S(\gamma_i,\overline{\gamma}_j), \ \gamma_i\cdot\gamma_j=S(\gamma_i,\gamma_j), \mbox{ and }
\overline{\gamma}_i\cdot\overline{\gamma}_j=
S(\overline{\gamma}_i\cdot\overline{\gamma}_j).$$

We have similar equations 
in the case that $c_i$ and $c_j$ are linked 
by Lemma~\ref{lem32}(ii).
For example, 
we consider the case as shown in Figure~\ref{linked}. 
Then we have 
\begin{eqnarray*}
\gamma_i\cdot\overline{\gamma}_j&=&
S(\gamma_i,\overline{\gamma}_j)+
(\varepsilon_i-\varepsilon_j)/2, \\
\gamma_i\cdot\gamma_j&=&
S(\gamma_i,\gamma_j)-
(\varepsilon_i+\varepsilon_j)/2, \mbox{ and }\\
\overline{\gamma}_i\cdot\overline{\gamma}_j&=&
S(\overline{\gamma}_i,\overline{\gamma}_j)
+(\varepsilon_i+\varepsilon_j)/2.
\end{eqnarray*}

\begin{figure}[htb]
\begin{center}
\includegraphics[bb = 0 0 296.07 88.59]{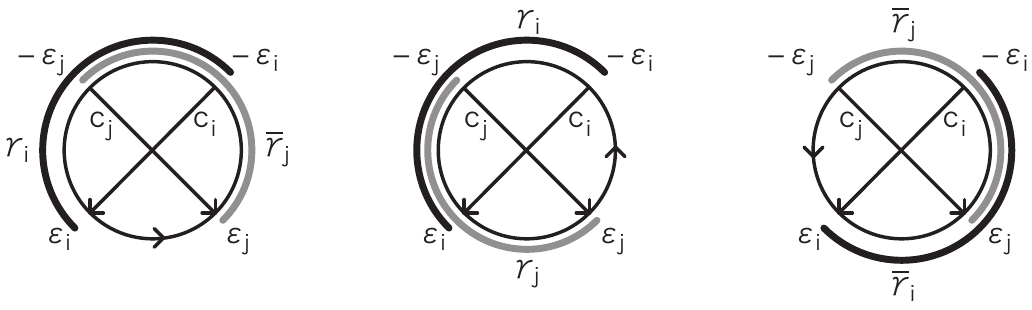}
\caption{}
\label{linked}
\end{center}
\end{figure}

Let ${\rm c}(K)$ denote the crossing number of $K$, 
which is the minimal number of crossings 
for all diagrams of $K$. 
The virtual knots up to crossing number four are given 
by Green \cite{Gr}. 
In what follows, the labels of virtual knots are due to Green's table.

\begin{example}\label{ex34}
We consider the Gauss diagram $G$ of a virtual knot $K=4.39$ 
as shown in Figure~\ref{ex3-3}. 
Table~\ref{table2}
shows the intersection numbers  
$\gamma_i\cdot \overline{\gamma}_j$, 
$\gamma_i\cdot\gamma_j$, and 
$\overline{\gamma}_i\cdot\overline{\gamma}_j$ 
for $1\leq i,j\leq 4$. 
Since we have $\varepsilon_1=\varepsilon_2=\varepsilon_3=-$ 
and $\varepsilon_4=+$, 
it holds that 
\begin{eqnarray*}
W_K(t)&=&\sum_{i=1}^n\varepsilon_i
(t^{\gamma_i\cdot\overline{\gamma}_i}-1)=
-t^3+t^2+1-t^{-1} \mbox{ and }\\
\overline{W}_K(t)&=&-t^3+t^2-t+2-t^{-1}+t^{-2}-t^{-3}.
\end{eqnarray*}
On the other hand, we have 
\begin{eqnarray*}
f_{01}(D)&=&
\sum_{1\leq i,j\leq n} \varepsilon_i\varepsilon_j(t^{\gamma_i\cdot\overline{\gamma}_j}-1)
=2t^2-2t-2+2t^{-1}, \\
f_{00}(D)&=&
\sum_{1\leq i,j\leq n} \varepsilon_i\varepsilon_j(t^{\gamma_i\cdot\gamma_j}-1)
=t^3-t^2-t^{-2}+t^{-3}, \mbox{ and }\\
f_{11}(D)&=&
\sum_{1\leq i,j\leq n} \varepsilon_i\varepsilon_j(t^{\overline{\gamma}_i\cdot
\overline{\gamma}_j}-1)=t^2-t-t^{-1}+t^{-2}. 
\end{eqnarray*}
Since $\omega_D=-2$ holds, 
we obtain 
\begin{eqnarray*}
I_K(t)&=&-2t^3+4t^2-2t, \\
I\!I_K(t)&=&-t^3+2t^2-3t+4-3t^{-1}+2t^{-2}-t^{-3}, \mbox{ and }\\
I\!I\!I_K(t)&\equiv&-t+2-t^{-1} 
\quad \mbox{(mod~$-t^3+t^2-t+2-t^{-1}+t^{-2}-t^{-3}$)}.
\end{eqnarray*}
\hfill$\Box$
\end{example}

\begin{figure}[htb]
\begin{center}
\includegraphics[bb = 0 0 78.47 87.95]{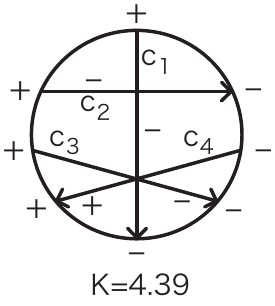}
\caption{}
\label{ex3-3}
\end{center}
\end{figure}

\begin{table}[htb]
\begin{center}
\begin{tabular}{|r|r|r|r|r|}
\hline
 & $\overline{\gamma}_1$ & $\overline{\gamma}_2$ & $\overline{\gamma}_3$ & 
$\overline{\gamma}_4$ \\
\hline
$\gamma_1$ & $3$ & $0$ & $2$ & $2$ \\
\hline
$\gamma_2$ & $2$ & $-1$ & $0$ & $1$ \\
\hline
$\gamma_3$ & $1$ & $-1$ & $0$ & $1$ \\
\hline
$\gamma_4$ & $3$ & $0$ & $1$ & $2$ \\
\hline
\end{tabular}
\quad
\begin{tabular}{|r|r|r|r|r|}
\hline
 & $\gamma_1$ & $\gamma_2$ & $\gamma_3$ & 
$\gamma_4$ \\
\hline
$\gamma_1$ & $0$ & $3$ & $1$ & $1$ \\
\hline
$\gamma_2$ & $-3$ & $0$ & $-1$ & $-2$ \\
\hline
$\gamma_3$ & $-1$ & $1$ & $0$ & $-1$ \\
\hline
$\gamma_4$ & $-1$ & $2$ & $1$ & $0$ \\
\hline
\end{tabular}
\quad
\begin{tabular}{|r|r|r|r|r|}
\hline
 & $\overline{\gamma}_1$ & $\overline{\gamma}_2$ & $\overline{\gamma}_3$ & 
$\overline{\gamma}_4$ \\
\hline
$\overline{\gamma}_1$ & $0$ & $-1$ & $-2$ & $0$ \\
\hline
$\overline{\gamma}_2$ & $1$ & $0$ & $0$ & $1$ \\
\hline
$\overline{\gamma}_3$ & $2$ & $0$ & $0$ & $1$ \\
\hline
$\overline{\gamma}_4$ & $0$ & $-1$ & $-1$ & $0$ \\
\hline 
\end{tabular}
\end{center}
\caption{}
\label{table2}
\end{table}

\begin{theorem}\label{thm35}
For the virtual knots $K$ with ${\rm c}(K)\leq 4$, 
the intersection polynomials $I_K(t)$, $I\!I_K(t)$, 
and $I\!I\!I_K(t)$ 
are given in {\rm Appendices \ref{appA}} and {\rm \ref{appB}}. 
\hfill$\Box$
\end{theorem}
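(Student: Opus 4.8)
\textbf{Proof proposal for Theorem~\ref{thm35}.}

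The statement is purely computational: it asserts that the entries listed in the appendices are correct. The plan is therefore to describe the algorithm that produces these entries and to explain why executing it on Green's census \cite{Gr} of virtual knots with crossing number at most four yields the tabulated polynomials. The whole point of Section~\ref{sec3} up to this theorem has been to reduce the computation of the intersection numbers $\gamma_i\cdot\overline{\gamma}_j$, $\gamma_i\cdot\gamma_j$, and $\overline{\gamma}_i\cdot\overline{\gamma}_j$ to a purely combinatorial reading of a Gauss diagram via Lemmas~\ref{lem31} and~\ref{lem32}, as illustrated in Example~\ref{ex34}, so the proof is essentially ``run this procedure on each of the finitely many knots in the table.''

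Concretely, I would proceed in the following steps. First, for each virtual knot $K$ with ${\rm c}(K)\le 4$, fix the Gauss diagram $G$ from Green's table and record, for each chord $c_i$, its sign $\varepsilon_i$ and the cyclic word of signed endpoints. Second, for every ordered pair $(i,j)$, determine whether the chords $c_i$ and $c_j$ are linked or unlinked, compute the combinatorial quantity $S(\gamma_i,\overline{\gamma}_j)$ (and likewise with $\gamma$'s and $\overline{\gamma}$'s in the other slots) as the signed count of endpoints described before Lemma~\ref{lem32}, and then apply Lemma~\ref{lem32}(i) or~(ii) together with the sign-correction formulas displayed after Figure~\ref{linked} to obtain the actual intersection numbers; this produces the three $n\times n$ tables exactly as in Table~\ref{table2}. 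Third, assemble $W_K(t)$ and $\overline{W}_K(t)$ from the diagonal entries via Definition~\ref{def21}, assemble $f_{01}(D)$, $f_{00}(D)$, $f_{11}(D)$ from the full tables, and then form $I_K(t)=f_{01}(D)-\omega_D W_K(t)$, $I\!I_K(t)=f_{00}(D)+f_{11}(D)-\omega_D\overline{W}_K(t)$, and $I\!I\!I_K(t)\equiv f_{00}(D)$ modulo $\overline{W}_K(t)$, exactly as in Definitions~\ref{def27} and~\ref{def29}. Theorem~\ref{thm26} guarantees that the resulting polynomials do not depend on the chosen diagram, so the output is a genuine invariant of $K$; Lemma~\ref{lem211} provides a sanity check on the classical entries.

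The steps above are routine but voluminous, so in practice the computation should be carried out by computer and its output compared against the hand computation of representative cases such as Example~\ref{ex34}. The main obstacle is not mathematical but bookkeeping: one must use a consistent convention for reading off the arcs $\gamma_i$ versus $\overline{\gamma}_i$ (the arc from the initial to the terminal endpoint of the oriented chord $c_i$, as fixed before Lemma~\ref{lem31}), and one must be careful with the half-integer correction terms $\pm(\varepsilon_i\pm\varepsilon_j)/2$ that arise precisely when $c_i$ and $c_j$ are linked — a sign or orientation slip there propagates into every polynomial. Since Green's table is finite and the reduction to Gauss-diagram combinatorics is complete, once the conventions are pinned down the verification is mechanical, and the tables in Appendices~\ref{appA} and~\ref{appB} record its result.
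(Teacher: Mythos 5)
Your proposal matches the paper's approach exactly: the paper offers no separate proof for this theorem (it is stated with a terminating box), relying instead on the Gauss-diagram computation of the intersection numbers via Lemmas~\ref{lem31} and~\ref{lem32} as worked out in Example~\ref{ex34}, applied knot by knot to Green's census. Your description of that procedure, including the linked/unlinked case distinction and the half-integer corrections, is precisely what the tables in Appendices~\ref{appA} and~\ref{appB} record.
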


By observing the calculations in Appendices \ref{appA} and \ref{appB}, 
we see that the writhe polynomial and 
the intersection polynomials are 
independent of each other in the following sense.

\begin{proposition}\label{prop36}
There are four pairs of virtual knots 
$K_i$ and $K_i'$ $(i=1,2,3,4)$ 
which satisfy the following. 
\begin{itemize}
\item[{\rm (i)}] 
$W_{K_1}(t)\ne W_{K_1'}(t)$ and 
$X_{K_1}(t)= X_{K_1'}(t)$ $(X=I, I\!I, I\!I\!I)$. 
\item[{\rm (ii)}] 
$I_{K_2}(t)\ne I_{K_2'}(t)$ and 
$X_{K_2}(t)= X_{K_2'}(t)$ $(X=W, I\!I, I\!I\!I)$. 
\item[{\rm (iii)}] 
$I\!I_{K_3}(t)\ne I\!I_{K_3'}(t)$ and 
$X_{K_3}(t)= X_{K_3'}(t)$ $(X=W, I, I\!I\!I)$. 
\item[{\rm (iv)}] 
$I\!I\!I_{K_4}(t)\ne I\!I\!I_{K_4'}(t)$ and 
$X_{K_4}(t)= X_{K_4'}(t)$ $(X=W, I, I\!I)$. 
\end{itemize}
\end{proposition}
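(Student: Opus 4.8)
The plan is to prove Proposition~\ref{prop36} by an explicit exhibition of witnesses drawn from the tables in Appendices~\ref{appA} and \ref{appB}. This is fundamentally a search problem over a finite list (the virtual knots up to crossing number four, in Green's table), so the proof is a verification, not an argument; there is no obstacle in principle, only the bookkeeping of locating suitable pairs. First I would scan the appendix tables for four separate phenomena, one per item. For (i), I look for two knots whose writhe polynomials $W_K(t)$ disagree but whose triples $(I_K,I\!I_K,I\!I\!I_K)$ all coincide; a natural place to look is among knots related by an operation that is known to preserve the intersection polynomials but not $W$ — for instance, certain crossing changes or the passage between $K$ and a knot sharing the same Gauss-diagram ``second-order'' data. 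For (ii), (iii), (iv) I look for pairs distinguished by exactly one of $I$, $I\!I$, $I\!I\!I$ while agreeing on $W$ and on the other two intersection polynomials.

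The key steps, in order, are: (1) state that the claimed pairs $K_i, K_i'$ are taken from Green's table and cite their labels explicitly (e.g.\ of the form $m.k$); (2) for each pair, record the relevant polynomials $W$, $I$, $I\!I$, $I\!I\!I$, either by quoting the appendix entries or by a one-line Gauss-diagram computation in the style of Example~\ref{ex34}; (3) observe by inspection that the required equalities and inequality hold in each of the four cases. Since Theorem~\ref{thm35} already asserts that the appendix tables are correct, step (2) can simply point to those tables rather than redo the calculation, so the written proof collapses to ``See the entries for $K_i$ and $K_i'$ in Appendix~\ref{appA}/\ref{appB}; the equalities and inequalities are immediate.''

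One subtlety worth flagging: for $I\!I\!I_K(t)$ the ``value'' is only an equivalence class mod $\overline{W}_K(t)$, so in items (i) and (iv) the comparison of third intersection polynomials must be read in the appropriate quotient. In item (i), since $W_{K_1}\ne W_{K_1'}$, the moduli $\overline{W}_{K_1}(t)$ and $\overline{W}_{K_1'}(t)$ may differ as well; I would choose the witness pair so that both represent the zero class (or a common class) in their respective quotients, which is the cleanest way to assert $I\!I\!I_{K_1}=I\!I\!I_{K_1'}$ unambiguously. In item (iv), conversely, I need $\overline{W}_{K_4}=\overline{W}_{K_4'}$ (forced by $W_{K_4}=W_{K_4'}$) and then two representatives that are genuinely inequivalent mod that common polynomial.

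The main obstacle, such as it is, will be simply finding all four pairs inside the crossing-number-$\le 4$ list; it is conceivable that one of the four independence assertions is not witnessed until crossing number four and requires careful reading of the larger sub-tables, or that for some item one must search among the $128$-odd knots for a pair agreeing on three invariants simultaneously. If no such pair exists within $\mathrm{c}(K)\le 4$, the statement as phrased would be false, so I am implicitly trusting that the authors have already located these pairs; the write-up then just names them. I would present the final proof as a short paragraph per item, each of the form ``Take $K_i=\,$[label] and $K_i'=\,$[label]. From Appendix~\ref{appA} (resp.\ \ref{appB}) we read [the polynomials]; comparing, [the three equalities] hold while [the one inequality] holds. $\qed$''
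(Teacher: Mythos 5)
Your strategy is exactly the paper's: the proof consists of nothing more than naming four explicit pairs from Green's table and reading the invariants off the appendix tables. The one thing missing from your write-up is the one thing that constitutes the proof, namely the witnesses themselves; until they are named, nothing has been established. For the record, the paper takes $K_1=4.36$, $K_1'=4.65$ for (i); $K_2=O$ (the trivial knot), $K_2'=4.16$ for (ii); $K_3=3.2$, $K_3'=4.33$ for (iii); and $K_4=O$, $K_4'=4.13$ for (iv). Note that in (ii) and (iv) one member of the pair is the unknot, so the three required equalities hold because every invariant of $K_i'$ other than the distinguishing one happens to vanish; this is the cheapest way to satisfy ``agree on three invariants simultaneously,'' and it makes the search you were worried about far easier than a generic scan of the $100$-odd four-crossing knots.

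Your two flagged subtleties about $I\!I\!I$ are real but resolve slightly differently than you propose. In (i) the paper does not arrange for both third intersection polynomials to be the zero class; instead $W_{4.36}(t)=t^2-2+t^{-2}$ and $W_{4.65}(t)=-t^2+2-t^{-2}$ are negatives of each other, so $\overline{W}_{K_1}(t)$ and $\overline{W}_{K_1'}(t)$ agree up to sign and generate the same equivalence relation, making the two classes directly comparable (both equal $t^2-2+t^{-2}$ mod $2t^2-4+2t^{-2}$). In (iv) your observation that $W_{K_4}=W_{K_4'}$ forces a common modulus is correct, and in fact $\overline{W}=0$ for both $O$ and $4.13$, so by the paper's convention the comparison of $I\!I\!I$ is an honest equality of polynomials: $0$ versus $2t-4+2t^{-1}$.
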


\begin{proof}
(i) For the virtual knots $K_1=4.36$ and $K_1'=4.65$, 
it holds that 
$$W_{K_1}(t)=t^2-2+t^{-2}\mbox{ and }W_{K_1'}(t)=-t^2+2-t^{-2}.$$
On the other hand, we have 
\begin{eqnarray*}
I_{K_1}(t)&=&I_{K_1'}(t)=-t^2+2-t^{-2}, \\
I\!I_{K_1}(t)&=&I\!I_{K_1'}(t)=-2t^2+4-2t^{-2}, \mbox{ and}\\
I\!I\!I_{K_1}(t)&=&I\!I\!I_{K_1'}(t) \equiv t^2-2+t^{-2} \quad 
\mbox{(mod~$2t^2-4+2t^{-2}$)}. 
\end{eqnarray*}

(ii) For the trivial knot $K_2=O$ and the virtual knot $K_2'=4.16$, 
it holds that 
$$I_{K_2}(t)=0 \mbox{ and }I_{K_2'}(t)=-t^2+3t-3+t^{-1}. $$
On the other hand, we have 
\begin{eqnarray*}
W_{K_2}(t)&=&W_{K_2'}(t)=0, \\ 
I\!I_{K_2}(t)&=&I\!I_{K_2'}(t)=0, \mbox{ and} \\
I\!I\!I_{K_1}(t)&=&I\!I\!I_{K_1'}(t)=0. 
\end{eqnarray*} 

(iii) For the virtual knots $K_3=3.2$ and $K_3'=4.33$, 
it holds that 
$$I\!I_{K_3}(t)=-2t+4-2t^{-1}\mbox{ and }
I\!I_{K_3'}(t)=t^2-6t+10-6t^{-1}+t^{-2}.$$
On the other hand, we have 
\begin{eqnarray*}
W_{K_3}(t)&=&W_{K_3'}(t)=-t+2-t^{-1}, \\
I_{K_3}(t)&=&I_{K_3'}(t)=-t+2-t^{-1}, \mbox{ and}\\
I\!I\!I_{K_1}(t)&=&I\!I\!I_{K_1'}(t)\equiv t-2+t^{-1} \quad 
\mbox{(mod~$2t-4+2t^{-1}$)}. 
\end{eqnarray*}

(iv) For the virtual knots $K_4=O$ and $K_4'=4.13$, 
it holds that 
$$I\!I\!I_{K_4}(t)=0 \mbox{ and }
I\!I\!I_{K_4'}(t)=2t-4+2t^{-1}.$$
On the other hand, we have 
\begin{eqnarray*}
W_{K_4}(t)&=&W_{K_4'}(t)=0, \\
I_{K_4}(t)&=&I_{K_4'}(t)=0, \mbox{ and}\\
I\!I_{K_4}(t)&=&I\!I_{K_4'}(t)=0.
\end{eqnarray*}

The Gauss diagrams of the knots 
are illustrated in Figure~\ref{cor3-5}. 
\end{proof}

\begin{figure}[htb]
\begin{center}
\includegraphics[bb = 0 0 354.94 70.32]{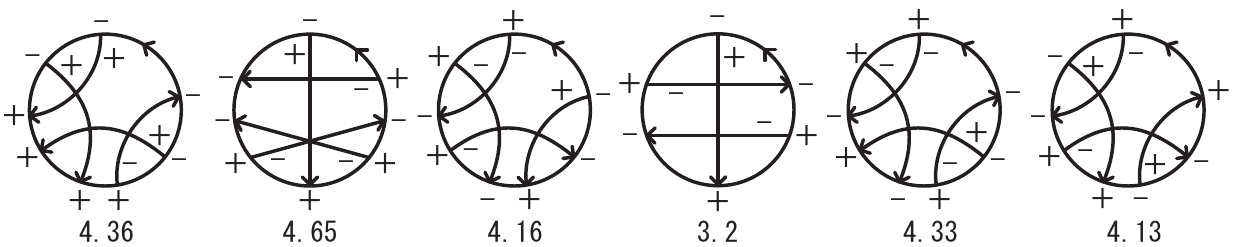}
\caption{}
\label{cor3-5}
\end{center}
\end{figure}

\begin{remark}\label{rem37} 
For a virtual knot $K$, 
Silver and Williams \cite{SW} define a sequence of 
Alexander polynomials $\Delta_i(K)$ 
as an extension of the classical Alexander polynomial. 
The zero-th Alexander polynomial $\Delta_0(K)(u,v)$ 
is also defined by Sawollek \cite{Saw}. 
Mellor \cite{Mel} proves that the writhe polynomial 
is obtained from $\Delta_0(K)(u,v)=(1-uv)\widetilde{\Delta}_0(K)(u,v)$ 
by the equation 
$$W_K(t)=-\widetilde{\Delta}_0(K)(t,t^{-1}).$$
It is natural to ask whether the intersection polynomials 
are also obtained from $\Delta_i(K)$. 
However this does not hold generally; 
in fact, for the virtual knot $K=4.8$, 
we have $\Delta_0(K)=0$ and $\Delta_i(K)=1$ $(i\geq 1)$ 
which are coincident with those of the trivial knot. 
On the other hand, it holds that 
the intersection polynomials $I_K(t)$, $I\!I_K(t)$, and ${\III}_K(t)$ are all non-trivial. 
\end{remark}


\section{Symmetries and crossing numbers}\label{sec4} 

For a diagram $D$ on $\Sigma$ of a virtual knot $K$, 
let $-D$ be the diagram 
by reversing the orientation of $D$, 
$D^\#$ the one 
by changing over/under-information at every crossing of $D$, 
and $D^*$ the one obtained 
by an orientation-reversing homeomorphism of $\Sigma$. 
The virtual knots presented by 
$-D$, $D^\#$, and $D^*$
are called the {\it reverse}, the {\it vertical mirror image}, 
and the {\it horizontal mirror image} of $K$, 
and denoted by $-K$, $K^\#$, and $K^*$, respectively. 
The writhe polynomials of these knots 
are given as follows. 

\begin{lemma}[\cite{CG,K3,ST}]\label{lem51}
Any virtual knot $K$ satisfies 
$$W_{-K}(t)=W_K(t^{-1}) \mbox{ and }
W_{K^\#}(t)=W_{K^*}(t)=-W_K(t^{-1}).$$
Therefore we have 
$$\overline{W}_{-K}(t)=\overline{W}_K(t) \mbox{ and }
\overline{W}_{K^\#}(t)=\overline{W}_{K^*}(t)
=-\overline{W}_K(t).$$
\end{lemma}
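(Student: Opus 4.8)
The plan is to track how the cycles at each crossing transform under the three operations $D\mapsto -D$, $D\mapsto D^\#$, and $D\mapsto D^*$, and then substitute into the definition of $W_D(t)$. For reversal, reversing the orientation of $D$ swaps, at each crossing $c_i$, the roles of the arc from overcrossing to undercrossing and the arc from undercrossing to overcrossing, so the cycle $\gamma_i$ for $-D$ is the curve $-\overline{\gamma}_i$ (the old $\overline{\gamma}_i$ traversed backwards); the sign $\varepsilon_i$ is unchanged under reversal. Since reversing the orientation of a cycle negates it as a homology class and since $(-\overline{\gamma}_i)\cdot(-\gamma_i)=\overline{\gamma}_i\cdot\gamma_i=-\gamma_i\cdot\overline{\gamma}_i$, the index of $c_i$ in $-D$ is the negative of the index in $D$. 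Hence $W_{-D}(t)=\sum_i\varepsilon_i(t^{-\gamma_i\cdot\overline{\gamma}_i}-1)=W_D(t^{-1})$, which gives the first equation.

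For the vertical mirror image, Lemma~\ref{lem22} already states $W_{D^\#}(t)=-W_D(t^{-1})$, so the equality $W_{K^\#}(t)=-W_K(t^{-1})$ is immediate. For the horizontal mirror image, an orientation-reversing homeomorphism $\varphi\colon\Sigma\to\Sigma$ carries $D$ to $D^*$, carries the crossing $c_i$ to a crossing $c_i^*$ whose cycle is $\varphi_*(\gamma_i)$ and whose over/under data and orientation are preserved; but because $\varphi$ reverses orientation, $\varphi_*$ reverses the intersection form: $\varphi_*(\alpha)\cdot\varphi_*(\beta)=-\alpha\cdot\beta$. Also the sign $\varepsilon_i$ of a crossing is reversed by an orientation-reversing homeomorphism. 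These two sign changes combine: the index $\gamma_i^*\cdot\overline{\gamma}_i^*=-\gamma_i\cdot\overline{\gamma}_i$, while $\varepsilon_i^*=-\varepsilon_i$, so $W_{D^*}(t)=\sum_i(-\varepsilon_i)(t^{-\gamma_i\cdot\overline{\gamma}_i}-1)=-W_D(t^{-1})$. This establishes $W_{K^\#}(t)=W_{K^*}(t)=-W_K(t^{-1})$.

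Finally, the three $\overline{W}$ identities follow by direct substitution into $\overline{W}_K(t)=W_K(t)+W_K(t^{-1})$: for reversal, $\overline{W}_{-K}(t)=W_K(t^{-1})+W_K(t)=\overline{W}_K(t)$; for either mirror image, $\overline{W}_{K^\#}(t)=-W_K(t^{-1})-W_K(t)=-\overline{W}_K(t)$, and likewise for $K^*$. The one point requiring care, and the main thing to get right, is the bookkeeping of the \emph{two} independent sign flips in the horizontal mirror case — the crossing sign flips because $\varepsilon_i$ depends on an orientation of $\Sigma$, and the intersection number flips because the intersection pairing is skew under an orientation-reversing homeomorphism — so that their combined effect is to replace $\varepsilon_i(t^{k}-1)$ by $-\varepsilon_i(t^{-k}-1)$; keeping these two effects separate is what makes the argument correct.
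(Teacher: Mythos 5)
Your proof is correct. The paper states this lemma with citations and gives no proof of its own, but your argument uses exactly the bookkeeping the paper employs elsewhere: the identities $\gamma_i'=-\overline{\gamma}_i$, $\varepsilon_i'=\varepsilon_i$ for $-D$, the content of Lemma~\ref{lem22} for $D^\#$, and the pair of sign flips $\varepsilon_i^*=-\varepsilon_i$, $\gamma_i^*\cdot\overline{\gamma}_i^*=-\gamma_i\cdot\overline{\gamma}_i$ for $D^*$, all of which appear verbatim in the proofs of Lemmas~\ref{lem22} and \ref{lem52}.
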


The intersection polynomials of 
$-K$, $K^\#$, and $K^{*}$ are 
given as follows. 

\begin{lemma}\label{lem52}
For a virtual knot $K$, 
we have the following. 
\begin{itemize}
\item[{\rm (i)}] 
$I_{-K}(t)=I_{K^\#}(t)=I_{K^*}(t)=I_K(t^{-1})$. 
\item[{\rm (ii)}] 
$I\!I_{-K}(t)=I\!I_{K^\#}(t)=I\!I_{K^*}(t)=I\!I_K(t)$. 
\item[{\rm (iii)}] 
$I\!I\!I_{-K}(t)=I\!I\!I_{K^\#}(t)=I\!I_K(t)-I\!I\!I_K(t)$ 
and 
$I\!I\!I_{K^{*}}(t)=I\!I\!I_K(t)$. 
\end{itemize}
\end{lemma}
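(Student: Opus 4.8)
The plan is to work at the level of a fixed diagram $D$ on $\Sigma$ and track how each of the building blocks $f_{pq}(D)$, $\omega_D$, $W_K(t)$, and $\overline{W}_K(t)$ transforms under the three operations $-(\,\cdot\,)$, $(\,\cdot\,)^\#$, and $(\,\cdot\,)^*$. The case $(\,\cdot\,)^\#$ is already essentially done: Lemma~\ref{lem23} gives $f_{10}(D)=f_{01}(D^\#)$ and $f_{11}(D)=f_{00}(D^\#)$, the sign-reversal $\varepsilon_i^\#=-\varepsilon_i$ gives $\omega_{D^\#}=-\omega_D$, and Lemma~\ref{lem22} and the displayed identity before Lemma~\ref{lem25} give $W_{D^\#}(t)=-W_D(t^{-1})$ and $\overline{W}_{D^\#}(t)=-\overline{W}_D(t)$. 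Combining these with Remark~\ref{rem28} (namely $f_{10}(D;t)-\omega_DW_K(t^{-1})=I_K(t^{-1})$) should immediately yield the $K^\#$ columns of (i), (ii), (iii).

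First I would handle the reverse $-K$. Reversing the orientation of $D$ fixes each crossing and its sign, so $\omega_{-D}=\omega_D$, but it exchanges the two cycles at each crossing, so the cycle data becomes $\gamma_i\mapsto\overline{\gamma}_i$ (up to orientation of the cycle itself, which does not affect intersection numbers since $\alpha\cdot\beta$ is bilinear and $(-\alpha)\cdot(-\beta)=\alpha\cdot\beta$). Care is needed here: reversing the orientation of the ambient curve reverses the orientation of $\gamma_D$, hence of every sub-curve, and also flips the sign convention ``from the left/right'' in the definition of $\alpha\cdot\beta$; I expect these two sign flips to cancel, so that $\gamma_i\cdot\overline{\gamma}_j$ for $-D$ equals $\overline{\gamma}_i\cdot\gamma_j$ for $D$. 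This gives $f_{01}(-D)=f_{10}(D)$, $f_{00}(-D)=f_{11}(D)$, $f_{11}(-D)=f_{00}(D)$, and $W_{-K}(t)=W_K(t^{-1})$ (consistent with Lemma~\ref{lem51}). Feeding this into the definitions $I_D=f_{01}-\omega_DW_K$ and $I\!I_D=f_{00}+f_{11}-\omega_D\overline{W}_K$ and using $\overline{W}_{-K}=\overline{W}_K$ should produce $I_{-K}(t)=I_K(t^{-1})$, $I\!I_{-K}(t)=I\!I_K(t)$, and $I\!I\!I_{-K}(t)=f_{11}(D)\equiv I\!I_K(t)-I\!I\!I_K(t)\pmod{\overline{W}_K(t)}$, the last step using Remark~\ref{rem210}; here one must also check that the modulus $\overline{W}_{-K}$ agrees with $\overline{W}_K$, which it does.

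Next the horizontal mirror $K^*$, obtained by an orientation-reversing homeomorphism $h$ of $\Sigma$. Such an $h$ preserves each crossing but reverses the sign of every crossing (an orientation-reversing homeomorphism takes a positive crossing to a negative one), so $\omega_{D^*}=-\omega_D$; it preserves the cycles $\gamma_i,\overline{\gamma}_i$ as curves but reverses all homology intersection numbers, since $h_*$ acts on $H_1(\Sigma)$ by an isometry of the intersection form that reverses its sign. Hence for $D^*$ we get $\gamma_i\cdot\overline{\gamma}_j\mapsto-\gamma_i\cdot\overline{\gamma}_j$ and likewise for the other pairings, while $\varepsilon_i\varepsilon_j$ is unchanged. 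Thus $f_{01}(D^*;t)=f_{01}(D;t^{-1})$, $f_{00}(D^*;t)=f_{00}(D;t^{-1})$, $f_{11}(D^*;t)=f_{11}(D;t^{-1})$, and $W_{K^*}(t)=-W_K(t^{-1})$, $\overline{W}_{K^*}(t)=-\overline{W}_K(t)$, matching Lemma~\ref{lem51}. Then $I_{D^*}(t)=f_{01}(D;t^{-1})-(-\omega_D)(-W_K(t^{-1}))=I_K(t^{-1})$ for (i); $I\!I_{D^*}(t)=f_{00}(D;t^{-1})+f_{11}(D;t^{-1})-(-\omega_D)(-\overline{W}_K(t))$, and since $I\!I_K$ turns out to be palindromic (this is visible from $I\!I_{-K}=I\!I_K$ together with $I\!I_{K^*}=I\!I_K(t^{-1})$, or can be checked directly from $\overline{W}$ being palindromic), this equals $I\!I_K(t)$ for (ii); and $I\!I\!I_{D^*}(t)=f_{00}(D;t^{-1})\equiv I\!I\!I_K(t)\pmod{\overline{W}_K(t)}$ for (iii), using that $\overline{W}_K(t^{-1})=\overline{W}_K(t)$ so the modulus is unchanged and that $I\!I\!I_K$ is itself palindromic mod $\overline{W}_K$.

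The main obstacle will be the sign bookkeeping in the two geometric transformations --- precisely, verifying that reversing the orientation of the immersed curve negates intersection numbers in the way claimed for $-D$ (two competing sign reversals), and that an orientation-reversing homeomorphism of $\Sigma$ simultaneously negates all crossing signs and negates the intersection form. I would pin both down by a careful local-picture argument in the spirit of Figures~\ref{H2} and \ref{gamma}, or equivalently by re-deriving the transformation rules directly from the Gauss-diagram formulas of Lemmas~\ref{lem31} and \ref{lem32}: reversing the curve's orientation flips the cyclic order on $S^1$ and all endpoint signs, while the horizontal mirror corresponds to reflecting the Gauss diagram, and in each case one reads off the effect on $S(\alpha,\beta)$ and on the correction term $\tfrac12(\varepsilon+\delta)$. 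Once the transformation rules for $f_{pq}$, $\omega_D$, $W_K$, $\overline{W}_K$ are established, (i)--(iii) follow by substitution into the definitions of $I_D$, $I\!I_D$, $I\!I\!I_D$ together with Remarks~\ref{rem28} and \ref{rem210}.
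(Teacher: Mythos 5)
Your proposal is correct and follows essentially the same route as the paper: fix a diagram, determine how $f_{pq}(D)$, $\omega_D$, $W_K$, and $\overline{W}_K$ transform under each of $-(\,\cdot\,)$, $(\,\cdot\,)^\#$, $(\,\cdot\,)^*$ (with exactly the sign rules you state, namely $\gamma_i\mapsto -\overline{\gamma}_i$ for $-D$, the swap from Lemma~\ref{lem23} for $D^\#$, and $\varepsilon_i\mapsto-\varepsilon_i$ with negated intersection numbers for $D^*$), and substitute into the definitions using Remarks~\ref{rem28} and \ref{rem210}. The only point to make fully explicit is that $f_{00}(D;t)$ and $f_{11}(D;t)$ are reciprocal because the sums run over ordered pairs and $\gamma_i\cdot\gamma_j=-\gamma_j\cdot\gamma_i$, which is exactly how the paper closes the $K^*$ case for $I\!I$ and $\III$.
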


\begin{proof}
\underline{$I_{-K}(t)$, $I\!I_{-K}(t)$, and $I\!I\!I_{-K}(t)$.} 
Let $c_i'$ be the crossing of $-D$ corresponding to $c_i$, 
$\gamma_i'$ the cycle at $c_i'$ on $\Sigma$, 
and $\varepsilon_i'$ the sign of $c_i'$ $(1\leq i\leq n)$. 
Then it holds that 
$\gamma_i'=-\overline{\gamma}_i$ 
and $\varepsilon_i'=\varepsilon_i$. 
By definition, we have 
\begin{eqnarray*}
f_{01}(-D;t)&=&
\sum_{1\leq i,j\leq n}\varepsilon_i'\varepsilon_j'
(t^{\gamma_i'\cdot\overline{\gamma}_j'}-1)
=\sum_{1\leq i,j\leq n}\varepsilon_i\varepsilon_j
(t^{\overline{\gamma}_i\cdot\gamma_j}-1)\\
&=&\sum_{1\leq i,j\leq n}\varepsilon_i\varepsilon_j
(t^{-\gamma_j\cdot\overline{\gamma}_i}-1)
=f_{01}(D;t^{-1}), \\ 
f_{00}(-D;t)&=&
\sum_{1\leq i,j\leq n}\varepsilon_i\varepsilon_j
(t^{\overline{\gamma}_i\cdot\overline{\gamma}_j}-1)
=f_{11}(D;t), \mbox{ and }\\
f_{11}(-D;t)&=&
\sum_{1\leq i,j\leq n}\varepsilon_i\varepsilon_j
(t^{\gamma_i\cdot\gamma_j}-1)
=f_{00}(D;t).
\end{eqnarray*}

Since $\omega_{-D}=\omega_D$ holds, 
we have 
\begin{eqnarray*}
I_{-K}(t)&=&
f_{01}(-D;t)-\omega_{-D}W_{-D}(t)\\
&=&f_{01}(D;t^{-1})-\omega_DW_D(t^{-1}) 
=I_K(t^{-1}), \\
I\!I_{-K}(t)&=&
f_{00}(-D;t)+f_{11}(-D;t)-\omega_{-D}\bigl(W_{-D}(t)+W_{-D}(t^{-1})\bigr)\\
&=&
f_{11}(D;t)+f_{00}(D;t)-\omega_D
\bigl(W_D(t^{-1})+W_D(t)\bigr)
=I\!I_K(t) \mbox{ and }\\
I\!I\!I_{-K}(t)&\equiv&
f_{00}(-D;t)=f_{11}(D;t)
\equiv I\!I_K(t)-f_{00}(D;t)\equiv I\!I_K(t)-I\!I\!I_K(t). 
\end{eqnarray*}

\underline{$I_{K^\#}(t)$, $I\!I_{K^\#}(t)$, and 
$I\!I\!I_{K^\#}(t)$.} 
We use the notations in the proof of 
Lemma~\ref{lem22}. 
By the lemma, 
we have 
$f_{00}(D^\#;t)=f_{11}(D;t)$ 
and hence $f_{11}(D^\#;t)=f_{00}(D;t)$. 
Furthermore it holds that 
$$f_{01}(D^\#;t)=
\sum_{1\leq i,j\leq n}(-\varepsilon_i)(-\varepsilon_j)
(t^{\overline{\gamma}_i\cdot\gamma_j}-1)
=f_{01}(D;t^{-1}).$$
Since it holds that 
$\omega_{D^\#}=-\omega_D$ and 
$W_{D^\#}(t)=-W_D(t^{-1})$, 
we have 
\begin{eqnarray*}
I_{K^\#}(t)&=&
f_{01}(D^\#;t)-\omega_{D^\#}W_{D^\#}(t)\\
&=&f_{01}(D;t^{-1})-\omega_DW_D(t^{-1}) 
=I_K(t^{-1}) \\
I\!I_{K^\#}(t)&=&
f_{00}(D^\#;t)+f_{11}(D^\#;t)-\omega_{D^\#}\bigl(W_{D^\#}(t)+W_{D^\#}(t^{-1})\bigr)\\
&=&
f_{11}(D;t)+f_{00}(D;t)-\omega_D
\bigl(W_D(t^{-1})+W_D(t)\bigr)
=I\!I_K(t), \mbox{ and }\\
I\!I\!I_{K^\#}(t)&\equiv&
f_{00}(D^\#;t)=f_{11}(D;t) 
\equiv I\!I_K(t)-f_{00}(D;t)\equiv 
I\!I_K(t)-I\!I\!I_K(t). 
\end{eqnarray*}

\underline{$I_{K^*}(t)$, $I\!I_{K^*}(t)$, 
and $I\!I\!I_{K^*}(t)$.} 
Let $c_i^*$ be the crossing of $D^*$ corresponding to $c_i$, 
$\gamma_i^*$ the cycle at $c_i^*$ on $\Sigma$, 
and $\varepsilon_i^*$ the sign of $c_i^*$ $(1\leq i\leq n)$. 
Then it holds that 
$$\gamma_i^*\cdot\overline{\gamma}_j^*
=-\gamma_i\cdot\overline{\gamma}_j, \ 
\gamma_i^*\cdot\gamma_j^*
=-\gamma_i\cdot\gamma_j, \ 
\overline{\gamma}_i^* \cdot\overline{\gamma}_j^*
=-\overline{\gamma}_i\cdot\overline{\gamma}_j$$
and $\varepsilon_i^*=-\varepsilon_i$. 
Since $f_{00}(D;t)$ and $f_{11}(D;t)$ are reciprocal,
we have 
$$f_{01}(D^*;t)=f_{01}(D;t^{-1}), \ 
f_{00}(D^*;t)=f_{00}(D;t), \mbox{ and }
f_{11}(D^*;t)=f_{11}(D;t).$$
Since it holds that 
$\omega_{D^*}=-\omega_D$ and 
$W_{D^*}(t)=-W_D(t^{-1})$, 
we have 
\begin{eqnarray*}
I_{K^*}(t)&=&
f_{01}(D^*;t)-\omega_{D^*}W_{D^*}(t)\\
&=&f_{01}(D;t^{-1})-\omega_DW_D(t^{-1}) 
=I_K(t^{-1}),  \\
I\!I_{K^*}(t)&=&
f_{00}(D^*;t)+f_{11}(D^*;t)
-\omega_{D^*}\bigl(W_{D^*}(t)+W_{D^*}(t^{-1})\bigr)\\
&=&
f_{00}(D;t)+f_{11}(D;t)-\omega_D
\bigl(W_D(t^{-1})+W_D(t)\bigr)
=I\!I_K(t), \mbox{ and }\\
I\!I\!I_{K^*}(t)&\equiv&
f_{00}(D^*;t) = 
f_{00}(D;t)\equiv I\!I\!I_K(t). 
\end{eqnarray*}
\end{proof}

\begin{proposition}\label{prop53}
If a virtual knot $K$ satisfies 
\begin{itemize}
\item[{\rm (i)}] 
$2I\!I\!I_K(t)\not\equiv I\!I_K(t)$ 
{\rm (mod~$\overline{W}_K(t)$)}, 
\item[{\rm (ii)}] 
$W_K(t)\ne W_K(t^{-1})$, and 
\item[{\rm (iii)}] 
$W_K(t)\ne -W_K(t^{-1})$ or 
$I_K(t)\ne I_K(t^{-1})$, 
\end{itemize}
then the eight virtual knots 
$$K,-K,K^\#,K^*, 
-K^\#, -K^*, K^{\#*}, 
\mbox{ and } -K^{\#*}$$ 
are mutually distinct. 
\end{proposition}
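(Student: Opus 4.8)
The plan is to show that the eight virtual knots are mutually distinct by finding, for each pair among them, an invariant from $\{W, I, I\!I, I\!I\!I\}$ (or the symmetry-transformed versions thereof) that separates them, using Lemmas~\ref{lem51} and \ref{lem52} together with the three hypotheses. First I would tabulate, for each of the eight knots $K, -K, K^\#, K^*, -K^\#, -K^*, K^{\#*}, -K^{\#*}$, the values of the four invariants $W_{(-)}(t)$, $I_{(-)}(t)$, $I\!I_{(-)}(t)$, $I\!I\!I_{(-)}(t)$ expressed in terms of $W_K(t)$, $I_K(t)$, $I\!I_K(t)$, $I\!I\!I_K(t)$ and their $t\mapsto t^{-1}$ substitutions. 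By Lemma~\ref{lem52}, $I\!I$ is completely insensitive to all three symmetries, so it never helps; by the same lemma, $I_{(-)}(t)$ takes only the two values $I_K(t)$ and $I_K(t^{-1})$, and $I\!I\!I_{(-)}(t)$ takes only $I\!I\!I_K(t)$ and $I\!I_K(t)-I\!I\!I_K(t)$ modulo $\overline{W}_K(t)$. Meanwhile $W_{(-)}(t)$ takes the values $W_K(t)$, $W_K(t^{-1})$, $-W_K(t)$, $-W_K(t^{-1})$ depending on the parity of the number of mirror operations and whether the orientation is reversed.

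The next step is to use the hypotheses to see that these crude invariants already stratify the eight knots into singletons. Hypothesis (ii), $W_K(t)\ne W_K(t^{-1})$, together with Lemma~\ref{lem51}, means that $W$ distinguishes a knot from its reverse; more precisely, writing $w=W_K(t)$ and $w^-=W_K(t^{-1})$, the writhe polynomials of the eight knots are $w, w^-, -w^-, -w^-, -w, -w, w^-, w^-$ in the listed order — so $W$ alone separates the set into the groups $\{K\}$ (value $w$), $\{-K, K^{\#*}, -K^{\#*}\}$ (value $w^-$), $\{K^\#, K^*\}$ (value $-w^-$), and $\{-K^\#, -K^*\}$ (value $-w$), using (ii) and (iii) (the clause $W_K(t)\ne -W_K(t^{-1})$ in (iii), if it holds, already separates $\{-K^\#,-K^*\}$ from $\{-K,K^{\#*},-K^{\#*}\}$, while if instead $I_K(t)\ne I_K(t^{-1})$ holds we will use $I$ below). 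Within the group $\{K^\#, K^*\}$, apply $I\!I\!I$: by Lemma~\ref{lem52}(iii), $I\!I\!I_{K^\#}(t)\equiv I\!I_K(t)-I\!I\!I_K(t)$ while $I\!I\!I_{K^*}(t)\equiv I\!I\!I_K(t)$, and these differ modulo $\overline{W}_K(t)$ precisely because of hypothesis (i), $2I\!I\!I_K(t)\not\equiv I\!I_K(t)$. The same argument with $I\!I\!I$ separates $-K^\#$ from $-K^*$ (their $I\!I\!I$-values are $I\!I_K(t)-I\!I\!I_K(t)$ and, applying both the reverse and vertical-mirror rules, again $I\!I\!I_K(t)$ up to $\overline{W}_K$), and separates the three elements of $\{-K, K^{\#*}, -K^{\#*}\}$ among themselves by combining the $I\!I\!I$-values with the $I$-values $I_K(t^{-1})$ versus $I_K(t)$.

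The one genuinely delicate point is the group $\{-K, K^{\#*}, -K^{\#*}\}$: all three share the writhe polynomial $w^-$, so I must separate them using $I$ and $I\!I\!I$ simultaneously. Here Lemma~\ref{lem52} gives: $I_{-K}(t)=I_K(t^{-1})$, $I\!I\!I_{-K}(t)\equiv I\!I_K(t)-I\!I\!I_K(t)$; for $K^{\#*}=(K^\#)^*$, compose the rules to get $I_{K^{\#*}}(t)=I_K(t)$ and $I\!I\!I_{K^{\#*}}(t)\equiv I\!I_K(t)-I\!I\!I_K(t)$; for $-K^{\#*}$, get $I_{-K^{\#*}}(t)=I_K(t)$ and $I\!I\!I_{-K^{\#*}}(t)\equiv I\!I\!I_K(t)$. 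Thus $-K$ is separated from the other two by $I$ as soon as $I_K(t)\ne I_K(t^{-1})$, while $K^{\#*}$ is separated from $-K^{\#*}$ by $I\!I\!I$ via hypothesis (i); and if instead $W_K(t)\ne -W_K(t^{-1})$, then $-K$ already has writhe polynomial $w^-$ which is distinct from that of $-K^\#, -K^*$ and the $I\!I\!I$/$I$ computation still finishes the job. The main obstacle is the bookkeeping: one must carefully compose the three symmetry operations (checking, e.g., that $K^{\#*}$ and $K^{*\#}$ agree, and tracking how $t\mapsto t^{-1}$ accumulates), and verify that in every one of the $\binom{8}{2}=28$ pairs at least one invariant genuinely differs under the stated hypotheses — the casework split forced by the ``or'' in hypothesis (iii) is where care is most needed. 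Once the table is in place, each separation is immediate from Lemmas~\ref{lem51} and \ref{lem52} and hypotheses (i)--(iii).
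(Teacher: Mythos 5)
Your overall strategy---tabulating $W$, $I$, $I\!I$, $I\!I\!I$ for the eight knots via Lemmas~\ref{lem51} and \ref{lem52} and then separating pairs using (i)--(iii)---is the same as the paper's, but your table contains concrete errors that leave a genuine gap. Both $\#$ and $*$ act on the writhe polynomial by $W\mapsto -W(t^{-1})$, which is an involution, so $W_{K^{\#*}}(t)=W_K(t)$, not $W_K(t^{-1})$ as your list asserts. Consequently your partition by writhe is wrong: $\{K\}$ is not a singleton, since $K$ and $K^{\#*}$ share the writhe polynomial $W_K(t)$; they also share $I_{K^{\#*}}(t)=I_K(t)$ (two symmetry operations, each inverting $t$) and of course $I\!I$. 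The only invariant that separates this pair is $I\!I\!I$ (the values are $I\!I\!I_K(t)$ and $I\!I_K(t)-I\!I\!I_K(t)$, distinct by hypothesis (i)), and your argument never performs this comparison. A second error of the same kind: since each of $-$, $\#$, $*$ sends $I_K(t)$ to $I_K(t^{-1})$, three applications give $I_{-K^{\#*}}(t)=I_K(t^{-1})$, not $I_K(t)$; hence $I$ does not distinguish $-K$ from $-K^{\#*}$ as you claim---again only $I\!I\!I$ does, via (i).

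The clean fix is the paper's ordering of the argument: first use hypothesis (i) and Lemma~\ref{lem52}(iii) to split the eight knots into the two disjoint quadruples $\{K,K^*,-K^\#,-K^{\#*}\}$ (all with $I\!I\!I$-value $I\!I\!I_K(t)$) and $\{-K,-K^*,K^\#,K^{\#*}\}$ (all with value $I\!I_K(t)-I\!I\!I_K(t)$); within each quadruple the writhe polynomials are $W_K(t)$, $-W_K(t^{-1})$, $-W_K(t)$, $W_K(t^{-1})$, so hypothesis (ii) (which also forces $W_K(t)\ne 0$) splits each quadruple into two pairs, and hypothesis (iii) together with Lemma~\ref{lem52}(i) separates each remaining pair. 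Your ``writhe-first'' ordering can be made to work, but only after correcting the table, and the corrected table forces exactly the extra $I\!I\!I$-comparisons you omitted.
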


\begin{proof}
By Lemma~\ref{lem52}(iii), 
the virtual knots $K, K^*, -K^\#$, and $-K^{\#*}$  
have the same third intersection polynomial $I\!I\!I_K(t)$, 
and $-K, -K^{*}, K^\#$, and $K^{\#*}$ 
have $I\!I_K(t)-I\!I\!I_K(t)$. 
Therefore, by the condition (i), 
it holds that 
$$\{K, K^*, -K^\#, -K^{\#*}\}
\cap\{-K, -K^{*}, K^\#,K^{\#*}\}=\emptyset.$$

Furthermore, by Lemma~\ref{lem51}, 
the first four virtual knots $K, K^*, -K^\#$, and $-K^{\#*}$ 
have the writhe polynomials 
$W_K(t)$, $-W_K(t^{-1})$, $-W_K(t)$, and $W_K(t^{-1})$, 
respectively. 
Since $W_K(t)\ne 0$ follows by the condition (ii), 
we have 
$$\{K,K^{*}\}\cap \{-K^\#,-K^{\#*}\}=\emptyset.$$

Finally, each of the pairs $K$ and $K^{*}$, 
and  $-K^\#$ and $-K^{\#*}$ can be distinguished 
by the condition (iii) and Lemma~\ref{lem52}(i). 
We can prove that the latter four virtual knots 
$-K, -K^{*}, K^\#$, and $K^{\#*}$ are 
mutually distinct similarly. 
\end{proof}

\begin{theorem}\label{thm54}
There are infinitely many virtual knots $K$ such that 
$$K,-K,K^\#,K^*, 
-K^\#, -K^*, K^{\#*}, 
\mbox{ and } -K^{\#*}$$ 
are mutually distinct. 
\end{theorem}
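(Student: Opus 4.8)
The plan is to exhibit an explicit infinite family of virtual knots satisfying the three conditions of Proposition~\ref{prop53}, since that proposition immediately yields the conclusion of the theorem for each such knot. The natural strategy is to start from a single ``seed'' virtual knot $K_0$ whose intersection polynomials and writhe polynomial have already been computed in Appendix~\ref{appA}--\ref{appB} and verified to satisfy conditions (i)--(iii), and then to generate infinitely many examples from it. The obvious mechanism for producing an infinite family is iterated connected sum: set $K_m = K_0 \# K_0 \# \cdots \# K_0$ ($m$ summands), or more flexibly $K_m$ a connected sum of $m$ copies together with some auxiliary factor. One then needs to know how $W_K(t)$, $I_K(t)$, $I\!I_K(t)$, and $I\!I\!I_K(t)$ behave under connected sum. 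Although the detailed study of connected sums is deferred to \cite{HNNS2}, the additivity of the writhe polynomial under connected sum is classical, and the analogous near-additivity of the $f_{pq}(D)$ (hence of the intersection polynomials, up to the correction terms involving $\omega_D$ and $W_K$) should follow from the fact that cycles coming from crossings in different summands are supported in disjoint subsurfaces and therefore have intersection number zero.

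The key steps, in order, are as follows. First I would fix a convenient seed: scanning the appendix tables, choose a knot $K_0$ for which $W_{K_0}(t)$ is not palindromic (so $W_{K_0}(t) \ne W_{K_0}(t^{-1})$, giving condition (ii)), for which $W_{K_0}(t) \ne -W_{K_0}(t^{-1})$ (giving the first alternative in condition (iii) outright), and for which $2I\!I\!I_{K_0}(t) \not\equiv I\!I_{K_0}(t)$ modulo $\overline{W}_{K_0}(t)$ (condition (i)); the knot $4.39$ worked out in Example~\ref{ex34} is a promising candidate, or another knot from the table with a visibly non-palindromic writhe polynomial. Second, I would establish the additivity formulas: $W_{K\#K'}(t) = W_K(t) + W_{K'}(t)$, and likewise $I_{K\#K'}(t) = I_K(t) + I_{K'}(t)$, $I\!I_{K\#K'}(t) = I\!I_K(t) + I\!I_{K'}(t)$; for $I\!I\!I$ one must be slightly careful because the modulus $\overline{W}_{K\#K'}(t) = \overline{W}_K(t) + \overline{W}_{K'}(t)$ also changes, but one still gets $I\!I\!I_{K\#K'}(t) \equiv I\!I\!I_K(t) + I\!I\!I_{K'}(t)$ modulo the new modulus. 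Third, setting $K_m$ to be the $m$-fold connected sum of $K_0$, these formulas give $W_{K_m}(t) = mW_{K_0}(t)$, $I\!I_{K_m}(t) = m I\!I_{K_0}(t)$, and $I\!I\!I_{K_m}(t) \equiv m I\!I\!I_{K_0}(t)$ modulo $m\overline{W}_{K_0}(t)$. Fourth, I would check that conditions (i)--(iii) are inherited: (ii) and the writhe part of (iii) are immediate since scaling by the positive integer $m$ preserves non-palindromicity and preserves $W \ne -W(t^{-1})$; for (i) one notes $2I\!I\!I_{K_m} - I\!I_{K_m} \equiv m(2I\!I\!I_{K_0} - I\!I_{K_0})$ modulo $m\overline{W}_{K_0}$, and this is nonzero in $\Z[t,t^{-1}]/(m\overline{W}_{K_0})$ precisely because $2I\!I\!I_{K_0} - I\!I_{K_0}$ is not an integer multiple of $\overline{W}_{K_0}$ (the factor $m$ divides through cleanly). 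Fifth, I would confirm the $K_m$ are genuinely infinitely many distinct virtual knots — e.g. because $\deg W_{K_m}$, or the span of $I\!I_{K_m}$, is unbounded, or simply because the crossing numbers grow by Theorem~\ref{thm35}'s bound; in any case $W_{K_m}(t) = mW_{K_0}(t)$ are pairwise distinct. Finally, applying Proposition~\ref{prop53} to each $K_m$ yields eight mutually distinct knots for each $m$, completing the proof.

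The main obstacle I anticipate is the second step: rigorously justifying the additivity of $I$, $I\!I$, and $I\!I\!I$ under connected sum without circularity, given that the systematic treatment is relegated to the companion paper \cite{HNNS2}. The subtlety is that a connected sum of virtual knots is not canonically defined — it depends on a choice of arcs — so one should either cite the relevant lemma from \cite{HNNS2} as forthcoming, or give a self-contained argument: realize $K_0$ by a diagram $D_0$ on $\Sigma$, take $D_m$ to be $m$ disjoint copies of $D_0$ on a connected-sum surface $\Sigma \# \cdots \# \Sigma$ joined by thin bands, observe that any cycle $\gamma_i$ or $\overline{\gamma}_i$ at a crossing in the $k$-th copy is homologous to a cycle supported in the $k$-th handle-block plus possibly a multiple of the class of the connecting curve, and compute that intersection numbers between cycles from distinct copies vanish because the homology intersection pairing is block-diagonal with respect to the connected-sum decomposition. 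This makes $f_{pq}(D_m) = \sum_{k=1}^m f_{pq}(D_0^{(k)}) = m f_{pq}(D_0)$ after subtracting the diagonal-block cross terms that cancel, and since $\omega_{D_m} = m\omega_{D_0}$ the correction terms scale correctly, yielding the stated additivity. If even this is deemed too heavy to include here, an alternative is to avoid connected sums entirely and instead find an explicit one-parameter family in Green's spirit — e.g. inserting $k$ extra ``kinks'' of a controlled type into a fixed diagram — but the connected-sum route is cleanest and I would pursue it first.
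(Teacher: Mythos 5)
Your overall strategy---exhibit an explicit infinite family and verify the three hypotheses of Proposition~\ref{prop53} for each member---is exactly the paper's, but the mechanism you use to generate the family contains a genuine gap. The claimed additivity of $I_K(t)$, $I\!I_K(t)$, and $I\!I\!I_K(t)$ under connected sum is not established by your argument, and the key assertion behind it (``intersection numbers between cycles from distinct copies vanish because the intersection pairing is block-diagonal'') is false. The cycles $\gamma_i,\overline{\gamma}_i$ at a crossing $c_i$ of the summand $D_1$ are \emph{not} supported in the $D_1$-block: since $\gamma_i+\overline{\gamma}_i=\gamma_D$, exactly one of them traverses the entire second summand, so $\gamma_i=\gamma_i'+a_i\gamma_{D_2}$ with $\gamma_i'$ in the first block and $a_i\in\{0,1\}$, and similarly $\overline{\gamma}_j=\overline{\gamma}_j'+b_j\gamma_{D_1}$ for $c_j$ in $D_2$. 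Using the block-diagonality you invoke, one computes for $c_i$ in $D_1$ and $c_j$ in $D_2$ that
$$\gamma_i\cdot\overline{\gamma}_j \;=\; b_j\,(\gamma_i\cdot\overline{\gamma}_i)+a_i\,(\gamma_j\cdot\overline{\gamma}_j),$$
a combination of the indices of the two crossings, which is nonzero in general. Hence the off-diagonal blocks of the double sums $f_{pq}$ contribute nontrivially, $f_{pq}(D_1\#D_2)\ne f_{pq}(D_1)+f_{pq}(D_2)$, and even after subtracting the correction terms one picks up extra contributions such as $\omega_{D_1}W_{K_2}(t)+\omega_{D_2}W_{K_1}(t)$; so the formulas $I\!I_{K_m}=m\,I\!I_{K_0}$ etc.\ on which your steps three through five rest are not available. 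This is precisely why the behaviour under connected sum is deferred to the companion paper \cite{HNNS2}; only the writhe polynomial (a single, not a double, sum over crossings) is additive.

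The paper takes the route you mention only as a fallback: it writes down an explicit one-parameter family of Gauss diagrams $G_n$, computes $W_{K_n}$, $I_{K_n}$, $I\!I_{K_n}$, and $I\!I\!I_{K_n}$ in closed form (with $\overline{W}_{K_n}(t)=0$, so that condition (i) becomes a plain inequality of polynomials, and with condition (iii) verified via $I_{K_n}(t)\ne I_{K_n}(t^{-1})$ since this family happens to satisfy $W_{K_n}(t)=-W_{K_n}(t^{-1})$), and then distinguishes the $K_n$ from one another by ${\rm deg}\,W_{K_n}(t)=n+1$. To repair your proof you would need either to import the actual connected-sum formula, with its correction terms, from \cite{HNNS2} and check that conditions (i)--(iii) survive it, or to abandon the connected sum in favour of a concrete family of Gauss diagrams whose invariants you compute directly, as the paper does.
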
 

\begin{proof}
Let $K_n$ $(n\geq 1)$ be the virtual knot presented 
by the Gauss diagram $G_n$ as shown in Figure~\ref{th54}.
It holds that 
\begin{eqnarray*}
W_{K_n}(t)&=&t^{n+1}-(n+1)t +(n+1)t^{-1}-t^{-n-1},\\
I_{K_n}(t)&=&-t^{n+2}-nt^{n+1}+t-(2n-1)
-(n+1)t^{-n-1}+2\sum_{i=1}^{n}(t^i+t^{-i}),\\
I\!I_{K_n}(t)&=&
-(t^{2n+2}+t^{-2n-2})-(t^{n+1}+t^{-n-1})-(2n+1)(t^{n}+t^{-n})\\
&&-n(n+1)(t^2+t^{-2})-(2n+3)(t+t^{-1})+2(n^2+n+2)\\
&&+4 \sum_{i=1}^{n+1}(t^i+t^{-i}), \mbox{ and }\\
I\!I\!I_{K_n}(t)&=&
-(t^{n+1}+t^{-n-1})-(n+1)(t+t^{-1})
+2\sum_{i=1}^{n+1}(t^i+t^{-i})-2n, 
\end{eqnarray*}
where we have $\overline{W}_{K_n}(t)=0$. 
Since these invariants of $K_n$ satisfy 
the conditions (i), (ii), and (iii) $I_{K_n}(t)\ne I_{K_n}(t^{-1})$ 
in Proposition~\ref{prop53}, 
the eight kinds of virtual knots associated with $K_n$ 
are mutually distinct. 
Furthermore $K_n\ne K_m$ $(n\ne m)$ holds 
by ${\rm deg}W_{K_n}(t)=n+1$. 
We remark that $K_n$ satisfies $W_{K_n}(t)=-W_{K_n}(t^{-1})$. 
\end{proof}

\begin{figure}[htb]
\begin{center}
\includegraphics[bb = 0 0 143.97 108.38]{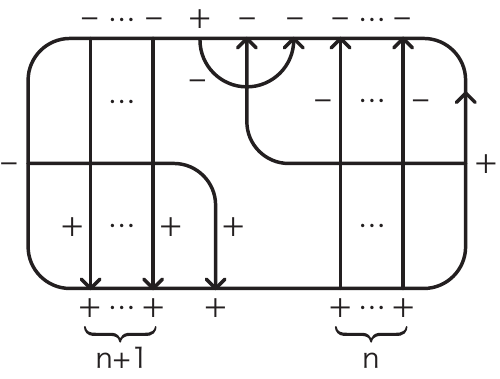}
\caption{}
\label{th54}
\end{center}
\end{figure}

\begin{example}\label{ex55}
We can construct an infinite family of virtual knots $K$ 
satisfying the conditions (i), (ii), and (iii) $W_K(t)\ne -W_K(t^{-1})$ 
in Proposition~\ref{prop53}. 

Let $K_n'$ $(n\geq3)$ be the virtual knot presented 
by the Gauss diagram $G_n'$ as shown in Figure \ref{Eg55}. 
We have 
\begin{eqnarray*}
W_{K_n'}(t)&=&-t^{n-1}+nt-n+t^{-1},\\
I_{K_n'}(t)&=& -t^n+(n-2)t^{n-1}-2\sum_{i=1}^{n-2}t^{i}+(2n-1)-nt^{-1}, \\
I\!I_{K_n'}(t)&=& -(t^{n}+t^{-n})+(n-1)(t^{n-1}+t^{-n+1})\\
& & -2\sum_{i=1}^{n-2}(t^{i}+t^{-i})-(n^2-n+1)(t+t^{-1})+2n^2-2, \mbox{ and}\\
I\!I\!I_{K_n'}(t)&\equiv& 
-\sum_{i=1}^{n-2}(t^i+t^{-i})-(t+t^{-1})+2n-2
\quad \mbox{(mod~$\overline{W}_{K_n'}(t)$)}, 
\end{eqnarray*}
where 
$\overline{W}_{K_n'}(t)=-(t^{n-1}+t^{-n+1})+(n+1)(t+t^{-1})-2n$. 
Since these invariants of $K_n'$ satisfy 
the conditions (i), (ii), and (iii) $W_{K_n'}(t)\ne -W_{K_n'}(t^{-1})$ 
in Proposition~\ref{prop53}, 
the eight kinds of virtual knots associated with $K_n'$ 
are mutually distinct. 
Furthermore 
$K_n'\ne K_m'$ $(n\ne m)$ holds
by ${\rm deg}W_{K_n'}(t)=n-1$. 
We remark that $K_n'$ satisfies (iii) $I_{K_n'}(t)\neq I_{K_n'}(t^{-1})$
in Proposition~\ref{prop53}.
\hfill$\Box$
\end{example}

\begin{figure}[htb]
\begin{center}
\includegraphics[bb = 0 0 107.61 87.45]{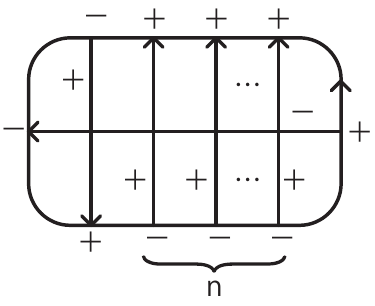}
\caption{}
\label{Eg55}
\end{center}
\end{figure}


\section{Real and virtual crossing numbers}\label{sec5}

For a Laurent polynomial $f(t)$, 
let ${\rm deg}f(t)$ 
denote the maximal degree of $f(t)$. 
The writhe polynomial $W_K(t)$ gives a lower bound 
of the crossing number ${\rm c}(K)$ as follows. 

\begin{lemma}[\cite{ST}]\label{lem56} 
Any non-trivial virtual knot $K$ 
satisfies ${\rm c}(K)\geq {\rm deg}W_K(t)+1$. 
\end{lemma}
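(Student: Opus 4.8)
The plan is to derive the inequality from a degree bound on the writhe polynomial computed from a \emph{minimal} diagram. First I would fix a diagram $D$ of $K$ on some surface $\Sigma$ that realizes the crossing number, so that $D$ has exactly $n={\rm c}(K)$ crossings $c_1,\dots,c_n$; let $G$ be the Gauss diagram of $D$. By Definition~\ref{def21}, $W_K(t)=W_D(t)=\sum_{i=1}^n\varepsilon_i\bigl(t^{a_i}-1\bigr)=\sum_{i=1}^n\varepsilon_it^{a_i}-\omega_D$, where $a_i=\gamma_i\cdot\overline{\gamma}_i$ denotes the index of $c_i$. Since $K$ is non-trivial we have $n\ge 1$, so the constant term $-\omega_D$ contributes only in degree $0\le n-1$; hence it is enough to show that every index satisfies $|a_i|\le n-1$, for then every monomial appearing in $W_D(t)$ has exponent in the range $[-(n-1),\,n-1]$ and therefore ${\rm deg}\,W_K(t)\le n-1={\rm c}(K)-1$.

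The core step is the estimate $|a_i|\le n-1$, and here I would argue on the Gauss diagram $G$. The chord $c_i$ cuts the circle $S^1$ into two arcs; taking $\alpha$ to be the one representing $\gamma_i$, Lemma~\ref{lem31} gives $a_i=\gamma_i\cdot\overline{\gamma}_i=\sum_{x\in P(\gamma_i)}{\rm sgn}(x)$, the sum being over the endpoints of the chords other than $c_i$ that lie inside $\gamma_i$. Now I would examine the total contribution of a single chord $c_j$ with $j\ne i$: recalling (from the paragraph preceding this, see Figure~\ref{H1}) that the two endpoints of $c_j$ carry the opposite signs $-\varepsilon_j$ and $\varepsilon_j$, I see that $c_j$ contributes $0$ when both of its endpoints lie in $\gamma_i$, contributes $0$ when both lie in $\overline{\gamma}_i$, and contributes $\pm 1$ exactly when its two endpoints are separated by the endpoints of $c_i$. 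Summing over the $n-1$ chords $c_j$ with $j\ne i$ then yields $|a_i|\le n-1$.

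Combining the two steps gives ${\rm deg}\,W_K(t)\le{\rm c}(K)-1$, which is the assertion. I do not expect a genuine difficulty here; the only points needing attention are that the writhe polynomial may be read off from \emph{any} diagram while the bound must come from a minimal one (so that $n$ is exactly ${\rm c}(K)$), and the convention for ${\rm deg}$ of the zero polynomial, which is the reason the hypothesis restricts to non-trivial knots (where ${\rm c}(K)\ge 1$ and the bound is non-vacuous).
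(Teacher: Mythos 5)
Your proposal is correct. The paper itself states this lemma as a quotation from \cite{ST} and gives no proof, but the argument you give --- computing $W_K(t)$ from a minimal diagram and bounding each index by $|a_i|\leq n-1$, since a chord of the Gauss diagram contributes $\pm1$ to $\gamma_i\cdot\overline{\gamma}_i$ only when it is linked with $c_i$ and $0$ otherwise --- is the standard proof and is exactly the estimate the authors re-derive in the analogous form $S(\alpha,\beta)\leq c(D)-2$ in their proof of Proposition~\ref{prop57}.
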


We remark that the minimal degree of $W_K(t)$ 
also gives a lower bound of ${\rm c}(K)={\rm c}(-K)$ 
by the equation to $W_K(t^{-1})=W_{-K}(t)$. 
The {\it span} of $W_K(t)$ is 
the difference of the maximal and minimal degrees of $W_K(t)$, 
and denoted by ${\rm span} W_K(t)$. 
Then Lemma~\ref{lem56} induces a weaker inequation 
$${\rm c}(K)\geq\frac{1}{2}{\rm span}W_K(t)+1$$ 
immediately. 
The intersection polynomials also gives 
lower bounds of ${\rm c}(K)$ as follows. 
Here, ${\rm deg}{\III}_K(t)$ denotes 
the maximal number of ${\rm deg}f(t)$ 
for all $f(t)$ with $f(t)\equiv I\!I\!I_K(t)$ 
$({\rm mod}~\overline{W}_K(t))$. 

\begin{proposition}\label{prop57}
Let $K$ be a non-trivial virtual knot. 
\begin{itemize}
\item[{\rm (i)}] 
${\rm c}(K)\geq {\rm deg}I_K(t)+1$. 
\item[{\rm (ii)}] 
${\rm c}(K)\geq {\rm deg}I\!I_K(t)+1$. 
\item[{\rm (iii)}] 
${\rm c}(K)\geq {\rm deg}{\III}_K(t)+1$. 
\end{itemize}
\end{proposition}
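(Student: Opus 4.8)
The plan is to bound each of the quantities $\deg I_K(t)$, $\deg I\!I_K(t)$, and $\deg I\!I\!I_K(t)$ by $\mathrm{c}(K)-1$, working from a minimal diagram. Let $D$ be a diagram of $K$ on a surface $\Sigma$ realizing the crossing number, so $D$ has exactly $n=\mathrm{c}(K)$ crossings $c_1,\dots,c_n$. The key observation is that every intersection number of the form $\gamma_i\cdot\overline\gamma_j$, $\gamma_i\cdot\gamma_j$, or $\overline\gamma_i\cdot\overline\gamma_j$ can be computed from the Gauss diagram $G$ of $D$ via Lemmas~\ref{lem31} and \ref{lem32}, and in all cases it is a sum of signs (possibly corrected by $\pm\frac12(\varepsilon_i\pm\varepsilon_j)\in\{0,\pm1\}$) indexed by endpoints of the $n$ chords of $G$. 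Since each chord contributes at most one relevant endpoint to such a sum and the linking correction adds at most $1$, one gets a bound of the shape $|\gamma_i\cdot\overline\gamma_j|\le n-1$, and similarly for the other two pairings; the $-1$ comes from the fact that the two chords $c_i,c_j$ involved cannot both contribute, or a careful count of endpoints lying strictly between the relevant pair of marked points. The main work is pinning down this elementary estimate precisely.

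Granting $|\gamma_i\cdot\overline\gamma_j|,\ |\gamma_i\cdot\gamma_j|,\ |\overline\gamma_i\cdot\overline\gamma_j|\ \le\ n-1$ for all $i,j$ (and in particular $|\gamma_i\cdot\overline\gamma_i|\le n-1$, which already recovers Lemma~\ref{lem56}), the degree bounds follow quickly. Each monomial $t^{\gamma_i\cdot\overline\gamma_j}$ appearing in $f_{01}(D;t)$ has exponent of absolute value at most $n-1$, so $\deg f_{01}(D;t)\le n-1$; likewise $\deg W_K(t)\le n-1$, hence $\deg I_D(t)=\deg\bigl(f_{01}(D;t)-\omega_D W_K(t)\bigr)\le n-1$, giving (i). The same reasoning applied to $f_{00}$, $f_{11}$, and $\overline W_K$ yields $\deg I\!I_K(t)\le n-1$, proving (ii). For (iii), note that $f_{00}(D;t)$ itself has degree at most $n-1$, so $I\!I\!I_K(t)$ has a representative of degree at most $n-1$; but $\deg I\!I\!I_K(t)$ is defined as the \emph{maximum} degree over all representatives mod $\overline W_K(t)$, so I must rule out that some other representative $f_{00}(D;t)+m\,\overline W_K(t)$ has strictly larger degree. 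This is where a little care is needed: if $\overline W_K(t)\ne 0$ then $\deg\overline W_K(t)\le n-1$ as well, and adding a multiple of it to a degree-$(n-1)$ Laurent polynomial can only raise the degree if there is exact cancellation forcing it down first — but then the sum again has degree $\le n-1$. More cleanly, any representative $f$ satisfies $\deg f\le\max\{\deg f_{00}(D;t),\deg\overline W_K(t)\}\le n-1$ unless the leading terms conspire, and since every term in sight has exponent in $[-(n-1),n-1]$, no exponent outside that range can ever be created. If $\overline W_K(t)=0$ the representative is unique and there is nothing to check.

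The hard part, and the only real content, is the uniform bound $|\gamma_i\cdot\overline\gamma_j|\le n-1$ (and its two companions) from a diagram with $n$ crossings. I would isolate this as the crux: fix chords $c_i$ and $c_j$ of the Gauss diagram $G$ and the relevant arcs (e.g.\ $\gamma_i$ and $\overline\gamma_j$); by Lemma~\ref{lem32} the intersection number is $S(\gamma_i,\overline\gamma_j)$ plus a correction in $\{0,\pm1\}$, and $S(\gamma_i,\overline\gamma_j)=\sum_{x\in P(\gamma_i),\,\tau(x)\in P(\overline\gamma_j)}\mathrm{sgn}(x)$ is a signed count over a subset of the $2n$ endpoints. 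The endpoints of $c_i$ and $c_j$ themselves are never counted, and for each of the remaining $n-2$ chords at most one endpoint can lie in $P(\gamma_i)$ with its partner in $P(\overline\gamma_j)$; hence $|S(\gamma_i,\overline\gamma_j)|\le n-2$, and adding the correction gives $\le n-1$. When $i=j$ the relevant quantity is $\gamma_i\cdot\overline\gamma_i=\sum_{x\in P(\gamma_i)}\mathrm{sgn}(x)$ by Lemma~\ref{lem31}, a sum over at most $n-1$ endpoints (the two endpoints of $c_i$ are excluded, leaving $2n-2$ endpoints among $n-1$ chords, at most one per chord by the linking criterion), so $|\gamma_i\cdot\overline\gamma_i|\le n-1$ as well. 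Once this combinatorial lemma is in hand, assembling the three inequalities is routine, so I would spend the bulk of the write-up making the endpoint count airtight and then dispatch (i)--(iii) in a few lines each.
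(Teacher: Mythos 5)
Your argument is correct and follows essentially the same route as the paper: take a minimal diagram, bound each exponent $\alpha\cdot\beta$ by $S(\alpha,\beta)$ plus a correction in $\{0,\pm1\}$ via Lemma~\ref{lem32}, observe that only the $n-2$ chords other than $c_i,c_j$ can contribute to $S(\alpha,\beta)$, and conclude ${\rm deg}f_{pq}(D)\leq {\rm c}(K)-1$ together with ${\rm deg}W_K(t)\leq {\rm c}(K)-1$. Your extra care with the diagonal terms $\gamma_i\cdot\overline{\gamma}_i$ and with the choice of representative modulo $\overline{W}_K(t)$ in part (iii) are correct refinements of the same argument rather than a different approach.
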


\begin{proof}
Assume that a diagram $D$ of $K$ 
satisfies $c(D)={\rm c}(K)$. 
Since $K$ is non-trivial, 
it holds that $c(D)\geq 2$. 
For $(\alpha,\beta)=(\gamma_i,\overline{\gamma}_j)$, 
$(\gamma_i,\gamma_j)$, and 
$(\overline{\gamma}_i,\overline{\gamma}_j)$ 
with $i\ne j$, 
the intersection number $\alpha\cdot\beta$ 
is equal to 
$S(\alpha,\beta)-1$, $S(\alpha,\beta)$, 
or $S(\alpha,\beta)+1$ by Lemma~\ref{lem32} 
so that we obtain 
$\alpha\cdot\beta\leq S(\alpha,\beta)+1$. 

Since there are $c(D)-2$ chords other than $c_i$ and $c_j$
in the Gauss diagram of $D$, 
we have $S(\alpha,\beta)\leq c(D)-2$. 
Therefore it holds that 
$${\rm deg}f_{pq}(D)\leq c(D)-1={\rm c}(K)-1$$ 
for $(p,q)=(0,1), (0,0)$, and $(1,1)$. 
Since ${\rm deg}W_K(t)\leq {\rm c}(K)-1$, 
we have the conclusion. 
\end{proof}

As well as a diagram on $\Sigma$ or 
a Gauss diagram, 
a virtual knot is also presented by 
a {\it virtual diagram in ${\R}^2$}. 
It is an immersed circle in ${\R}^2$ 
with real and virtual crossings \cite{K1}. 
Here, the real crossings correspond to 
the crossings on $\Sigma$, 
and the virtual crossings are surrounded by 
small circles. 
The {\it virtual crossing number} of a virtual knot $K$ 
is the minimal number of virtual crossings 
for all virtual diagrams of $K$, 
and denoted by ${\rm vc}(K)$.

The intersection polynomials 
are also calculated from a virtual diagram. 
For example, we consider the virtual diagram 
with three real crossings $c_1$, $c_2$, and $c_3$ 
and two virtual crossings 
as shown in the leftmost of Figure~\ref{3-4-ori}, 
which presents the virtual knot $K=3.4$. 
To calculate $\gamma_1\cdot\overline{\gamma}_2$, 
we draw the curves $\gamma_1$ and $\overline{\gamma}_2$ 
equipped with virtual crossings, 
and then take the sum of signs of two intersections 
with ignoring virtual crossings 
to obtain $\gamma_1\cdot\overline{\gamma}_2=2$. 
See the second from the left in the figure. 
Similarly we obtain 
$\gamma_1\cdot\gamma_2=0$ and 
$\overline{\gamma}_1\cdot\overline{\gamma}_2=-1$ 
as shown in the third and fourth from the left.

\begin{figure}[htb]
\begin{center}
\includegraphics[bb = 0 0 310.63 103.13]{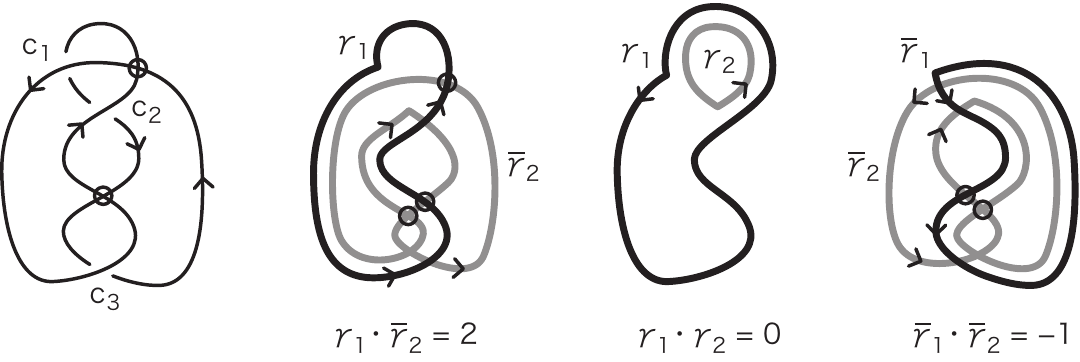}
\caption{}
\label{3-4-ori}
\end{center}
\end{figure}

The writhe polynomial $W_K(t)$ gives a lower bound 
of the virtual crossing number ${\rm vc}(K)$ as follows.

\begin{lemma}[\cite{ST}]\label{lem58} 
Any non-trivial virtual knot $K$ 
satisfies ${\rm vc}(K)\geq {\rm deg}W_K(t)$. 
\end{lemma}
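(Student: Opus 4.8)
The plan is to show that, in any virtual diagram of $K$ in $\R^2$, the index $\gamma_i\cdot\overline{\gamma}_i$ of every real crossing $c_i$ has absolute value at most the number of virtual crossings, and then to read off the degree bound from $W_K(t)=\sum_{i=1}^n\varepsilon_i(t^{\gamma_i\cdot\overline{\gamma}_i}-1)$.

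First I would fix a virtual diagram $D$ of $K$ in $\R^2$ with real crossings $c_1,\dots,c_n$ and exactly ${\rm vc}(K)=v$ virtual crossings, together with the closed surface $\Sigma$ carrying the associated diagram. The key structural point is that $\Sigma$ may be built from a regular neighborhood of $D$ in $\R^2$ by modifying it only inside small neighborhoods of the $v$ virtual crossings (so that there the two strands of $\gamma_D$ become disjoint) and then capping off the resulting boundary circles by disks. Consequently, near each real crossing $c_j$ and near every point of $\gamma_D$ lying away from the virtual crossings, $\Sigma$ is locally identified with $\R^2$, compatibly with orientations.

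Next, I would fix $i$, regard $\gamma_i$ and $\overline{\gamma}_i$ as loops in $\R^2$ based at $c_i$, and perturb them into transverse position. Since $H_1(\R^2)=0$, the loop $\gamma_i$ bounds, so the signed count of planar intersection points of $\gamma_i$ with the cycle $\overline{\gamma}_i$ is $0$. Grouping these points into those lying in a small neighborhood of $c_i$ (contribution $L$), those at the real crossings $c_j$ with $j\ne i$ (contribution $R$), and those at the virtual crossings (contribution $V$), we get $L+R+V=0$. On the other hand, the homological intersection number $\gamma_i\cdot\overline{\gamma}_i$ on $\Sigma$ receives contributions only from crossings that survive on $\Sigma$, namely from $c_i$ and the $c_j$; and since the sign of a transverse intersection is a purely local quantity and $\Sigma$ agrees with $\R^2$ near those crossings, those contributions are again exactly $L$ and $R$. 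Hence $\gamma_i\cdot\overline{\gamma}_i=L+R=-V$. (One in fact checks $L=0$, because the splitting of $\gamma_D$ at $c_i$ into $\gamma_i$ and $\overline{\gamma}_i$ is the oriented smoothing, but this is not needed.) Finally, since $c_i$ is not a virtual crossing, at each virtual crossing the two local strands of $\gamma_D$ each lie entirely in $\gamma_i$ or entirely in $\overline{\gamma}_i$, so that crossing contributes $\pm1$ to $V$ when the two strands are split between $\gamma_i$ and $\overline{\gamma}_i$ and $0$ otherwise; therefore $|V|\le v$, and thus $|\gamma_i\cdot\overline{\gamma}_i|\le v$.

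Running this for every $i$, all exponents occurring in $W_K(t)=\sum_{i=1}^n\varepsilon_i t^{\gamma_i\cdot\overline{\gamma}_i}-\omega_D$ lie in $[-v,v]$, so ${\rm deg}W_K(t)\le v={\rm vc}(K)$. I expect the main obstacle to be the bookkeeping behind the identity $\gamma_i\cdot\overline{\gamma}_i=-V$: one must commit to a definite model of the surface $\Sigma$ attached to a planar virtual diagram, verify that the homological intersection pairing on $\Sigma$ is genuinely computed as a local sum over the real crossings in the stated way, and treat the perturbation of $\gamma_i$ and $\overline{\gamma}_i$ near $c_i$ with care, so that the local term $L$ is literally the same quantity in the planar and the surface computations. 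Once this setup is in place, the inequality $|V|\le v$ and the passage to ${\rm deg}W_K(t)$ are immediate.
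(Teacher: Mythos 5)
Your argument is correct, and it is essentially the method the paper itself uses: the paper does not reprove Lemma~\ref{lem58} (it cites \cite{ST}), but its proof of the analogous Proposition~\ref{prop59} runs on exactly your idea --- the total planar intersection number of two cycles is zero, so the part carried by the real crossings (which is the homological intersection number on $\Sigma$) equals minus the part carried by the virtual crossings, and each virtual crossing contributes at most $\pm 1$. Your specialization to $\alpha=\gamma_i$, $\beta=\overline{\gamma}_i$ (where each strand at a virtual crossing lies in exactly one of the two arcs, so only the paper's case (i) occurs) cleanly yields $|\gamma_i\cdot\overline{\gamma}_i|\leq {\rm vc}(K)$ and hence the degree bound.
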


We remark that 
Lemma~\ref{lem58} induces a weaker inequation 
$${\rm vc}(K)\geq\frac{1}{2}{\rm span}W_K(t)$$ 
immediately, 
which is proved in \cite{Mel}. 
The intersection polynomials also gives 
lower bounds of ${\rm vc}(K)$ as follows. 

\begin{proposition}\label{prop59}
Let $K$ be a virtual knot. 
\begin{itemize}
\item[{\rm (i)}] 
${\rm vc}(K)\geq {\rm deg}I_K(t)$. 
\item[{\rm (ii)}] 
${\rm vc}(K)\geq {\rm deg}I\!I_K(t)$. 
\item[{\rm (iii)}] 
${\rm vc}(K)\geq {\rm deg}{\III}_K(t)$. 
\end{itemize}
\end{proposition}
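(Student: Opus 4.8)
The plan is to adapt the proof of Proposition~\ref{prop57} to the setting of virtual diagrams in $\R^2$. Recall that from a virtual diagram $V$ of $K$ with real crossings $c_1,\dots,c_n$, we may form a Gauss diagram $G$ of $V$ in exactly the same way as from a diagram on $\Sigma$: the chords record the real crossings, with orientations and signs inherited as before. The key observation is that the intersection numbers $\gamma_i\cdot\overline{\gamma}_j$, $\gamma_i\cdot\gamma_j$, and $\overline{\gamma}_i\cdot\overline{\gamma}_j$ computed from the surface diagram agree with the corresponding quantities read off from $G$ via Lemmas~\ref{lem31} and~\ref{lem32}; and $G$ depends only on the combinatorics of the real crossings, not on whether $K$ is presented on $\Sigma$ or in $\R^2$. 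So $f_{pq}(V;t)$, $W_K(t)$, and hence $I_K(t)$, $I\!I_K(t)$, and ${\III}_K(t)$ are all recoverable from $G$.

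First I would fix a virtual diagram $V$ of $K$ realizing ${\rm vc}(K)$, so it has ${\rm vc}(K)$ virtual crossings and some number $n$ of real crossings. The goal is the bound ${\rm deg}f_{pq}(V;t)\le {\rm vc}(K)$ for $(p,q)=(0,1),(0,0),(1,1)$, together with ${\rm deg}W_K(t)\le {\rm vc}(K)$, which by the definitions $I_D=f_{01}-\omega_D W_K$, $I\!I_D=f_{00}+f_{11}-\omega_D\overline{W}_K$ gives (i)–(iii) immediately; part (iii) uses that ${\rm deg}{\III}_K(t)$ is by definition realized by some representative, and $f_{00}(V;t)$ is one such. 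The exponents appearing in $f_{pq}(V;t)$ are intersection numbers $\alpha\cdot\beta$ where $\alpha,\beta$ are arcs of $S^1$ associated to two chords $c_i,c_j$ of $G$. By Lemma~\ref{lem32}, $\alpha\cdot\beta\le S(\alpha,\beta)+1$, and $S(\alpha,\beta)$ is a sum of signs of at most $n-2$ endpoints (those of chords other than $c_i,c_j$), so the exponent is at most $n-1$. But this bounds things by $n-1$, not by ${\rm vc}(K)$ — so the essential input must be a comparison between $n$ and ${\rm vc}(K)$.

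The hard part will be relating the number of real crossings to the number of virtual crossings along a given curve $\alpha\subset S^1$. Here is the geometric point I would make precise: an arc $\alpha$ of $S^1$ traces out a closed curve on $\Sigma$ (or a closed virtual-arc in $\R^2$), and its intersection number with another such curve $\beta$ is bounded by the number of transverse double points one is forced to create. When reading these curves off the virtual diagram $V$, each contribution to $\alpha\cdot\beta$ comes from a crossing that is ``real'' from the point of view of the abstract curve but may be drawn as a virtual crossing of $V$; the count of such forced intersections is controlled by ${\rm vc}(K)$ rather than by the total real crossing number. Concretely, I expect one shows that $|\alpha\cdot\beta|\le {\rm vc}(K)$ by exhibiting, for the specific sub-curves $\gamma_i,\overline\gamma_i,\gamma_j,\overline\gamma_j$, a representative in which every intersection point between them sits at (the projection of) a virtual crossing of $V$ — this is exactly the calculation method illustrated in Figure~\ref{3-4-ori}, where $\gamma_1\cdot\overline\gamma_2$ is computed by counting signed intersections while ignoring virtual crossings, and these intersections can be arranged to occur near the virtual crossings of $V$.

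Once the inequality ${\rm deg}f_{pq}(V;t)\le {\rm vc}(K)$ is established for the three relevant pairs, and ${\rm deg}W_K(t)\le {\rm vc}(K)$ from Lemma~\ref{lem58} (or its proof, which is of the same nature), the three stated bounds follow by taking maximal degrees in the defining formulas for $I_K$, $I\!I_K$, ${\III}_K$ and noting that $\omega_D$ is a constant. I would present the main lemma — that every exponent occurring in $f_{pq}(V;t)$ has absolute value at most ${\rm vc}(K)$ — as the heart of the argument, with the deduction of (i)–(iii) being a one-line consequence exactly parallel to the end of the proof of Proposition~\ref{prop57}.
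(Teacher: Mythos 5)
Your setup is right: the statement does reduce to showing that every exponent $\alpha\cdot\beta$ occurring in $f_{pq}(V;t)$ for $(p,q)=(0,1),(0,0),(1,1)$, and in $W_K(t)$, is bounded in absolute value by the number of virtual crossings of a minimal virtual diagram $V$. But the heart of the argument --- the proof of that bound --- is exactly what your proposal leaves as a conjecture (``I expect one shows\dots''), and the mechanism you guess at is not the right one. There is no needed ``comparison between $n$ and ${\rm vc}(K)$'' (the number of real crossings plays no role in this bound), and one does not look for representatives of $\alpha$ and $\beta$ whose intersections sit at the virtual crossings; on the contrary, as the computation in Figure~\ref{3-4-ori} shows, the intersections that compute $\alpha\cdot\beta$ are precisely the ones at \emph{real} crossings, the virtual ones being discarded.

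The missing idea is a complementary counting argument in the plane. Draw $\alpha$ and $\beta$ as closed curves in ${\R}^2$ following the virtual diagram; since two closed curves in ${\R}^2$ have total algebraic intersection number zero, the signed count of their intersection points lying at virtual crossings equals $-\,\alpha\cdot\beta$, the negative of the signed count at real crossings. It then remains to check, by a local case analysis at a virtual crossing $v$ where two short paths $\lambda,\lambda'$ of $V$ cross (Figure~\ref{virtual}), that each virtual crossing contributes a net signed intersection of absolute value at most $1$: when $\alpha$ and $\beta$ each contain both $\lambda$ and $\lambda'$, the two resulting intersection points have opposite signs and cancel, and in the remaining cases there is a single intersection point. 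Hence $|\alpha\cdot\beta|\leq{\rm vc}(K)$, and (i)--(iii) follow as you indicate; for (iii) the correct justification is not that the maximal degree is realized by $f_{00}(V;t)$, but that every representative of ${\III}_K(t)$ has the form $f_{00}(V;t)+m\overline{W}_K(t)$ and both summands obey the degree bound. Without the vanishing of the planar intersection number and the cancellation check at a virtual crossing, your sketch does not close.
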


\begin{proof}
Let $D$ be a virtual diagram of $K$ in ${\R}^2$, 
and $\alpha$ and $\beta$ cycles on $D$ 
with corners at (possibly the same) real crossings of $D$. 
By a slight perturbation of $\beta$ if necessary, 
we may assume that $\alpha$ and $\beta$ 
intersect in a finite number of double points 
near real and virtual crossings of $D$ 
as explained as above. 
By Lemma~\ref{lem58}, it is sufficient to prove that 
if the intersection number 
restricted to the real crossings 
between $\alpha$ and $\beta$ in ${\R}^2$ 
is equal to $n$, 
then the number of virtual crossings of $D$ 
is greater than or equal to $|n|$. 

Since the total intersection number between 
$\alpha$ and $\beta$ in ${\R}^2$ is equal to zero, 
the intersection number 
restricted to the virtual crossings between $\alpha$ and $\beta$ 
is equal to $-n$. 

Let $v$ be a virtual crossing of $D$ where 
two short paths $\lambda$ and $\lambda'\subset D$ intersect. 
If $\alpha$ and $\beta$ intersect in virtual crossings near $v$, 
then 
there are four cases as follows. 
\begin{itemize}
\item[(i)] 
$\alpha\supset \lambda$, $\alpha\not\supset\lambda'$, 
and $\beta\not\supset \lambda$, $\beta\supset\lambda'$. 
\item[(ii)] 
$\alpha\supset \lambda,\lambda'$, 
and $\beta\not\supset \lambda$, $\beta\supset\lambda'$. 
\item[(iii)] 
$\alpha\supset \lambda$, $\alpha\not\supset\lambda'$, 
and $\beta\supset\lambda,\lambda'$. 
\item[(iv)] 
$\alpha\supset \lambda,\lambda'$ and 
$\beta\supset\lambda,\lambda'$. 
\end{itemize}
See Figure~\ref{virtual}. 

\begin{center}
\begin{figure}[h]
\includegraphics[bb = 0 0 312.92 67.6]{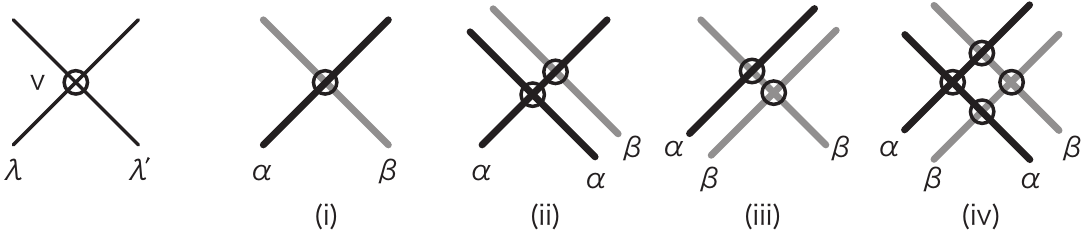}
\caption{}
\label{virtual}
\end{figure}
\end{center}

In the case (iv), the pair of virtual crossings between $\alpha$ and $\beta$ 
does not contribute to the intersection number; 
in fact, they have opposite signs. 
On the other hand, 
each case of (i)--(iii) contains a single virtual crossing. 
It follows that the number of virtual crossings of $D$ 
in the cases (i)--(iii) is greater than or equal to $|n|$. 
\end{proof}

\begin{example}\label{ex510}
(i) 
Let $K_n$ $(n\geq1)$ be the virtual knot presented 
by the Gauss diagram and 
the virtual diagram  
as shown in Figure \ref{vc1}. 
We see that 
${\rm c}(K_n)=n+3$ and ${\rm vc}(K_n)=n+2$ can be 
detected by $I_{K_n}(t)$ 
but not by $W_{K_n}(t)$, $I\!I_{K_n}(t)$, 
and $I\!I\!I_{K_n}(t)$. 
In fact, we have 
\begin{eqnarray*}
W_{K_n}(t)&=&t^{n+1}-t^2-nt+n+1-t^{-1},\\
I_{K_n}(t)&=& -t^{n+2}+(n+1)t^{n+1}-t^n+(n+1)t-2\sum_{i=1}^{n}t^{i}, \\
I\!I_{K_n}(t)&=& n(t^{n+1}+t^{-n-1})-(n^2+n+2)(t+t^{-1})+2(n^2+2n+2)\\
& & -2\sum_{i=1}^{n}(t^{i}+t^{-i}), \mbox{ and}\\
I\!I\!I_{K_n}(t)&\equiv&n(t^2+t^{-2})-2(t+t^{-1})+4 
-\sum_{i=1}^{n}(t^i+t^{-i})
\quad \mbox{(mod~$\overline{W}_{K_n}(t)$)}, 
\end{eqnarray*}
where  
$\overline{W}_{K_n}(t)=(t^{n+1}+t^{-n-1})-(t^2+t^{-2})-(n+1)(t+t^{-1})+2n+2$.

\begin{figure}[htb]
\begin{center}
\includegraphics[bb = 0 0 326.48 103.93]{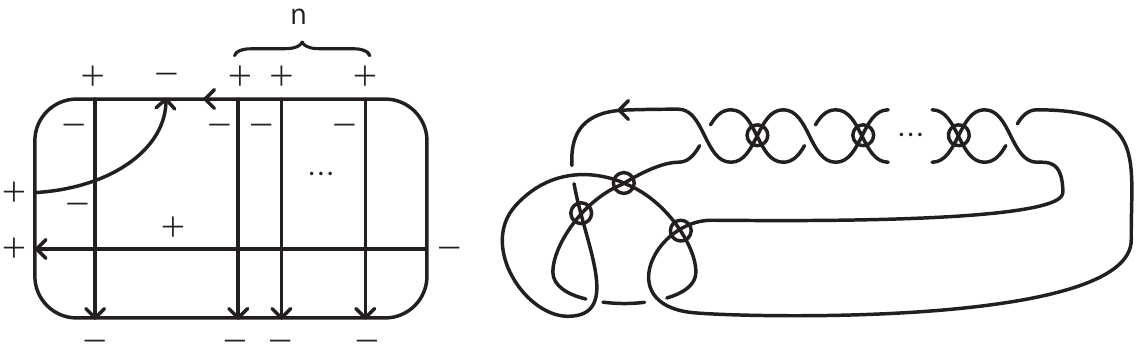}
\caption{}
\label{vc1}
\end{center}
\end{figure}

(ii) 
Let $K_n'$ $(n\geq1)$ be the virtual knot presented 
by the Gauss diagram and 
the virtual diagram 
as shown in Figure \ref{vc2}. 
We see that 
${\rm c}(K_n')=n+3$ and ${\rm vc}(K_n')=n+2$ can be detected by 
$I\!I_{K_n'}(t)$ 
but not by $W_{K_n'}(t)$, $I_{K_n'}(t)$, 
and $I\!I\!I_{K_n'}(t)$. 
In fact, we have 
\begin{eqnarray*}
W_{K_n'}(t)&=&t^{n+1}-nt+n-1-t^{-1}+t^{-2},\\
I_{K_n'}(t)&=& (n-1)t^{n+1}+(2n+1)t-n-1-(n-2)t^{-1}+(n-1)t^{-2}-2\sum_{i=1}^{n}t^{i}, \\
I\!I_{K_n'}(t)&=& (t^{n+2}+t^{-n-2})+(n-2)(t^{n+1}+t^{-n-1})\\
& & +(n-1)(t^{2}+t^{-2})-n(n+1)(t+t^{-1})+2(n^2+n+2)\\
& & -\sum_{i=1}^{n}(t^{i}+t^{-i})-\sum_{i=1}^{n}(t^{i-1}+t^{-i+1}), \mbox{ and}\\
I\!I\!I_{K_n'}(t)&\equiv& 
-n(t+t^{-1})+4n-\sum_{i=1}^{n}(t^{i-1}+t^{-i+1})
\quad \mbox{(mod~$\overline{W}_{K_n'}(t)$)}, 
\end{eqnarray*}
where 
$\overline{W}_{K_n'}(t)=(t^{n+1}+t^{-n-1})+(t^2+t^{-2})-(n-1)(t+t^{-1})+2n-2$.

\begin{figure}[htb]
\begin{center}
\includegraphics[bb = 0 0 326.48 103.93]{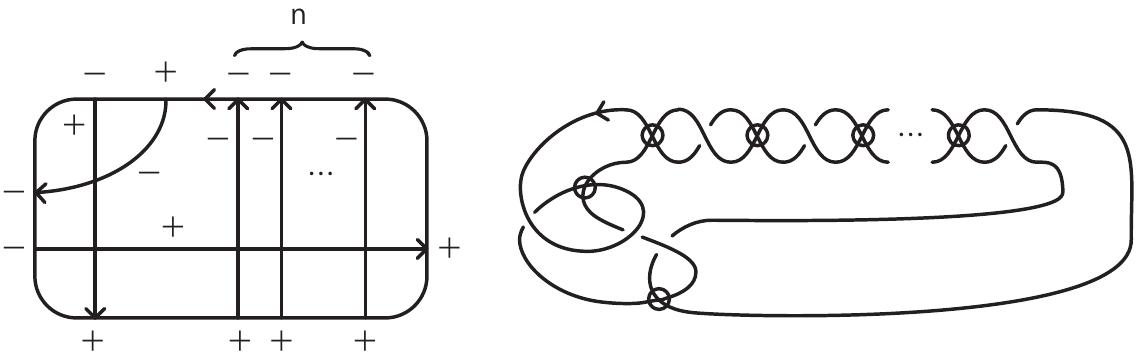}
\caption{}
\label{vc2}
\end{center}
\end{figure}

\end{example}



\appendix
\section{Table of $W_K(t)$, $I_K(t)$, and $I\!I_K(t)$.}\label{appA}
Table~\ref{table5} shows $W_K(t)$, $I_K(t)$, and $I\!I_K(t)$ 
of a virtual knot $K$ up to crossing number four 
according to Green's table~\cite{Gr} with a choice of orientations.
We use the following notations; 
$$\begin{array}{lcl}
\{n\}(a_0+a_1+\cdots+a_m) & =&
a_0t^{n}+a_1t^{n+1}+\cdots+a_mt^{n+m}
\ \ \mbox{ and }\\
{[}b_0+b_1+b_2+\cdots &=& 
b_0 + b_1(t+t^{-1}) + b_2(t^2+t^{-2})+\cdots,
\end{array}$$
where $m\geq 1$ and $a_0\neq 0$.

\begin{longtable}{|l|l|l|l|} \hline\label{table5}
  & $W_K(t)$                  & $I_K(t)$                       & $I\!I_K(t)$  \\ \hline \hline
 $2.1$  & $[2-1$                   &  $[2-1$                       &$[4-2$  \\ \hline
 $3.1$  & $\{-1\}(-1+1+1-1)$    &  $\{0\}(-1+2-1)$           & $[4-2$   \\ \hline
 $3.2$  & $[2-1$                     & $[2-1$                        &  $[4-2$    \\ \hline
 $3.3$  & $\{-1\}(-2+3+0-1)$    & $\{-1\}(-2+2+2-2)$       & $[10-4-1$   \\ \hline  
 $3.4$  & $\{0\}(1-2+1)$          &    $0$                          &  $[6-4+1$  \\ \hline
 $3.5$  & $[2+0-1$                 & $[0+2-2$                      & $[0+4-4$   \\ \hline  
 $3.6$  & $ 0$                        &   $ 0$                           &  $0$   \\ \hline  
 $3.7$  & $[2+0-1$                 &  $[4-2$                         & $[8-4$  \\ \hline  
 $4.1$  &  $[4-2$                    & $[4-2$                         &  $[20-12+2$   \\ \hline  
 $4.2$  & $0$                         &  $[4-2$                        &  $[-4+4-2$  \\ \hline  
 $4.3$  &  $[4-2$                    & $\{-1\}(-3+7-5+1)$        &  $[14-8+1$  \\ \hline  
 $4.4$  & $[2-1$                     & $\{0\}(-1+2-1)$             & $[10-6+1$ \\ \hline
 $4.5$  & $[2-1$                     &  $\{-1\}(-2+5-4+1)$       &  $[-2+2-1$   \\ \hline  
 $4.6$  & $0$                         & $\{-1\}(-1+1+1-1)$         &  $[2+0-1$  \\ \hline
 $4.7$  & $[4-2$                     & $[10-6+1$                     &  $[8-4$  \\ \hline  
 $4.8$  & $0$                         & $[-2+2-1$                     & $[8-4$ \\ \hline  
 $4.9$  & $[2-1$                     &  $[-2+1$                       &  $[2-2+1$   \\ \hline  
 $4.10$ & $\{-1\}(-1+1+1-1)$    & $\{-1\}(-1+0+3-2)$        &  $[0+2-2$  \\ \hline
 $4.11$ & $\{-1\}(-2+3+0-1)$    & $\{-1\}(-2+2+2-2)$        &  $[10-4-1$  \\ \hline  
 $4.12$ & $0$                         & $0$                              &  $0$ \\ \hline  
 $4.13$ & $0$                         &     $0$                          & $0$   \\ \hline  
 $4.14$ & $[0-1+1$                  & $[-2+1$                       &  $[6-3-1+1$  \\ \hline  
 $4.15$ & $\{-1\}(-2+3+0-1)$    & $\{-2\}(1-4+2+4-3)$       &  $[4+0-2$  \\ \hline  
 $4.16$ & $0$                          & $\{-1\}(1-3+3-1)$          & $0$ \\ \hline  
 $4.17$ & $\{-1\}(-1+1+1-1)$     & $\{0\}(-1+2-1)$             &  $[4-2$  \\ \hline
 $4.18$ & $[2-1$                      & $[2-1$                          &  $[4-2$  \\ \hline  
 $4.19$ & $\{-1\}(-1+1+1-1)$     & $\{0\}(-1+2-1)$              &   $[4-2$ \\ \hline
 $4.20$ & $\{0\}(1-2+1)$           & $0$                               &  $0$   \\ \hline 
 $4.21$ & $[-2+0+1$                 & $\{-2\}(-2+3-1+1-1)$      &  $[8-1-4+1$  \\ \hline  
 $4.22$ & $\{-1\}(-1+1+0+1-1)$  & $0$                              &  $[4-1-2+1$  \\ \hline  
 $4.23$ & $\{-1\}(-1+1+1-1)$     & $\{-1\}(-1+0+3-2)$         & $[0+2-2$ \\ \hline
 $4.24$ & $\{-2\}(1+0-1-1+0+1)$ & $\{-2\}(-1+1-2+3+1-2)$  & $[0+3-2-1$ \\ \hline  
 $4.25$ & $[4-2$                       &$\{-1\}(-3+7-5+1)$          & $[14-8+1$ \\ \hline
 $4.26$ & $\{-1\}(-1+0+2+0-1)$   & $\{-1\}(1-3+3-1)$          & $[10-5-1+1$ \\ \hline
 $4.27$ & $[2-1$                       & $\{-1\}(-2+3+0-1)$         & $[6-2-1$ \\ \hline
 $4.28$ & $\{-1\}(2-2-1+0+1)$     & $\{0\}(-3+4+1-2)$         & $[10-5+1-1$ \\ \hline 
 $4.29$ & $\{-1\}(-2+3+0-1)$      & $\{-1\}(-2+2+2-2)$       & $[10-4-1$ \\ \hline  
 $4.30$ & $[2-1$                       & $\{1\}(-1+2-1)$            & $[10-6+1$ \\ \hline
 $4.31$ & $0$                           & $0$                             & $0$ \\ \hline  
 $4.32$ & $\{-1\}(-1+1+1-1)$      & $[2-1$                        & $[4-2$ \\ \hline
 $4.33$ & $[2-1$                       & $[2-1$                        & $[10-6+1$ \\ \hline 
 $4.34$ & $\{0\}(1-2+1)$            & $0$                             & $[6-4+1$ \\ \hline
 $4.35$ & $\{-1\}(-1+1+1-1)$      & $[2-1$                        & $[4-2$ \\ \hline  
 $4.36$ & $[-2+0+1$                  & $[2+0-1$                     & $[4+0-2$ \\ \hline
 $4.37$ & $[4-1-1$                    & $[6-1-2$                     & $[12-2-4$ \\ \hline  
 $4.38$ & $\{0\}(1-2+1)$            & $\{1\}(2-4+2)$              & $[4-4+2$ \\ \hline
 $4.39$ & $\{-1\}(-1+1+1-1)$      & $\{1\}(-2+4-2)$            & $[4-3+2-1$ \\ \hline  
 $4.40$ & $[2-1$                       & $[2-1$                        & $[4-2$ \\ \hline
 $4.41$ & $0$                           & $0$                            & $0$ \\ \hline  
 $4.42$ & $\{0\}(1-1-1+1)$         & $\{1\}(-1+2-1)$            & $[4-2$ \\ \hline
 $4.43$ & $[4-2$                       & $[8-4$                        & $[16-8$ \\ \hline 
 $4.44$ & $[2-1$                       & $\{0\}(1-2+1)$             & $[2-2+1$ \\ \hline
 $4.45$ & $\{-1\}(-2+2+1+0-1)$   & $\{0\}(-2+3+0-1)$        & $[12-6$ \\ \hline
 $4.46$ & $0$                           & $\{-1\}(-1+1+1-1)$       & $[2+0-1$ \\ \hline
 $4.47$ & $\{-1\}(1+0-2+0+1)$    & $\{-1\}(1-4+4+0-1)$      & $[8-4$ \\ \hline  
 $4.48$ & $[4-1-1$                    & $[2+1-2$                     & $[14-3-5+1$ \\ \hline
 $4.49$ & $\{0\}(1-2+1)$            & $0$                             & $[6-4+1$ \\ \hline  
 $4.50$ & $\{-1\}(-1+1+1-1)$      & $\{-1\}(-1+0+3-2)$       & $[0+2-2$ \\ \hline
 $4.51$ & $0$                           & $0$                             & $0$ \\ \hline  
 $4.52$ & $[2-1$                       & $[-2+1$                       & $[2-2+1$ \\ \hline
 $4.53$ & $[4-2$                       & $[6-4+1$                     & $[12-8+2$ \\ \hline
 $4.54$ & $[2-1$                       & $\{-1\}(-2+3+0-1)$       & $[6-2-1$ \\ \hline
 $4.55$ & $0$                           & $[-4+2$                       & $[4-4+2$ \\ \hline  
 $4.56$ & $0$                           & $\{0\}(2-4+2)$              & $[-8+4$ \\ \hline
 $4.57$ & $\{-1\}(-1+1+1-1)$      & $\{0\}(-1+2-1)$            & $[4-2$ \\ \hline  
 $4.58$ & $0$                           & $\{-1\}(-1+1+1-1)$       & $[8-4$ \\ \hline
 $4.59$ & $0$                           & $\{0\}(-1+1+1-1)$         & $[8-4$ \\ \hline  
 $4.60$ & $[2-1$                       & $[2-1$                        & $[4-2$ \\ \hline
 $4.61$ & $[2-1$                       & $[2-1$                        & $[4-2$ \\ \hline  
 $4.62$ & $\{-2\}(-1-1+3+0+0-1)$& $\{-1\}(-3+4+0+0-1)$   & $[12-4-2$ \\ \hline
 $4.63$ & $\{-1\}(-2+3+0-1)$      & $\{-2\}(1-4+4+0-1)$     & $[8-4$ \\ \hline  
 $4.64$ & $[0-1+1$                    & $[2-1$                       & $[4-2$ \\ \hline
 $4.65$ & $[2+0-1$                    & $[2+0-1$                    & $[4+0-2$ \\ \hline
 $4.66$ & $\{-2\}(1+0-1-1+0+1)$ & $\{-2\}(1-3+2+1-1)$     & $[8-3-2+1$ \\ \hline
 $4.67$ & $\{-1\}(1-1-1+1)$        & $[2-1$                       & $[4-2$ \\ \hline  
 $4.68$ & $0$                            & $0$                           & $0$ \\ \hline
 $4.69$ & $[2-1$                        & $[2-1$                       & $[4-2$ \\ \hline  
 $4.70$ & $\{-1\}(-1+1+1-1)$       & $\{0\}(-1+2-1)$           & $[4-2$ \\ \hline
 $4.71$ & $0$                            & $\{-2\}(-1+2-1-1+2-1)$& $[8-4$ \\ \hline  
 $4.72$ &  $0$                           & $\{-1\}(-1+1+1-1)$      & $[8-4$ \\ \hline
 $4.73$ & $[4-2$                        & $[8-4$                       & $[16-8$ \\ \hline  
 $4.74$ & $[2-1$                        & $\{0\}(1-2+1)$            & $[2-2+1$ \\ \hline
 $4.75$ & $0$                            & $[2+0-1$                    & $[4+0-2$ \\ \hline  
 $4.76$ & $0$                            & $[4-2$                       & $[-4+4-2$ \\ \hline
 $4.77$ & $0$                            & $\{0\}(-2+4-2)$           & $[8-4$ \\ \hline  
 $4.78$ & $\{-2\}(-1-1+3+0+0-1)$& $\{-2\}(-2+1+0+2+2-3)$& $[4+2-2-2$ \\ \hline  
 $4.79$ & $\{0\}(1-1-1+1)$          & $\{1\}(1-2+1)$             & $[4+0-4+2$ \\ \hline
 $4.80$ & $\{-1\}(-3+4+0+0-1)$    & $\{-1\}(-3+2+2+2-3)$   & $[20-10+2-2$ \\ \hline  
 $4.81$ & $\{0\}(2-3+0+1)$            & $\{1\}(1-2+1)$            & $[16-8-2+2$  \\ \hline  
 $4.82$ & $[4-1-1$                     & $[6-1-2$                    & $[12-2-4$ \\ \hline
 $4.83$ & $\{-2\}(-1+0+2-1+1-1)$ & $\{-2\}(-1+1+0-1+3-2)$& $[4-1+0-1$ \\ \hline  
 $4.84$ & $[0+1-1$                      & $[2-1$                       & $[4-2$ \\ \hline
 $4.85$ & $[2+0-1$                     & $\{-2\}(-1+1+0+0+1-1)$ & $[8-1-4+1$ \\ \hline
 $4.86$ & $[2+0-1$                     & $[4-2$                        & $[8-4$ \\ \hline  
 $4.87$ & $\{-2\}(-2+0+4-1+0-1)$ & $\{-2\}(-4+2+4-1+2-3)$ & $[12+0-4-2$ \\ \hline
 $4.88$ & $\{1\}(1-2+1)$             & $\{1\}(-1+2-1)$             & $[4-2$  \\ \hline  
 $4.89$ & $[4+0-2$                     & $[4+2-4$                     & $[16+2-12+2$ \\ \hline
 $4.90$ & $0$                            & $\{1\}(-2+4-2)$            & $[8-4$ \\ \hline  
 $4.91$ & $[4-1+0-1$               & $[4-1+2-3$                  & $[8-2+4-6$ \\ \hline
 $4.92$ & $[2+0+0-1$               & $[-2+2+2-3$                & $[-4+4+4-6$ \\ \hline
 $4.93$ & $\{-1\}(-1+2+0-2+1)$  & $\{-1\}(-1+2+0-2+1)$   & $[8-2-4+2$ \\ \hline
 $4.94$ & $[2-1$                      & $[2-1$                        & $[4-2$  \\ \hline    
 $4.95$ & $[2+0+0-1$               & $[2+0+0-1$                  & $[4+0+0-2$ \\ \hline
 $4.96$ & $[2+0-1$                  & $[2+0-1$                     & $[4+0-2$ \\ \hline  
 $4.97$ & $\{-2\}(1-1+0+0-1+1)$ & $0$                             & $[4-1-2+1$ \\ \hline
 $4.98$ & $0$                           & $[0-1+2-1$                 & $[8-4$ \\ \hline
 $4.99$ & $0$                           & $[8-4$                         & $[16-8$  \\ \hline  
 $4.100$ & $[4-2$                     & $[8-4$                         & $[16-8$ \\ \hline
 $4.101$ & $[2+0+0-1$               & $[2+0+0-1$                   & $[4+0+0-2$ \\ \hline  
 $4.102$ & $[0-1+0+1$               & $[4-1-2+1$                   & $[8-2-4+2$ \\ \hline
 $4.103$ & $\{-2\}(-2+1+2+0+0-1)$& $\{-2\}(-2+1+2+0+0-1)$& $[8+0-4$ \\ \hline
 $4.104$ & $[-2+0+0+1$              & $[6-2-2+1$                  & $[12-4-4+2$\\ \hline
 $4.105$ & $0$                           & $[-8+4$                      & $[-16+8$ \\ \hline
 $4.106$ & $[2+0-1$                    & $[0+2-2$                    & $[0+4-4$ \\ \hline
 $4.107$ & $0$                           & $[-4+2$                      & $[0+2-4+2$\\ \hline
 $4.108$ & $0$                           & $0$                            & $0$ \\ \hline
\caption{}
\end{longtable}

\section{Table of $\overline{W}_K(t)$, $f_{00}(D;t)$, 
$f_{11}(D;t)$, and $I\!I\!I_K(t)$.}\label{appB}
Table~\ref{table6} shows $\overline{W}_K(t)$, $f_{00}(D;t)$, 
$f_{11}(D;t)$, and $I\!I\!I_K(t)$
of a virtual knot $K$ up to crossing number four 
with a choice of orientations.
We remark that these polynomials are all reciprocal.

\begin{longtable}{|l|l|l|l|l|} \hline\label{table6}
    & $\overline{W}_K(t)$   & $f_{00}(D;t)$     & $f_{11}(D;t)$          & $I\!I\!I_K(t)$  \\ \hline \hline
 $2.1$  & $[4-2$                  &  $[-2+1$         &$[-2+1$                 & $[-2+1$ \\ \hline
 $3.1$  & $[2+0-1$                 &  $[2-2+1$      & $0$                       &  $[4-2$       \\ \hline
 $3.2$  & $[4-2$                   &  $[2-1$           & $[-2+1$                 &  $[2-1$       \\ \hline
 $3.3$  & $[6-2-1$                &  $[-4+1+1$      & $[-4+1+1$             &  $[2-1$       \\ \hline
 $3.4$  & $[2-2+1$                 &  $[2-1$          & $[2-1$                 &  $[2-1$       \\ \hline
 $3.5$  & $[4+0-2$                &  $[-6+2+1$       & $[-6+2+1$            &  $[-6+2+1$    \\ \hline
 $3.6$  & $0$                        &  $0$              & $0$                       &  $0$    \\ \hline
 $3.7$  & $[4+0-2$                &  $[2-2+1$       &  $[2-2+1$              &   $[2-2+1$  \\ \hline
 $4.1$  & $[8-4$                   &  $[-8+2+2$      &$[-4+2$                 &  $[0-2+2$ \\\hline
 $4.2$  & $0$                       &  $[0+2-2$       & $[-4+2$                &   $[0+2-2$ \\\hline
 $4.3$  &  $[8-4$                  & $[-8+4$         &  $[-10+4+1$           & $0$  \\ \hline  
 $4.4$  & $[4-2$                   & $[2-1$          & $[0-1+1$                &  $[2-1$   \\ \hline
 $4.5$  & $[4-2$                   &  $[-4+3-1$     &  $[-6+3$                &$[0+1-1$   \\ \hline    
 $4.6$  & $0$                       & $[2+0-1$        &  $0$                      & $[2+0-1$ \\ \hline
 $4.7$  & $[8-4$                   & $[-12+6$        &  $[-12+6$              &  $[-4+2$ \\ \hline  
 $4.8$  & $0$                       & $[4-2$           & $[4-2$                  & $[4-2$  \\ \hline  
 $4.9$  & $[4-2$                   &  $[-10+5$      &  $[-4+1+1$            & $[-2+1$  \\ \hline  
 $4.10$ &$[2+0-1$                & $0$               &  $[-4+2$               & $0$  \\ \hline
 $4.11$ & $[6-2-1$               & $[0-1+1$       &  $[-2+1$                & $[6-3$ \\ \hline  
 $4.12$ & $0$                      & $0$              &  $0$                      &  $0$ \\ \hline 
 $4.13$ & $0$                      &     $[-4+2$    & $[4-2$                   &  $[-4+2$   \\ \hline  
 $4.14$ & $[0-2+2$               & $[2-1$         &  $[4-2-1+1$            & $[2-1$ \\ \hline  
 $4.15$ & $[6-2-1$               & $[-8+3+1$    & $[-12+5+1$             & $[-2+1$ \\ \hline  
 $4.16$ & $0$                             &  $0$                                     & $0$                                 & $0$ \\ \hline  
 $4.17$ & $[2+0-1$            & $0$                                     &  $[4-2$                 & $0$ \\ \hline
 $4.18$ & $[4-2$            & $[-2+1$                         &  $[-2+1$                      & $[-2+1$    \\ \hline  
 $4.19$ & $[2+0-1$             &  $0$                                   &   $[4-2$                &  $0$ \\ \hline
 $4.20$ & $[2-2+1$& $[-2+1$                        & $[2-1$                     & $[-2+1$    \\ \hline 
 $4.21$ & $[-4+0+2$            &  $[-2+0+1$                  &  $[2-1-1+1$ &  $[-2+0+1$ \\ \hline  
 $4.22$ & $[2-1+1-1$ & $[2+0-2+1$ &  $[2-1$& $[4-1-1$\\ \hline  
 $4.23$ & $[2+0-1$        & $[-4+2$                         &  $0$                          & $[-4+2$   \\ \hline
 $4.24$ & $[-2-1+1+1$ & $[-2+1-1+1$ & $[-2+0+1$ & $[0+2-2$\\\hline  
 $4.25$ & $[8-4$             & $[-10+4+1$        & $[-8+4$ & $[-2+0+1$  \\ \hline
 $4.26$ & $[0+1+0-1$ & $[6-3-1+1$ & $[4-2$ & $[6-2-1$\\\hline
 $4.27$ & $[4-2$             &$[-2+1$                   & $[0+1-1$ &$[-2+1$\\ \hline
 $4.28$ & $[-4+1+0+1$   &$[2-2+0+1$  & $[0-1+1$  &$[6-3$  \\ \hline 
 $4.29$ & $[6-2-1$   & $[-6+1+2$      & $[-8+3+1$ & $[6-3$    \\ \hline  
 $4.30$ & $[4-2$            & $[0-1+1$      & $[2-1$      & $[0-1+1$ \\ \hline
 $4.31$ & $0$                        & $[4-2$                         & $[-4+2$        & $[4-2$ \\ \hline  
 $4.32$ & $[2+0-1$     & $[4-2$                    &  $0$    & $[4-2$  \\ \hline
 $4.33$ & $[4-2$           & $[-2+1$                   & $[4-3+1$        & $[-2+1$  \\ \hline 
 $4.34$ & $[2-2+1$    &  $[4-3+1$  & $[2-1$   &  $[2-1$  \\ \hline
 $4.35$ & $[2+0-1$        &  $0$                 & $[4-2$    &  $0$   \\ \hline  
 $4.36$ & $[-4+0+2$                & $[-2+0+1$                  & $[-2+0+1$         & $[-2+0+1$  \\ \hline
 $4.37$ & $[8-2-2$   & $[-10+3+2$  & $[-10+3+2$& $[-2+1$ \\ \hline  
 $4.38$ &  $[2-2+1$                    & $[-2+1$                     & $[2-1$    & $[-2+1$     \\ \hline
 $4.39$ & $[2-1+1-1$   & $[0+0-1+1$                & $[0-1+1$   &$[2-1$  \\ \hline  
 $4.40$ & $[4-2$                 & $[2-1$                    & $[2-1$   & $[2-1$   \\ \hline
 $4.41$ & $0$                              & $0$                                 & $0$                 & $0$  \\ \hline  
 $4.42$ & $[2-1-1+1$              &  $0$                 & $[4-2$            &  $0$     \\ \hline
 $4.43$ & $[8-4$             & $[-8+4$                 & $[-8+4$ &  $0$    \\ \hline 
 $4.44$ & $[4-2$               &  $[-4+1+1$                     & $[-2+1$  &  $[0-1+1$ \\ \hline
 $4.45$ & $[4-1+0-1$    & $[2-2+0+1$    & $[2-2+0+1$& $[6-1-2$\\\hline
 $4.46$ & $0$                            & $0$             &  $[2+0-1$    & $0$     \\ \hline
 $4.47$ & $[0-1+0+1$               & $[4-2$            & $[4-2$    & $[4-2$\\ \hline  
 $4.48$ & $[8-2-2$ & $[-6+2+0+1$  & $[-12+3+3$ & $[-6+2+0+1$\\\hline
 $4.49$ & $[2-2+1$   &  $[0+1-1$    & $[2-1$ &  $[2-1$ \\ \hline  
 $4.50$ & $[2+0-1$        &  $0$         & $[-4+2$     &  $0$ \\ \hline
 $4.51$ & $0$                              & $[-4+2$                               & $[4-2$      & $[-4+2$       \\ \hline  
 $4.52$ & $[4-2$                 & $[-2+1$                     &  $[4-3+1$      & $[-2+1$ \\ \hline
 $4.53$ & $[8-4$           & $[-10+4+1$   & $[-10+4+1$   & $[-2+0+1$ \\ \hline
 $4.54$ & $[4-2$             &  $[0+1-1$               &$[-2+1$    &  $[0+1-1$ \\ \hline
 $4.55$ & $0$                            &$[2-2+1$     & $[2-2+1$   &$[2-2+1$ \\ \hline  
 $4.56$ & $0$                           & $[-4+2$                       & $[-4+2$      & $[-4+2$    \\ \hline
 $4.57$ & $[2+0-1$        & $[-2+0+1$                    & $[2-2+1$ &  $0$  \\ \hline  
 $4.58$ & $0$                           & $[4-2$              & $[4-2$       & $[4-2$ \\ \hline
 $4.59$ & $0$                           & $[4-2$                 & $[4-2$ & $[4-2$  \\ \hline  
 $4.60$ & $[4-2$              & $[2-1$                   & $[2-1$   & $[2-1$   \\ \hline
 $4.61$ & $[4-2$                & $[-6+3$                    & $[-6+3$    & $[-2+1$   \\ \hline  
 $4.62$ & $[6-1-1-1$  & $[-2+0+0+1$     & $[2-2+0+1$& $[4-1-1$   \\ \hline
 $4.63$ & $[6-2-1$        & $[0-1+1$       &  $[-4+1+1$ & $[6-3$ \\ \hline  
 $4.64$ &  $[0-2+2$     & $[2-1$                    & $[2-1$  & $[2-1$   \\ \hline
 $4.65$ & $[4+0-2$            & $[-2+0+1$                  &$[-2+0+1$     & $[-2+0+1$         \\ \hline
 $4.66$ & $[-2-1+1+1$ & $[4-1-2+1$ & $[4-2$ & $[6+0-3$ \\ \hline
 $4.67$ & $[-2+0+1$           & $[4-2$                    & $0$ & $[4-2$   \\ \hline   
 $4.68$ & $0$                            & $0$                                   & $0$         & $0$ \\ \hline
 $4.69$ & $[4-2$              & $[-6+3$                     & $[-6+3$& $[-2+1$\\ \hline  
 $4.70$ & $[2+0-1$        & $[2-2+1$                        & $[-2+0+1$   & $[4-2$ \\ \hline
 $4.71$ & $0$                           & $[4-2$ & $[4-2$      & $[4-2$ \\ \hline  
 $4.72$ &  $0$                          & $[4-2$                    & $[4-2$     & $[4-2$    \\ \hline
 $4.73$ & $[8-4$          & $[-8+4$                    & $[-8+4$ & $0$   \\ \hline  
 $4.74$ & $[4-2$             &  $[-4+1+1$                 & $[-2+1$ &  $[0-1+1$\\ \hline
 $4.75$ & $0$                           & $[2+0-1$                 & $[2+0-1$  & $[2+0-1$  \\ \hline  
 $4.76$ & $0$                          & $[-2+2-1$   &  $[-2+2-1$   & $[-2+2-1$    \\ \hline
 $4.77$ & $0$                           & $[4-2$                      & $[4-2$     & $[4-2$  \\ \hline  
 $4.78$ &$[6-1-1-1$&$[-12+4+1+1$&$[-8+2+1+1$&$[-6+3$\\ \hline  
 $4.79$ & $[2-1-1+1$   &  $[-2+2-1$ & $[2+0-1$  &  $[-2+2-1$  \\ \hline
 $4.80$ & $[8-3+0-1$     & $[-6+1+1+1$   & $[-6+1+1+1$ & $[2-2+1$ \\ \hline  
 $4.81$ & $[4-3+0+1$        &  $[4-1-1$ &  $[4-1-1$   &  $[4-1-1$ \\ \hline  
 $4.82$ &$[8-2-2$& $[-8+3+1$& $[-12+3+3$  & $[-8+3+1$ \\ \hline
 $4.83$ & $[4-1+0-1$        & $[-2+0+0+1$    & $[-2+1$   & $[2-1$ \\ \hline  
 $4.84$ & $[0+2-2$    & $[4-1-1$            &$[0-1+1$  & $[4-1-1$\\ \hline
 $4.85$ & $[4+0-2$                 & $[0-1+0+1$         &  $0$       & $[0-1+0+1$\\ \hline
 $4.86$ &$[4+0-2$              &$[6-2-1$      & $[2-2+1$  &$[6-2-1$ \\ \hline  
 $4.87$ & $[8-1-2-1$  & $[-10+2+2+1$ & $[-10+2+2+1$ & $[-2+1$\\ \hline
 $4.88$ & $[0+1-2+1$           & $[2-1$         & $[2-1$  & $[2-1$   \\ \hline  
 $4.89$ & $[8+0-4$     &    $[-8+1+2+1$    &  $[-8+1+2+1$& $[0+1-2+1$ \\ \hline
 $4.90$ & $0$                               & $[4-2$                    & $[4-2$    & $[4-2$    \\ \hline  
 $4.91$ & $[8-2+0-2$    & $[-12+3+2+1$ &  $[-12+3+2+1$ & $[-12+3+2+1$  \\ \hline
 $4.92$ & $[4+0+0-2$             & $[-10+2+2+1$ &  $[-10+2+2+1$ & $[-10+2+2+1$ \\ \hline
 $4.93$ & $[4-1-2+1$    &  $0$                 &  $0$  &  $0$    \\ \hline
 $4.94$ & $[4-2$              &$[-6+3$                         & $[-6+3$   &$[-2+1$        \\ \hline   
 $4.95$ & $[4+0+0-2$            &$[-2+0+0+1$                       & $[-2+0+0+1$  &$[-2+0+0+1$   \\ \hline
 $4.96$ &$[4+0-2$             &  $0$                   & $[-4+2$   &  $0$  \\ \hline  
 $4.97$ & $[0-1+0+1$       &  $[2+0-1$                &$[2-1-1+1$   &  $[2+0-1$ \\ \hline
 $4.98$ & $0$                             & $[4-2$ & $[4-2$   & $[4-2$\\ \hline
 $4.99$ & $0$                             & $[8-4$                     & $[8-4$  & $[8-4$  \\ \hline  
 $4.100$ & $[8-4$           & $[-8+4$                   & $[-8+4$  & $0$  \\ \hline
 $4.101$ & $[4+0+0-2$             &$[-2+0+0+1$                    & $[-2+0+0+1$  &$[-2+0+0+1$   \\ \hline  
 $4.102$  & $[0-2+0+2$    & $[4-1-2+1$ & $[4-1-2+1$ & $[4-1-2+1$ \\ \hline
 $4.103$  & $[4+1-2-1$    & $[0-1+0+1$   & $[0-1+0+1$& $[4+0-2$ \\ \hline
 $4.104$  & $[-4+0+0+2$    & $[6-2-2+1$ & $[6-2-2+1$  & $[6-2-2+1$ \\ \hline
 $4.105$ & $0$                           & $[-8+4$                    & $[-8+4$ & $[-8+4$       \\ \hline
 $4.106$ & $[4+0-2$    & $[-6+2+1$   &  $[-2+2-1$  & $[-6+2+1$  \\ \hline
 $4.107$ & $0$                     & $[0+2-2$          & $[0+0-2+2$   & $[0+2-2$\\ \hline
 $4.108$ & $0$                       & $0$                                & $0$          & $0$ \\ \hline
\caption{}
\end{longtable}


\begin{thebibliography}{99}


\bibitem{CG} Z. Cheng and H.  Gao, 
{\it A polynomial invariant of virtual links}, 
J. Knot Theory Ramifications, {\bf 22} (2013), no. 12, 1341002, 33 pp.

\bibitem{CKS} J.S. Carter, S. Kamada, and M.  Saito, 
{\it Stable equivalence of knots on surfaces and virtual knot cobordisms}, 
 Knots 2000 Korea, Vol. 1 (Yongpyong).
J. Knot Theory Ramifications {\bf 11} (2002), no. 3, 311--322. 


\bibitem{FK} L.C. Folwaczny and L.H. Kauffman,
{\it A linking number definition of the affine index polynomial and applications},
J. Knot Theory Ramifications {\bf 22} (2013), no. 12, 1341004,  30 pp. 



\bibitem{Gr}
J.~Green, http://www.math.toronto.edu/drorbn/Students/GreenJ


\bibitem{HNNS2}
R.~Higa, T. Nakamura, Y. Nakanishi, and S. Satoh, 
{\it The intersection polynomials of a virtual knot {\rm II}:
Connected sums},
preprint (2021).

\bibitem{HNNS3}
R.~Higa, T. Nakamura, Y. Nakanishi, and S. Satoh, 
{\it The intersection polynomials of a virtual knot {\rm III}:
Characterizations},
preprint (2021).

\bibitem{HNNS4}
R.~Higa, T. Nakamura, Y. Nakanishi, and S. Satoh, 
{\it The intersection polynomials of a virtual knot {\rm IV}:
Crossing changes},
preprint (2021).



\bibitem{KK} N. Kamada and S. Kamada, 
{\it Abstract link diagrams and virtual knots}, 
 J. Knot Theory Ramifications {\bf 9} (2000), no. 1, 93--106. 


\bibitem{K1} L.H. Kauffman, 
{\it Virtual knot theory}, 
  European J. Combin. {\bf 20} (1999), no. 7, 663--690.


\bibitem{K3} L.H. Kauffman, 
{\it An affine index polynomial invariant of virtual knots}, 
 J. Knot Theory Ramifications {\bf 22} (2013), no. 4, 1340007, 30 pp.

\bibitem{KS} T. Kishino and S. Satoh,
{\it A note on non-classical virtual knots},
J. Knot Theory Ramifications {\bf 13} (2004), no. 7, 845--856. 


\bibitem{Kurp} G. Kuperberg, 
{\it What is a virtual link?}, 
 Algebr. Geom. Topol. {\bf 3} (2003), 587--591. 



\bibitem{Mel} 
B. Mellor, 
{\it Alexander and writhe polynomials for virtual knots}, 
J. Knot Theory Ramifications {\bf 25} (2016), no. 8, 1650050, 30 pp.






\bibitem{ST} S. Satoh and K. Taniguchi, 
{\it The writhes of a virtual knot},
Fund. Math. {\bf 225} (2014), 327--342.

\bibitem{Saw} J. Sawollek,
{\it On Alexander-Conway polynomials for virtual knots and links},
available at arXiv:math/9912173. 

\bibitem{SW} D. Silver and S. Williams, 
{\it Polynomial invariants of virtual links}, 
J. Knot Theory Ramifications {\bf 12} (2003), 987--1000. 





\end{thebibliography}
\end{document}